\newtheorem{theorem}{Theorem}[section]
\newtheorem{lem}[theorem]{Lemma}
\newtheorem{prop}[theorem]{Proposition}
\newtheorem{cor}[theorem]{Corollary}
\newtheorem{example}[theorem]{Example}
\newtheorem{remark}[theorem]{Remark}
\theoremstyle{definition}
\newtheorem{defin}[theorem]{Definition}
\newcommand\pf{\begin{proof}}
\newcommand\epf{\end{proof}}
\newcommand{\ds}{\displaystyle}
\newcommand{\Id}{\mathrm{Id}}
\newcommand{\Z}{\mathbb{Z}}
\newcommand{\N}{\mathbb{N}}
\newcommand{\C}{\mathbb{C}}
\newcommand{\id}{\mathrm{id}}
\newcommand{\Vect}{\mathrm{Vect}}
\newcommand{\Hom}{\mathrm{Hom}}
\newcommand{\Aut}{\mathrm{Aut}}
\newcommand{\TX}{\mathcal{T}^X}
\newcommand{\TG}{\mathcal{T}^G}
\newcommand{\M}{\mathcal{M}}
\numberwithin{equation}{subsection}
\title{Study of Quantum symmetries for vertex-transitive graphs using intertwinner spaces.}
\author{Arthur Chassaniol}
\address{}
\begin{document}

\maketitle

\textbf{Abstract.} We study the quantum automorphism group of vertex-transitive graphs using intertwinner spaces of the magic unitary matrix associated to this quantum subgroups of $S_n^+$. We also give some applications to quantum symmetries of circulant graphs.\bigskip\bigskip\smallskip

\section{Introduction}

A quantum permutation group on $n$ points is a compact quantum group acting faithfully on the classical space consisting of $n$ points. The following facts were discovered by Wang \cite{Wang}.

\begin{enumerate}
 \item There exists a largest quantum permutation group on $n$ points,  denoted $S_n^+$, and called \textsl{the} quantum permutation groups on $n$ points.
\item The quantum group $S_n^+$ is infinite-dimensional if $n \geq 4$, and hence in particular an infinite compact quantum group can act faithfully on a finite classical space. 
\end{enumerate}

Very soon after Wang's paper \cite{Wang}, the representation theory of $S_n^+$ was described  by Banica \cite{Sygeco}: it is similar to the one of $SO(3)$ and can be described using tensor categories of non-crossing partitions. This description, further axiomatized and generalized by Banica-Speicher \cite{bs}, led later to spectacular connections with free probability theory, see e.g. \cite{ks}.

The next natural question was the following one: does $S_n^+$ have many non-classical quantum subgroups, or is it isolated as an infinite quantum group acting faithfully on a finite classical space?

In order to find quantum subgroups of $S_n^+$, the quantum automorphism group of a finite graph was defined in \cite{leta,NeTu}. This construction indeed produced many examples of non-classical quantum permutation groups, answering positively to the above question.  
The known results on the computation of quantum symmetry groups of graphs are summarized in \cite{BaBi11} where the description of the quantum symmetry group of vertex-transitive graphs of small order (up to $11$) is given with an exception for the Petersen graph. The quantum symetries of Petersen graph was study in \cite{Sim} where the autor proves that Petersen graph has no quantum symmetry.

\medskip

The present paper is a contribution to the study of quantum automorphism groups of finite graphs: we study some way to prove that a vertex-transitiv graph has no quantum symmetries, with the study of associated intertwinners spaces (as in \cite{bs}). With our results we can better understand the quantum symmetries of some circulant graph, as $C_{13}(3,4)$.

\medskip

 The paper is organized as follows. Section 2 is preliminary section: we recall some basic facts  about compact quantum groups, quantum permutation groups and quantum automorphism group of finite graphs. Section 3 is devoted to intertwinner spaces for classical subgroups of $S_n$. In Sections 4 we study one particular intertwinnner space associated to vertex-transitive graphs. In section 5 we extend the study to the quantum case, and introduce the notion of $\mathcal{B}$-clos graphs. To finish the sections 6,7,8 and 9 are devoted to applications of results from section $5$.

\newpage

\section{Compact quantum groups and quantum automorphism group of finite graphs}

We first recall some basic facts concerning compact quantum groups. The books \cite{KliSch,NeTu} are convenient references for this topic, and all the definitions we omit can be found there. All algebras in this paper will be unital as well as all algebra morphisms, and $\otimes$ will denote the minimal tensor product of $C^*$-algebras as well as the algebraic tensor product, this should cause no confusion.

\begin{defin} A \emph{Woronowicz algebra} is a $C^*$-algebra $A$ endowed with a $*$-morphism $\Delta:A\to A\otimes A$ satisfying the coassociativity condition and the cancellation law
$$\overline{\Delta(A)(A\otimes 1)}=A\otimes A=\overline{\Delta(A)(1\otimes A)}$$
The morphism $\Delta$ is called the comultiplication of $A$.
\end{defin}

The category of Woronowicz algebras is defined in the obvious way. A commutative Woronowicz algebra is isomorphic with $C(G)$, the algebra of continuous functions on a compact group $G$, unique up to isomorphism, and the category of \emph{compact quantum groups} is defined to be the category dual to the category of Woronowicz algebras. Hence to any Woronowicz algebra $A$ corresponds a unique compact quantum group $G$ according to the heuristic notation $A=C(G)$.

Woronowicz's original definition for matrix compact quantum groups \cite{Woro1} is still the most useful to produce concrete examples, and we have the following fundamental result \cite{Woro2}.

\begin{theorem} Let $A$ be a $C^*$-algebra endowed with a $*$-morphism $\Delta: A\to A\otimes A$. Then $A$ is a Woronowicz algebra if and only if there exists a family of unitary matrices $(u_{\lambda})_{\lambda\in\Lambda}\in M_{d_{\lambda}}(A)$ satisfying the following three conditions:
\begin{enumerate}
\item The $*$-subalgebra $A_0$ generated by the entries $u_{ij}^{\lambda}$ of the matrices $(u^{\lambda})_{\lambda\in\Lambda}$ is dense in $A$.
\item For $\lambda\in\Lambda$ and $i,j\in\{1,\dots, d_{\lambda}\}$, one has $\Delta(u_{ij}^{\lambda})=\sum_{k=1}^{d_{\lambda}}{u_{ik}^{\lambda}\otimes u_{kj}^{\lambda}}$.
\item For $\lambda\in\Lambda$, the transpose matrix $(u^{\lambda})^t$ is invertible.
\end{enumerate}
\end{theorem}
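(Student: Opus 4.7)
My plan is to treat the two implications separately: the forward direction invokes Peter-Weyl decomposition for compact quantum groups, while the converse requires constructing an antipode on a dense Hopf $*$-subalgebra $A_0$ and then promoting the resulting algebraic cancellation to the $C^*$-algebraic level by continuity.

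For the forward direction, assume $A$ is a Woronowicz algebra. I would first invoke Woronowicz's existence result for a faithful Haar state (derived from coassociativity and cancellation) and the ensuing Peter-Weyl theory: $A$ contains a canonical dense Hopf $*$-subalgebra $A_0$ spanned by matrix coefficients of finite-dimensional unitary corepresentations, and one may select a maximal family $(u^\lambda)_{\lambda \in \Lambda}$ of pairwise inequivalent irreducible ones. Conditions (1) and (2) are then immediate from the setup. For (3), the antipode $S$ on $A_0$ satisfies $S(u_{ij}^\lambda) = (u_{ji}^\lambda)^*$, and a direct matrix computation using the antipode axiom $\sum_k S(u_{ik}^\lambda) u_{kj}^\lambda = \delta_{ij}$ shows that $(u^\lambda)^t$ is invertible with inverse given by the entrywise conjugate $\overline{u^\lambda}$.

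For the converse, let $A_0$ denote the $*$-subalgebra generated by the entries $u_{ij}^\lambda$. Coassociativity of $\Delta$ is immediate on generators from condition (2) and extends to $A$ by continuity. The main obstacle is the cancellation property. My plan is to construct an antipode $S: A_0 \to A_0$ by setting $S(u_{ij}^\lambda) = (u_{ji}^\lambda)^*$ on generators and extending as an anti-$*$-algebra homomorphism; condition (3) is precisely what is needed so that $S$ is well-defined on the subalgebra generated by the conjugate generators. Once $S$ is in place, the standard Hopf-algebraic manipulation $a \otimes b = \sum \Delta(a_{(1)})(1 \otimes S(a_{(2)}) b)$ (which follows from the antipode and counit axioms together with coassociativity) shows $\Delta(A_0)(1 \otimes A_0) = A_0 \otimes A_0$, and density of $A_0$ in $A$ together with joint continuity of multiplication upgrades this to $\overline{\Delta(A)(1 \otimes A)} = A \otimes A$; the symmetric argument handles the other cancellation relation.

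The step I expect to be most delicate is verifying that $S$ extends consistently to all of $A_0$: one must check that the candidate respects every polynomial relation among the $u_{ij}^\lambda$ and their adjoints, not only the defining ones, and this is where condition (3) does essential work. A clean way to proceed is to introduce a free $*$-algebra $F$ on symbols $v_{ij}^\lambda$, define $S$ as an anti-$*$-homomorphism on $F$, and verify that it descends to $A_0$ using the unitarity of $u^\lambda$ and the invertibility of $(u^\lambda)^t$. The remainder of the argument is then a routine density and continuity exercise.
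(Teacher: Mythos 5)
The paper does not prove this statement: it is quoted verbatim as Woronowicz's theorem from \cite{Woro2}, so there is no in-paper argument to compare yours against, and your attempt has to stand on its own. As it stands it has two genuine gaps. The more serious one is in the converse direction, in the step you yourself flag as delicate: defining $S(u_{ij}^{\lambda})=(u_{ji}^{\lambda})^*$ on generators and ``verifying that it descends'' from the free $*$-algebra cannot work as described, because $A_0$ is the \emph{concrete} subalgebra of $A$ generated by the $u_{ij}^{\lambda}$, i.e.\ a quotient of the free $*$-algebra by the ideal of \emph{all} relations that happen to hold in $A$ -- an ideal you have no presentation of -- and conditions (1)--(3) give no mechanism forcing an anti-homomorphism to preserve it. The standard proof avoids constructing the antipode altogether: one verifies the cancellation law directly by computing $\sum_k\Delta(u_{ik}^{\lambda})\bigl(1\otimes (u_{jk}^{\lambda})^*b\bigr)=u_{ij}^{\lambda}\otimes b$ from unitarity, handles $(u_{ij}^{\lambda})^*\otimes b$ by solving $\sum_k (u_{lk}^{\lambda})^*c_k=\delta_{lj}b$ -- which is exactly where the invertibility of $(u^{\lambda})^t$, equivalently of $\overline{u^{\lambda}}=((u^{\lambda})^t)^*$, enters -- and then shows that the set of $a$ with $a\otimes A\subseteq\overline{\operatorname{span}}\,\Delta(A)(1\otimes A)$ is a closed subalgebra containing the generators and their adjoints. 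The Hopf $*$-structure on $A_0$ is obtained \emph{after} this, from the corepresentation theory, not as an input to it.

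The second gap is in your forward direction: the assertion that $(u^{\lambda})^t$ is invertible ``with inverse given by the entrywise conjugate $\overline{u^{\lambda}}$'' is false in general. The identity $(u^t\bar u)_{ij}=\sum_k u_{ki}u_{kj}^*=\delta_{ij}$ says that $\bar u$ is unitary, which holds only in the Kac case and fails already for $SU_q(2)$, $q\neq\pm1$. The correct argument is that $\overline{u^{\lambda}}$ is itself a corepresentation, hence invertible with inverse $\bigl(S(\overline{u^{\lambda}}_{ij})\bigr)_{ij}$ by the antipode axiom applied to $\overline{u^{\lambda}}$ (not to $u^{\lambda}$), and invertibility of $\overline{u^{\lambda}}$ is equivalent to that of $(u^{\lambda})^t$. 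Finally, be aware that your forward direction delegates essentially the whole content of the theorem -- the construction of the Haar state from coassociativity and cancellation alone, and the resulting Peter--Weyl decomposition -- to a cited black box; that is acceptable in an expository sketch but should be stated as such rather than presented as a proof.
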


In fact, the $*$-algebra $A_0$ in the theorem is canonically defined, and is what is now called a compact Hopf algebra: a Hopf $*$-algebra having all its finite-dimensional comodules equivalent to unitary ones (see \cite{KliSch, NeTu}). The counit and antipode of $A_0$, denoted, respectively, $\epsilon$ and $S$, are referred to as the counit and antipode of $A$. The Hopf $*$-algebra $A_0$ is called the \emph{algebra of representation functions} on the compact quantum group $G$ dual to $A$, with another heuristic notation $A_0=\mathcal{O}(G)$. 

Conversely, starting from a compact Hopf algebra, the universal $C^*$-completion yields a Woronowicz algebra in the above sense: see \cite{KliSch, NeTu}. In fact there are possibly several different $C^*$-norms on $A_0$, but we will not be concerned with this question.\smallskip

As usual, a (compact) quantum subgroup $H\subset G$ corresponds to a surjective Woronowicz algebra morphism $C(G)\to C(H)$, or to a  surjective Hopf $*$-algebra morphism $\mathcal{O}(G)\to\mathcal{O}(H)$.\smallskip                          

We refer the reader to \cite{KliSch, NeTu} for large classes of examples, including $q$-deformations of classical compact Lie groups. In the present paper, we will be interested in the following fundamental example, due to Wang \cite{Wang}. First we need some terminology. A matrix $u\in M_n(A)$ is said to be orthogonal if $u=\bar{u}$ and $uu^t=I_n=u^tu$. A matrix $u$ is said to be magic unitary if all its entries are projections, all distinct elements  of a same row or same column are orthogonal, and sums of rows and columns are equal to $1$. A magic unitary matrix is orthogonal.

\begin{defin} The $C^*$-algebra $A_s(n)$ is defined to be the universal $C^*$-algebra generated by variables $(u_{ij})_{1\le i,j\le n}$, with relations making $u=(u_{ij})$ a magic unitary matrix.

The $C^*$-algebra $A_s(n)$ admits a Woronowicz algebra structure given by
$$\Delta(u_{ij}) =\sum_{k=1}^n{u_{ik}\otimes u_{kj}},\ \ \ \ \ \ \epsilon(u_{ij}) =\delta_{ij},\ \ \ \ \ \ S(u_{ij}) =u_{ji}$$

The associated compact quantum group is denoted by $S_n^+$, i.e.
$$A_s(n)=C(S_n^+)$$
\end{defin}

\begin{defin} A \emph{quantum permutation algebra} is a Woronowicz algebra quotient of $A_s(n)$ for some $n$. Equivalently, it is a Woronowicz algebra generated by the coefficients of a magic unitary matrix.
\end{defin}


We now come to quantum group actions, studied e.g. in \cite{Podl}. They correspond to Woronowicz algebra coactions. Recall that if $B$ is a $C^*$-algebra, a (right) \emph{coaction} of Woronowicz algebra $A$ on $B$ is a $*$-homomorphism $\alpha: B\to B\otimes A$ satisfying the coassociativity condition and 
$$\overline{\alpha(B)(1\otimes A)}=B\otimes A$$

Wang has studied quantum groups actions on finite-dimensional $C^*$-algebras in \cite{Wang}, where the following result is proved.

\begin{theorem} The Woronowicz algebra $A_s(n)$ is the universal Woronowicz algebra coacting on $\C^n$, and is infinite-dimensional if $n\ge 4$.
\end{theorem}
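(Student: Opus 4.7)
The plan is to prove the two assertions separately: first the universal coaction property, then the infinite-dimensionality for $n \ge 4$.

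For the universality, I would start from an arbitrary coaction $\alpha : \C^n \to \C^n \otimes A$ and work in the canonical basis of minimal projections $e_1,\dots,e_n$ of $\C^n$, writing $\alpha(e_j) = \sum_{i=1}^n e_i \otimes u_{ij}$ for uniquely determined elements $u_{ij} \in A$. The plan then is to read off the magic unitary relations directly from the axioms: the relation $e_j^*=e_j=e_j^2$ combined with the fact that $\alpha$ is a $*$-homomorphism forces $u_{ij}^* = u_{ij}$ and $u_{ij}^2 = u_{ij}$; the orthogonality $e_je_k=0$ for $j\neq k$ forces $u_{ij}u_{ik}=0$ within each row; and the unit condition $\sum_j e_j = 1$ gives $\sum_j u_{ij}=1$ for each row. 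The column relations (orthogonality within columns and $\sum_i u_{ij}=1$) are obtained either from the cancellation condition $\overline{\alpha(\C^n)(1\otimes A)} = \C^n \otimes A$, which forces the transpose matrix to generate similar identities, or more elegantly by invoking preservation of the canonical trace $\tau(e_i)=1/n$ under any coaction, which gives $(\tau\otimes\id)\alpha(e_j)=1/n\cdot 1$ and hence the missing identities. Once $u=(u_{ij})$ is shown to be magic unitary, the defining universal property of $A_s(n)$ yields a unique Woronowicz algebra morphism $A_s(n) \to A$ sending the generators to the $u_{ij}$, and coassociativity of $\alpha$ forces it to intertwine the comultiplications.

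For the infinite-dimensionality when $n \ge 4$, the plan is to exhibit an explicit infinite-dimensional Woronowicz algebra quotient of $A_s(4)$, which then embeds in $A_s(n)$ by block-diagonal extension with $\delta_{ij}$ on the remaining $n-4$ diagonal entries. Consider the $4\times 4$ block-diagonal magic unitary
\[
u = \begin{pmatrix} p & 1-p & 0 & 0 \\ 1-p & p & 0 & 0 \\ 0 & 0 & q & 1-q \\ 0 & 0 & 1-q & q \end{pmatrix},
\]
where $p,q$ are two free projections generating the group $C^*$-algebra $C^*(\Z_2 * \Z_2)$. The entries of this matrix satisfy the magic unitary relations, so by the universal property just proved there is a surjection $A_s(4) \to C^*(\Z_2 * \Z_2)$. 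Since $\Z_2*\Z_2$ is infinite and $C^*(\Z_2 * \Z_2)$ is accordingly infinite-dimensional, so is $A_s(4)$.

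The main obstacle in this argument is the passage from row relations to column relations in the universality proof: everything in the $*$-algebraic translation of the coaction axioms is naturally indexed by the second index $j$ of $u_{ij}$, and the symmetry between rows and columns is not apparent from the definition of a coaction alone. The cleanest route I would take is to argue the trace preservation step as a small separate lemma, using the Haar state of the universal compact quantum group acting: averaging $\alpha$ against the Haar state produces a conditional expectation onto the fixed points, and the uniqueness of the normalized trace on $\C^n$ forces $(\tau\otimes\id)\alpha=\tau(\cdot)\cdot 1$, from which the column identities fall out. Once this piece is in hand the rest of the argument is essentially a bookkeeping check, and the construction of $C^*(\Z_2*\Z_2)$ as a quotient handles the dimension statement immediately.
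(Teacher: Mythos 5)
First, a point of reference: the paper does not prove this theorem at all --- it is quoted verbatim from Wang's paper \cite{Wang}, so there is no internal proof to compare against. Your proposal is an attempt to reconstruct Wang's original argument, and its overall architecture (derive the magic unitary relations from the coaction axioms, then exhibit $C^*(\Z_2 * \Z_2)$ as a quotient via a block-diagonal magic unitary) is indeed the right one. The second half is fully correct: the $4\times 4$ matrix built from two free projections is a magic unitary, the induced surjection $A_s(4)\to C^*(\Z_2*\Z_2)$ is compatible with the group-algebra comultiplication $\Delta(g)=g\otimes g$, the infinite dihedral group $\Z_2*\Z_2$ makes the target infinite-dimensional, and padding with $I_{n-4}$ handles general $n\ge 4$. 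Likewise, the derivation of the \emph{row} relations ($u_{ij}=u_{ij}^*=u_{ij}^2$, $u_{ij}u_{ik}=\delta_{jk}u_{ij}$, $\sum_j u_{ij}=1$) from $\alpha$ being a unital $*$-homomorphism is correct and routine.

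The gap is exactly where you locate the difficulty, and your proposed fix does not close it. The claim that ``the uniqueness of the normalized trace on $\C^n$ forces $(\tau\otimes\id)\alpha=\tau(\cdot)\cdot 1$'' is false as stated: $\C^n$ is commutative, so \emph{every} state on it is tracial, and it carries an $(n-1)$-simplex of them; nothing singles out the uniform trace by a uniqueness argument (contrast with $M_n(\C)$, where the normalized trace is unique --- and where, tellingly, Wang shows the universal quantum automorphism group without a fixed state does \emph{not} exist). The conditional expectation $E=(\id\otimes h)\alpha$ onto the fixed-point algebra only tells you that \emph{some} invariant states exist ($\psi\circ E$ for any state $\psi$ on the fixed points), not that $\tau$ is among them. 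The genuine content of Wang's theorem is precisely this step: one must use that $u$ is a corepresentation (coassociativity plus the cancellation/counit condition give $\Delta(u_{ij})=\sum_k u_{ik}\otimes u_{kj}$ and $\epsilon(u_{ij})=\delta_{ij}$, hence $u$ is invertible with $u^{-1}=S(u)$ entrywise), combine this with the identity $u^*u=\mathrm{diag}(c_1,\dots,c_n)$, $c_j=\sum_i u_{ij}$, which follows from the row relations, and then run a positivity/invertibility argument to force each $c_j=1$; the column orthogonality then does follow automatically from $\sum_i u_{ij}=1$ with projection entries, as you implicitly use. Your alternative route via the cancellation condition (``forces the transpose matrix to generate similar identities'') gestures at the correct mechanism but is not an argument. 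So the proposal is a correct skeleton with the load-bearing lemma unproved and mis-justified; to repair it you should replace the uniqueness-of-trace appeal by the invertibility-of-the-corepresentation argument sketched above, which is what \cite{Wang} actually does.
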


The coaction is constructed in the following manner. Let $e_1,\dots,e_n$ be the canonical basis of $\C^n$. Then the coaction $\alpha: \C^n\to\C^n\otimes A_s(n)$ is defined by the formula
$$\alpha(e_i)=\sum_{j=1}^n{e_j\otimes u_{ji}}$$

We refer the reader to \cite{Wang} for the precise meaning of universality in the theorem, but roughly speaking this means that $S_n^+$ is the largest compact quantum group acting on $n$ points, and deserves to be called the \emph{quantum permutation group on n points}.\smallskip

Equivalently, Wang's theorem states that any Woronowicz algebra coacting faithfully on $\C^n$ is a quotient of the Woronowicz algebra $A_s(n)$, and shows that quantum groups acting on $n$ points correspond to Woronowicz algebra quotient of $A_s(n)$, and hence to quantum permutation algebras. In particular, there is a surjective Woronowicz algebra morphism $A_s(n)\to C(S_n)$, yielding a quantum group embedding $S_n\subset S_n^+$. More directly, the existence of the surjective morphism $A_s(n)\to C(S_n)$ follows from the fact that $C(S_n)$ is the universal commutative $C^*$-algebra generated by the entries of a magic unitary matrix. See \cite{Wang} for details.\smallskip

The complete study of $S_4^+$ is in \cite{BaBi3}. We will study some subgroups of $A_s(n)$ using vertex-transitives graphs as in \cite{BaBi11}. 

We now recall the definition of the quantum automorphism group of a finite graph $X$ using \cite{BaHomog,BiNew}. \medskip

For a finite graph $X$ with $n$ vertices, it is convenient to also call $X$ the set of vertices of $X$. The complement graph of $X$ will be denoted by $X^c$.  If $i$ and $j$ are two vertices of $X$ we use the notation $i\sim_X j$ when they are connected and $i\not\sim_X j$ when they are not (or simply $i\sim j$ when no confusion can arise). \smallskip


\begin{defin} The \emph{adjacency matrix} of $X$ is the matrix $d_X=(d_{ij})_{1\le i,j\le n}\in M_n(0,1)$ given by $d_{ij}=1$ if $i,j$ are connected by an edge, and $d_{ij}=0$ if not. 

\end{defin}

The classical automorphism group of $X$ will be denoted by $\mathrm{Aut}(X)$ (this is a subgroup of $S_n$) and we have the following way to characterize its elements.

\begin{prop}\label{00} Identifying $\sigma\in S_n$ to the associated permutation matrix $P_{\sigma}\in M_n(\{0,1\})$, we have:
$$\sigma\in\mathrm{Aut}(X)\Longleftrightarrow d_XP_{\sigma}=P_{\sigma}d_X$$
\end{prop}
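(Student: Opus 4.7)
The plan is to translate both sides of the purported equivalence into entry-wise conditions on $n\times n$ matrices and observe that they coincide. First I would restate the classical definition of a graph automorphism: $\sigma \in S_n$ belongs to $\mathrm{Aut}(X)$ if and only if for every pair of vertices $(i,j)$ one has $i\sim_X j \Leftrightarrow \sigma(i)\sim_X\sigma(j)$, which in terms of the adjacency matrix reads
\[
d_{ij}=d_{\sigma(i)\sigma(j)}\qquad\forall\, i,j\in\{1,\dots,n\}.
\]

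Second, I would fix the convention for the permutation matrix, namely $(P_\sigma)_{ij}=\delta_{i,\sigma(j)}$ (equivalently $P_\sigma e_j=e_{\sigma(j)}$), and simply compute the two products entry by entry:
\[
(d_X P_\sigma)_{ij}=\sum_{k=1}^n d_{ik}\,\delta_{k,\sigma(j)}=d_{i,\sigma(j)},\qquad
(P_\sigma d_X)_{ij}=\sum_{k=1}^n \delta_{i,\sigma(k)}\,d_{kj}=d_{\sigma^{-1}(i),j}.
\]
Hence the matrix identity $d_X P_\sigma = P_\sigma d_X$ is equivalent to $d_{i,\sigma(j)}=d_{\sigma^{-1}(i),j}$ for all $i,j$.

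Third, performing the substitution $i\mapsto \sigma(i)$ in this equality yields $d_{\sigma(i),\sigma(j)}=d_{i,j}$ for all $i,j$, which is precisely the defining condition of a graph automorphism recalled in the first step. Running the substitution in the opposite direction gives the converse, so both implications are immediate.

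The calculation is routine and there is no real obstacle; the only point requiring care is to be explicit about the convention chosen for associating a permutation matrix to $\sigma$, since the opposite convention $(P_\sigma)_{ij}=\delta_{\sigma(i),j}$ would produce the relation $d_{\sigma(i),j}=d_{i,\sigma^{-1}(j)}$ and lead to the same conclusion after a symmetric substitution. I would therefore state the convention used once and for all at the beginning of the proof so that the entry-wise computations above are unambiguous.
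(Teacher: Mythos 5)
Your proof is correct: the entrywise computation $(d_XP_\sigma)_{ij}=d_{i,\sigma(j)}$, $(P_\sigma d_X)_{ij}=d_{\sigma^{-1}(i),j}$ followed by the substitution $i\mapsto\sigma(i)$ is exactly the standard argument, and your convention $P_\sigma e_j=e_{\sigma(j)}$ agrees with the one the paper fixes later. The paper itself states this proposition without proof, but the identical computation appears in its proof of the equivalence $(1)\Leftrightarrow(2)$ of Lemma \ref{l2}, so your route matches the paper's.
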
\smallskip

This characterization in the classical case leads to the following natural definition of the quantum automorphism group of a finite graph, see \cite{BaHomog}.

\begin{defin} Associated to a finite graph $X$ is the quantum permutation algebra
$$A(X)=A_s(n)/\langle d_Xu=ud_X\rangle$$
where $n$ is the number of vertices of $X$.

\end{defin}\smallskip

The quantum automorphism group corresponding to $A(X)$ is the quantum automorphism group of $X$, denoted $\mathbb{G}_X$. In this way we have a commuting diagram of Woronowicz algebras:

\[\xymatrix{
     A_s(n)=C(S_n^+) \ar[rr]\ar[d]&  & A(X)= C(\mathbb{G}_X) \ar[d]\\
      C(S_n) \ar[rr]  & & C(\mathrm{Aut}(X))
    }\]
with the kernel of the right arrow being the commutator ideal of $A(X)$.  \smallskip

\begin{example} For the graph with $n$ vertices and no edges we have $A(X)=A_s(n)$, so $\mathbb{G}_X=S_n^+$. Moreover  we have $A(X^c)= A(X)$,
because $ud_X=d_Xu$ and $ud_{X^c}=d_{X^c}u$ are equivalent when $u$ is magic unitary.

If $X=C_n$ is the $n$-cycle graph one can show that for $n\ne 4$, $A(C_n)$ is commutative, thus $A(C_n)=C(\mathrm{Aut}(C_n))$ and therefore $\mathrm{Aut}(C_n)=\mathbb{G}_{C_n}=\mathcal{D}_n$,
where $\mathcal{D}_n$ is the $n$-dihedral group.
For more examples see \cite{BaBi11}.
\end{example}



\medskip

\section{Intertwinner spaces in the classical case}

We use the notations from  \cite{bs}. Let $G$ be a subgroup of $\mathbb{S}_n$ (we identify the elements $\sigma$ of $G$ to their associated permutation matrix $P_{\sigma}\in\M_n(\{0,1\})$). The intertwinner spaces are the following $C_G(k,l)$:
$$C_G(k,l)=\{T\in\mathrm{Hom}((\C^n)^{\otimes k},(\C^n)^{\otimes l}):=C(k,l)\mid\ \forall g\in G, Tg^{\otimes k}=g^{\otimes l}T\}$$
with $(k,l)\in\N\times\N$.

\begin{remark}By Frobenius duality we have
$$C_{G}(k,l)\simeq C_{G}(0,k+l)$$
\end{remark}

We denote by $C_G$ the set of all $C_G(k,l)$ and we define the elements $U, M, C$ as follow:
$$U\in C_G(0,1):\ \ U(1)=\sum_{k=0}^{n-1}{e_k}\ \ \ \ \ \ \ \ $$
$$M\in C_G(2,1):\ \ M(e_i\otimes e_j)=\delta_{i,j}e_i\ $$
$$S\in C_G(2,2):\ \ S(e_i\otimes e_j)=e_j\otimes e_i$$

and their duals:

$$U^*\in C_G(1,0):\ \ U^*(e_i)=1\ \ \ \ \ \ \ \ $$
$$M^*\in C_G(1,2):\ \ M^*(e_i)=e_i\otimes e_i $$
$$S^*\in C_G(2,2):\ \ S^*=S \ \ \ \ \ \ \ \ \ \ \ $$

From  \cite{bs} we get the following proposition on the structure of $C_G$.

\begin{prop}\label{p1}{[\cite{bs}, Proposition 1.2]} The collection of vector spaces $C_G(k,l)$ is a tensor category with duals, in the sense that it has the following properties:
\begin{enumerate}
\item $(T,T'\in C_G)\Rightarrow T+T'\in C_G$
\item $(T,T'\in C_G)\Rightarrow T\otimes T'\in C_G$
\item If $T,T'\in C_G$ are composable, then $T\circ T'\in C_G$
\item $(T\in C_G)\Rightarrow T^*\in C_G$
\item $I_d\in C_G(1,1)$
\end{enumerate}
\end{prop}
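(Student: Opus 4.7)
The plan is to verify each of the five properties directly from the defining intertwining condition $Tg^{\otimes k} = g^{\otimes l}T$, making crucial use of the fact that every element of $G$ is a permutation matrix, hence unitary.

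Properties (1), (3), and (5) are essentially immediate. Linearity of both sides of the intertwining relation in $T$ gives (1). For (3), if $T \in C_G(k,l)$ and $T' \in C_G(l,m)$ are composable, then
$$(T' \circ T)g^{\otimes k} = T'(Tg^{\otimes k}) = T'(g^{\otimes l}T) = (T'g^{\otimes l})T = g^{\otimes m}(T' \circ T).$$
Property (5) is trivial since $I_d g = g = g I_d$ on $\C^n$.

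For property (2), the key ingredient is that tensor powers intertwine with the diagonal $G$-action: $g^{\otimes(k+k')} = g^{\otimes k} \otimes g^{\otimes k'}$. Given $T \in C_G(k,l)$ and $T' \in C_G(k',l')$, I would compute
$$(T \otimes T') g^{\otimes(k+k')} = (T \otimes T')(g^{\otimes k} \otimes g^{\otimes k'}) = Tg^{\otimes k} \otimes T'g^{\otimes k'} = g^{\otimes l}T \otimes g^{\otimes l'}T' = g^{\otimes(l+l')}(T \otimes T'),$$
which gives $T \otimes T' \in C_G(k+k', l+l')$.

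The only step requiring even a moment of thought is property (4), closure under adjoints. Starting from $Tg^{\otimes k} = g^{\otimes l}T$ and taking adjoints yields $(g^{\otimes k})^* T^* = T^* (g^{\otimes l})^*$. Since each $g \in G$ is a permutation matrix, it is unitary, so $g^* = g^{-1}$ and $(g^{\otimes r})^* = (g^{-1})^{\otimes r}$. This gives $(g^{-1})^{\otimes k} T^* = T^* (g^{-1})^{\otimes l}$ for every $g \in G$. Using that $g \mapsto g^{-1}$ is a bijection of the group $G$ (i.e.\ relabelling $g^{-1}$ as $g$) produces $g^{\otimes k} T^* = T^* g^{\otimes l}$, which is exactly the condition $T^* \in C_G(l,k)$. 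I do not anticipate any real obstacle: the proposition is a formal restatement of the fact that $C_G$ is the category of $G$-equivariant maps between tensor powers of the permutation representation $\C^n$, and each of the five properties is a standard consequence of this description.
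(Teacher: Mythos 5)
Your proof is correct and complete. The paper itself gives no proof of this proposition: it is quoted directly from Banica--Speicher (\cite{bs}, Proposition 1.2), so there is no in-paper argument to compare against. Your direct verification is the standard one, and the only point of substance --- the use of unitarity of permutation matrices together with the relabelling $g \mapsto g^{-1}$ over the group to establish closure under adjoints in (4) --- is handled correctly; that is indeed the one place where the group structure of $G$ (closure under inverses), rather than mere closure under multiplication, is genuinely needed.
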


We use a version of a Tannaka-Krein duality theorem about Woronowicz algebra (in \cite{Woro3}). The use we make is an analogous of theorem $1.4$ in \cite{bs} for the study of compact groups containing $\mathbb{S}_n$.

\begin{theorem} The construction $G\to C_G$ induce a bijection between subgroups $G\subset\mathbb{S}_n$ and the symmetric tensor categories with duals, $\mathcal{C}$, satisfying
$$C_{\mathbb{S}_n}\subset\mathcal{C}\subset C_{\{1\}}$$
where $C_{\{1\}}(k,l)=C(k,l)$.
\end{theorem}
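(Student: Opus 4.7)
My strategy is to bootstrap from the quantum Tannaka--Krein reconstruction theorem (essentially \cite[Theorem 1.4]{bs}, also going back to \cite{Woro3}, which handles quantum subgroups of $S_n^+$) and use the symmetry hypothesis on $\mathcal{C}$ to single out the classical subgroups of $S_n$ among them.

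First I would check that $G \mapsto C_G$ is well-defined into the prescribed class. Proposition \ref{p1} already supplies the tensor category with duals structure. The inclusions $C_{S_n} \subset C_G \subset C_{\{1\}}$ are immediate from $\{1\} \subset G \subset S_n$, since shrinking the group can only enlarge the space of intertwiners. Symmetry of $C_G$, i.e.\ $S \in C_G(2,2)$, holds because every $g \in G \subset S_n$ is realized as a permutation matrix and any such matrix commutes with the coordinate flip on $\C^n \otimes \C^n$.

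For the inverse map, let $\mathcal{C}$ be a symmetric tensor category with duals satisfying $C_{S_n} \subset \mathcal{C} \subset C_{\{1\}}$. Since $S_n \subset S_n^+$ one has $C_{S_n^+} \subset C_{S_n} \subset \mathcal{C}$, so Woronowicz's Tannaka--Krein theorem produces a compact matrix quantum group $H$ with fundamental corepresentation $v$ on $\C^n$ and $C_H = \mathcal{C}$. The presence of $M, U$ and their adjoints in $\mathcal{C}$ is equivalent to $v$ being a magic unitary, so $H$ is a quantum subgroup of $S_n^+$. The crucial step is then to read off classicality from the symmetry of $\mathcal{C}$: writing $S \circ v^{\otimes 2} = v^{\otimes 2} \circ S$ in coordinates yields $v_{ij} v_{kl} = v_{kl} v_{ij}$ for all indices, so $C(H)$ is commutative and $H$ is an ordinary compact subgroup of $S_n$.

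That the two constructions are mutually inverse is the content of Tannaka--Krein: by construction $C_H = \mathcal{C}$, and conversely $G \mapsto C_G \mapsto H$ recovers $H = G$ by specialising the reconstruction to the commutative case (or equivalently by classical Tannaka duality applied to the faithful representation of $G$ on $\C^n$). The main obstacle is really the equivalence ``$S \in C_H \Longleftrightarrow C(H)$ commutative'', which although a short calculation is the conceptual bridge turning symmetry of the category into classicality of the quantum group; everything else is a direct application of the duality already in \cite{bs,Woro3}.
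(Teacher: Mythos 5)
Your argument is correct and follows exactly the route the paper itself takes: the paper gives no written proof but simply invokes Woronowicz's Tannaka--Krein duality in the spirit of \cite[Theorem 1.4]{bs}, which is precisely your reduction to the quantum bijection plus the observation that $S\in\mathcal{C}$ forces the relations $v_{ij}v_{kl}=v_{kl}v_{ij}$ and hence commutativity. Your write-up in fact supplies the bridge (symmetry of the category $\Longleftrightarrow$ classicality of the reconstructed quantum group) that the paper leaves implicit.
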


 $C_G$ is spanned as symmetric tensor category by endomorphism 
 that characterize the relations satisfing by the coefficients of element of $G$ (view as matrices). The matrix $u=(u_{ij})_{0\le i,j\le n-1}$ is in $\mathbb{S}_n$ if:
\begin{enumerate}[i.]
\item $\ds\sum_{k=0}^{n-1}{u_{ik}}=\ds\sum_{k=0}^{n-1}{u_{kj}}=1$
\item $u_{ij_1}u_{ij_2}=\delta_{j_1,j_2}u_{ij_1}$ et $u_{i_1j}u_{i_2j}=\delta_{i_1,i_2}u_{i_1j}$
\item The $u_{ij}$ commute
\end{enumerate}

Then we can check that:
$$ (i)\Leftrightarrow U, U^*\in C_G(0,1)$$
$$ (ii)\Leftrightarrow M, M^*\in C_G(2,1)$$
$$ (iii)\Leftrightarrow S\in C_G(2,2)$$

so we get:

$$C_{\mathbb{S}_n}=\langle U, M, S\rangle_{+, \circ, \otimes, *}$$

There is a more explicit way to describe  $C_{\mathbb{S}_n}$ by using the set $P(k,l)$ of all partitions of $[1,\dots,k+l]$, see \cite{bs}.

\begin{remark} 
 $(3)+(4)+(ii)\Rightarrow (5)$ since $I_d=M\circ M^*$.
\end{remark}

When $G=\Aut(X)$ we use this following description

\begin{prop}\label{TannaClassique} If $X$ is a vertex-transitive finite graph, then
$$C_{\Aut(X)}=\langle U, M, S, d_X \rangle_{+,\circ,\otimes,*}$$
\end{prop}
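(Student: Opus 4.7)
The plan is to apply the Tannaka--Krein duality stated above to the symmetric tensor category with duals
$$\mathcal{C}:=\langle U, M, S, d_X\rangle_{+,\circ,\otimes,*}.$$
First I would check that $\mathcal{C}$ lies in the allowed range of the bijection: by construction it contains $U$, $M$, $S$ and therefore $C_{\mathbb{S}_n}=\langle U,M,S\rangle_{+,\circ,\otimes,*}$, while it is trivially contained in $C_{\{1\}}$ since its morphisms are linear maps between tensor powers of $\mathbb{C}^n$. The duality then produces a unique subgroup $H\subset\mathbb{S}_n$ with $C_H=\mathcal{C}$, and the whole proposition is reduced to showing that $H=\mathrm{Aut}(X)$.

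The key step is to identify $H$. By the very definition of $C_H$, an element $\sigma\in\mathbb{S}_n$ lies in $H$ if and only if $P_\sigma^{\otimes l}\circ T=T\circ P_\sigma^{\otimes k}$ for every $T\in\mathcal{C}(k,l)$. Because the collection of operators intertwining the tensor powers of $P_\sigma$ is itself closed under $+,\circ,\otimes$ and $*$, it suffices to check the intertwining relation on generators. The generators $U,M,S$ belong to $C_{\mathbb{S}_n}$, hence are intertwined by every $\sigma\in\mathbb{S}_n$; the only remaining constraint is therefore $P_\sigma d_X=d_XP_\sigma$, which by Proposition \ref{00} is exactly the condition $\sigma\in\mathrm{Aut}(X)$. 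This gives $H=\mathrm{Aut}(X)$, and consequently $C_{\mathrm{Aut}(X)}=C_H=\mathcal{C}$, as claimed.

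No serious obstacle is anticipated: the statement is essentially a dictionary translation between the matrix characterization of $\mathrm{Aut}(X)$ in Proposition \ref{00} and the tensor-categorical language of intertwiners, mediated by Tannaka--Krein. The only point requiring a moment of care is that $\mathcal{C}$ really is a \emph{symmetric} tensor category with duals in the sense of Proposition \ref{p1}, but this is built into the generating operations $+,\circ,\otimes,*$ together with the presence of $S$.
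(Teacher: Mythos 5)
Your proof is correct and follows essentially the same route as the paper: both deduce the result from the Tannaka--Krein bijection together with the observation (Proposition \ref{00}) that $\Aut(X)$ is cut out inside $\mathbb{S}_n$ by the single relation $P_{\sigma}d_X=d_XP_{\sigma}$. You merely make explicit the reconstruction of the group from the category $\langle U,M,S,d_X\rangle_{+,\circ,\otimes,*}$, which the paper's one-line proof leaves implicit.
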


\begin{proof}
Elements of $\Aut(X)$ are those of $\mathbb{S}_n$ which commute with $d_X$, and by definition 
$$ (d_X\in C_{\Aut(X)}(1,1))\Leftrightarrow(\forall \sigma\in\Aut(X),\ P_{\sigma}d_X=d_XP_{\sigma})$$
which gives us the result. 
\end{proof}

\medskip

\section{Spaces $C_{\Aut(X)}(1,1)$ and $\mathcal{T}^X$}

In this part $G$ is a transitive subgroup of $\mathbb{S}_n$ and we will consider that  $\mathbb{S}_n$ is the set of permutations over $\{0,1,\dots, n-1\}$.

\begin{defin} For all $i\in[0,n-1]$, we denote by $G_i$ the stabilisator of $i$ in $G$. Then we denote by $\mathcal{O}_0^s$ the $G_0$-orbit over $[0,n-1]$ with $\mathcal{O}_0^0=\{0\}$.

For $i\in[1,n-1]$ we choose $\sigma_i\in G$ such that $\sigma_i(0)=i$ and we define:
$$ \mathcal{O}_i^s:=\sigma_i(\mathcal{O}_0^s)$$
\end{defin}

\begin{prop} \label{D1} The $\mathcal{O}_i^s$ has the following properties:
\begin{enumerate}[i.]
\item The  $\mathcal{O}_i^s$, for $0\le s\le r$, are the $G_i$-orbit over $[0,n-1]$.
\item The definition of $\mathcal{O}_i^s$ does not depend on the choice of $\sigma_i\in G$ such that $\sigma_i(0)=i$.
\item $\forall\sigma\in G$, $\forall i,j\in[0,n-1]$, $\forall s_1,s_2\in[0,r]$ we have
$$\sigma(\mathcal{O}_i^{s_1})=\mathcal{O}_{\sigma(i)}^{s_1}\ \ \ \text{and}\ \ \ \sigma(\mathcal{O}_i^{s_1}\cap \mathcal{O}_j^{s_2})=\mathcal{O}_{\sigma(i)}^{s_1}\cap\mathcal{O}_{\sigma(j)}^{s_2}$$
\end{enumerate}
\end{prop}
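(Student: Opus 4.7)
The plan is to exploit the basic fact from group actions that, for a transitive action, stabilizers at different points are conjugate: specifically, since $\sigma_i(0)=i$, one has $G_i=\sigma_iG_0\sigma_i^{-1}$. From this, all three items follow by direct manipulation; the main task is just to sequence them so (ii) and (iii) can rely on (i).

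For item (i), I would start by observing that for any $x\in[0,n-1]$, the $G_i$-orbit of $x$ equals
$$G_i\cdot x=\sigma_iG_0\sigma_i^{-1}\cdot x=\sigma_i\bigl(G_0\cdot\sigma_i^{-1}(x)\bigr)=\sigma_i(\mathcal{O}_0^{s})$$
where $s$ is the unique index with $\sigma_i^{-1}(x)\in\mathcal{O}_0^s$. This shows that the sets $\sigma_i(\mathcal{O}_0^s)=\mathcal{O}_i^s$ for $0\le s\le r$ exhaust the $G_i$-orbits, and they are pairwise disjoint because the $\mathcal{O}_0^s$ are and $\sigma_i$ is a bijection.

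For (ii), I would take two elements $\sigma_i,\sigma_i'\in G$ both sending $0$ to $i$. Then $\tau:=\sigma_i^{-1}\sigma_i'\in G_0$, so $\tau$ preserves each $G_0$-orbit: $\tau(\mathcal{O}_0^s)=\mathcal{O}_0^s$. Applying $\sigma_i$ gives $\sigma_i'(\mathcal{O}_0^s)=\sigma_i(\mathcal{O}_0^s)$, so the definition of $\mathcal{O}_i^s$ is independent of the choice of $\sigma_i$.

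For (iii), given $\sigma\in G$, the element $\sigma\sigma_i\in G$ sends $0$ to $\sigma(i)$, so by (ii) it may be used as the representative for $\sigma(i)$, yielding
$$\mathcal{O}_{\sigma(i)}^{s_1}=(\sigma\sigma_i)(\mathcal{O}_0^{s_1})=\sigma\bigl(\sigma_i(\mathcal{O}_0^{s_1})\bigr)=\sigma(\mathcal{O}_i^{s_1}).$$
The second formula then follows immediately from the first together with the fact that $\sigma$ is a bijection (so $\sigma(A\cap B)=\sigma(A)\cap\sigma(B)$), applied to $A=\mathcal{O}_i^{s_1}$ and $B=\mathcal{O}_j^{s_2}$. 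No step here is really an obstacle; the only subtle point is being careful about the logical order, since (iii) implicitly uses (ii) in order to replace $\sigma_{\sigma(i)}$ by $\sigma\sigma_i$.
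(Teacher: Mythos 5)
Your proof is correct and follows essentially the same route as the paper: all three items rest on the conjugation $G_i=\sigma_i G_0\sigma_i^{-1}$, with (iii) deduced by applying (ii) to the representative $\sigma\sigma_i$. The only (harmless) variation is in (ii), where you invoke the setwise invariance of $G_0$-orbits under elements of $G_0$ instead of the paper's one-sided inclusion plus cardinality argument.
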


\begin{proof} $(i)$: Let $j_0\in \mathcal{O}_i^s=\sigma_i(\mathcal{O}_0^s)$, $j_0=\sigma_i(i_0)$ with $i_0\in\mathcal{O}_0^s$.
\begin{eqnarray*} 
j\in\mathcal{O}_i^s=\sigma_i(\mathcal{O}_0^s)&\Leftrightarrow& \exists j'\in\mathcal{O}_0^s, \ j=\sigma_i(j')\\
&\Leftrightarrow&  \exists\sigma'\in G_0,\ j=\sigma_i\sigma'(i_0) \\
&\Leftrightarrow&   \exists\sigma'\in G_0,\ j=\sigma_i\sigma'\sigma_i^{-1}(j_0) \\
&\Leftrightarrow&   \exists\sigma'\in \sigma_i G_0\sigma_i^{-1},\ j=\sigma'(j_0) \\
&\Leftrightarrow&   \exists\sigma'\in G_{\sigma_i(1)},\ j=\sigma'(j_0) \\
&\Leftrightarrow&   \exists\sigma'\in G_{i},\ j=\sigma'(j_0) \\
&\Leftrightarrow&   j \text{ is in the } G_i\text{-orbit of}\ j_0
\end{eqnarray*}
which show that the $ \mathcal{O}_i^s$ are the annonced orbits.

$(ii)$: Let $\sigma\in G$ such that $\sigma(0)=i$, then
$$k\in\sigma(\mathcal{O}_0^s)\Leftrightarrow \exists k_0\in\mathcal{O}_0^s, \ k=\sigma(k_0) \Leftrightarrow \exists k_0\in\mathcal{O}_0^s, \ k=\sigma_i(\sigma_i^{-1}\sigma(k_0)) $$

since $\sigma_i^{-1}\sigma(0)=0$, so $\sigma_i^{-1}\sigma\in G_0$. By  definition of $\mathcal{O}_0^s$ we obtain $\sigma_i^{-1}\sigma(k_0)\in\mathcal{O}_0^s$ and we get
$$k\in\sigma(\mathcal{O}_0^s)\Longrightarrow k\in\sigma_i(\mathcal{O}_0^s) $$
which mean
$$\sigma(\mathcal{O}_0^s)\subset\sigma_i(\mathcal{O}_0^s) $$
then using the cardinal we find $\sigma(\mathcal{O}_0^s)=\sigma_i(\mathcal{O}_0^s)$  and the definition of $\mathcal{O}_i^s$ does not depend of the choice on $\sigma_i$. 

$(iii)$: The first equality comes from this following calculation: 
$$\sigma(\mathcal{O}_i^{s_1})=\sigma\sigma_i(\mathcal{O}_0^{s_1})=\mathcal{O}_{\sigma\sigma_i(0)}^{s_1}=\mathcal{O}_{\sigma(i)}^{s_1}$$
 and the second from:
$$\sigma(\mathcal{O}_i^{s_1}\cap\mathcal{O}_{j}^{s_2})=\sigma(\mathcal{O}_i^{s_1})\cap\sigma(\mathcal{O}_{j}^{s_2})=\mathcal{O}_{\sigma(i)}^{s_1}\cap\mathcal{O}_{\sigma(j)}^{s_2}$$
\end{proof}

\begin{defin} For every $s\in[0,r]$ we define $T_s\in\Hom(\C^n,\C^n)$ by
$$T_s(e_j)=\sum_{i\in\mathcal{O}_j^{s}}{e_i}$$
and we denote
$$\mathcal{T}^G:=\Vect(T_s\mid 0\le s\le r)$$
By definition the family $(T_0,\dots,T_r)$ is free and it's a basis of $\mathcal{T}^G$.
\end{defin}

\begin{remark} This definition of $T_s$ is not canonical because there is a choice made in the order of orbit's labels. But the definition of $\mathcal{T}^G$ is canonical.
\end{remark}

We now present an other way to build this endomorphism $T_s$ to get new properties about.

\begin{prop}\label{alternative} For $s\in[0,r]$, we denote $\mathcal{O}^s$ the orbit of the diagonal action of $G$ on $[0,n-1]\times [0,n-1]$ which contained $\mathcal{O}_0^s\times\{0\}$. The $(\mathcal{O}^0,\dots,\mathcal{O}^r)$ are orbits of this action and for all $s\in[0,r]$ we have
$$[T_s]_{i,j}=\delta_{(i,j)\in\mathcal{O}^s}$$
\end{prop}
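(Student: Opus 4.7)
The plan is to handle the two assertions in sequence, first identifying the orbits of the diagonal action, then matching up the matrix coefficients of $T_s$ with the indicator function of $\mathcal{O}^s$.

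First I would show that every pair $(i,j)\in[0,n-1]^2$ lies in some $\mathcal{O}^s$. Since $G$ is transitive, pick $\tau\in G$ with $\tau(j)=0$; then $\tau\cdot(i,j)=(\tau(i),0)$, and since $(\mathcal{O}_0^s)_{0\le s\le r}$ partitions $[0,n-1]$ we have $\tau(i)\in\mathcal{O}_0^{s}$ for a unique $s$, so $(i,j)$ lies in the orbit of $(\tau(i),0)\in\mathcal{O}_0^s\times\{0\}$, which by definition is $\mathcal{O}^s$. For distinctness, if $\mathcal{O}^{s_1}=\mathcal{O}^{s_2}$ then picking any $k\in\mathcal{O}_0^{s_1}$ we have $(k,0)\in\mathcal{O}^{s_2}$, hence there is $\sigma\in G$ and $k'\in\mathcal{O}_0^{s_2}$ with $\sigma(k',0)=(k,0)$; in particular $\sigma\in G_0$ and $\sigma(k')=k$, so $k\in\mathcal{O}_0^{s_2}$ and $\mathcal{O}_0^{s_1}\cap\mathcal{O}_0^{s_2}\ne\emptyset$, forcing $s_1=s_2$. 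This establishes that $(\mathcal{O}^0,\dots,\mathcal{O}^r)$ is the list of orbits.

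For the formula, by definition of $T_s$ one has $[T_s]_{i,j}=1$ iff $i\in\mathcal{O}_j^s$, so it suffices to prove the equivalence $i\in\mathcal{O}_j^s\Leftrightarrow(i,j)\in\mathcal{O}^s$. If $i\in\mathcal{O}_j^s=\sigma_j(\mathcal{O}_0^s)$, write $i=\sigma_j(k)$ with $k\in\mathcal{O}_0^s$; then $\sigma_j\cdot(k,0)=(i,j)$ and $(k,0)\in\mathcal{O}_0^s\times\{0\}\subset\mathcal{O}^s$, hence $(i,j)\in\mathcal{O}^s$. Conversely, if $(i,j)\in\mathcal{O}^s$, there exist $\sigma\in G$ and $k\in\mathcal{O}_0^s$ with $\sigma(k,0)=(i,j)$, so $\sigma(0)=j$; by Proposition \ref{D1}(ii) the set $\mathcal{O}_j^s$ does not depend on the choice of element of $G$ sending $0$ to $j$, hence $\mathcal{O}_j^s=\sigma(\mathcal{O}_0^s)$, and $i=\sigma(k)\in\mathcal{O}_j^s$.

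The argument is essentially bookkeeping; the only real subtlety is invoking Proposition \ref{D1}(ii) at the right moment so that one may use an arbitrary element $\sigma\in G$ with $\sigma(0)=j$ in place of the distinguished $\sigma_j$. This is also the step where the whole construction becomes visibly canonical: the orbits $\mathcal{O}^s\subset[0,n-1]^2$ make no reference to the chosen $\sigma_i$, which confirms the earlier remark that $\mathcal{T}^G$ itself is canonical even though the individual $T_s$ depended on labeling choices.
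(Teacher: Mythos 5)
Your proof is correct and follows essentially the same route as the paper's: reduce to the base point via transitivity, use the independence of the choice of $\sigma_j$ (Proposition \ref{D1}(ii)) to get the equivalence $(i,j)\in\mathcal{O}^s\Leftrightarrow i\in\mathcal{O}_j^s$, and read off the matrix coefficients. You are somewhat more careful than the paper, which omits the explicit check that the $\mathcal{O}^s$ are pairwise distinct, but this is filling in detail rather than a different argument.
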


\begin{proof}We first prove that all the orbits are of the form: $\mathcal{O}^s$.\smallskip

 Indeed if $(i,j)\in[0,n-1]\times[0,n-1]$ then there exists  $s\in[0,r]$ such that $i\in\mathcal{O}_j^s$. 

Let $\sigma\in G$ such that $\sigma(0)=j$. We obtain
$i\in\sigma(\mathcal{O}_0^s)$, so
$$(i,j)\in\sigma(\mathcal{O}_0^s\times\{0\})$$
which mean $(i,j)\in\mathcal{O}^s$. It gives also
$$(i,j)\in\mathcal{O}^s\Longleftrightarrow i\in\mathcal{O}_j^s$$

And by definition we have 
$$[T_s]_{i,j}=\delta_{i\in\mathcal{O}_j^s}$$

 as required.
\end{proof}

This description exactly correspond to a coherent configuration as in \cite{David}.

\begin{lem}\label{l1}If $\sigma\in G\subset\mathbb{S}_n$ then,
$$P_{\sigma}T_s=T_sP_{\sigma}$$
\end{lem}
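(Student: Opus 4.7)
The plan is to prove the lemma by direct computation on basis vectors, using the equivariance property $\sigma(\mathcal{O}_j^s) = \mathcal{O}_{\sigma(j)}^s$ from Proposition \ref{D1}(iii). The point is that $T_s$ was built out of the orbits $\mathcal{O}_j^s$ precisely so that the action of $G$ permutes these orbits according to how it permutes their indices, which is exactly what intertwining with $P_\sigma$ means.

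Concretely, I would evaluate both sides on the canonical basis vector $e_j$. On the one hand, using $P_\sigma(e_k) = e_{\sigma(k)}$,
\[
P_\sigma T_s(e_j) = P_\sigma\!\left(\sum_{i\in\mathcal{O}_j^s} e_i\right) = \sum_{i\in\mathcal{O}_j^s} e_{\sigma(i)} = \sum_{i'\in\sigma(\mathcal{O}_j^s)} e_{i'}.
\]
On the other hand,
\[
T_s P_\sigma(e_j) = T_s(e_{\sigma(j)}) = \sum_{i'\in \mathcal{O}_{\sigma(j)}^s} e_{i'}.
\]
By Proposition \ref{D1}(iii) we have $\sigma(\mathcal{O}_j^s) = \mathcal{O}_{\sigma(j)}^s$, so the two sums coincide, and the equality $P_\sigma T_s = T_s P_\sigma$ follows on basis elements, hence on all of $\C^n$.

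Alternatively one could work at the level of matrix entries using Proposition \ref{alternative}: $[T_s]_{i,j} = \delta_{(i,j)\in \mathcal{O}^s}$, so $[P_\sigma T_s P_\sigma^{-1}]_{i,j} = [T_s]_{\sigma^{-1}(i),\sigma^{-1}(j)} = \delta_{(\sigma^{-1}(i),\sigma^{-1}(j))\in\mathcal{O}^s} = \delta_{(i,j)\in\mathcal{O}^s}$, where the last equality uses that $\mathcal{O}^s$ is an orbit of the diagonal $G$-action. Either route is essentially a one-line calculation; there is no real obstacle since all the combinatorial content has already been packaged into Proposition \ref{D1}(iii) (respectively Proposition \ref{alternative}). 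The only thing to be careful about is the convention for $P_\sigma$ (i.e.\ $P_\sigma e_j = e_{\sigma(j)}$), which must be consistent with how the coaction $\alpha(e_i) = \sum_j e_j \otimes u_{ji}$ was set up earlier, but this is a bookkeeping matter rather than a genuine difficulty.
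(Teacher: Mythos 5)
Your proof is correct and follows exactly the same route as the paper: evaluate both sides on a basis vector $e_j$ and invoke the equivariance $\sigma(\mathcal{O}_j^s)=\mathcal{O}_{\sigma(j)}^s$ from Proposition \ref{D1}(iii). The alternative matrix-entry argument you sketch is a valid reformulation but adds nothing essential beyond the paper's one-line computation.
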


\begin{proof} We recall that by definition $P_{\sigma}(e_i)=e_{\sigma(i)}$. 

Let $s\in[0,r]$ and $j\in[0,n-1]$:
$$P_{\sigma}T_s(e_j)=\sum_{i\in\mathcal{O}_j^s}{e_{\sigma(i)}}=\sum_{k\in\sigma(\mathcal{O}_j^s)}{e_{k}}=\sum_{k\in\mathcal{O}_{\sigma(j)}^s}{e_{k}}=T_s(e_{\sigma(j)})=T_sP_{\sigma}(e_j)$$

so we get $P_{\sigma}T_s=T_sP_{\sigma}.$
\end{proof}

\begin{lem}\label{l2}For $f\in\Hom(\C^n,\C^n)$ the following propositions are equivalent:
\begin{enumerate}
\item $\forall\sigma\in G,\ P_{\sigma}f=fP_{\sigma}$
\item  $\forall\sigma\in G,\forall i,j\in [0,n-1],\ \  a_{i,j}=a_{\sigma(i),\sigma(j)}$ where the $a_{i,j}$ are coefficients of the matrix of $f$ in the canonical basis $(e_0,\dots, e_{n-1})$.
\item $f\in\mathcal{T}^G$
\end{enumerate}
\end{lem}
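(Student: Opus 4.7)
The plan is to prove the cycle of implications $(3)\Rightarrow (1)\Rightarrow (2)\Rightarrow (3)$, relying on Lemma \ref{l1} and Proposition \ref{alternative}.

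The implication $(3)\Rightarrow (1)$ is essentially free: by linearity it suffices to treat $f=T_s$, and this is exactly Lemma \ref{l1}.

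For $(1)\Leftrightarrow (2)$ I would do a direct matrix computation. Since $P_\sigma(e_i)=e_{\sigma(i)}$, the entries of $P_\sigma$ are $(P_\sigma)_{k,i}=\delta_{k,\sigma(i)}$. A short calculation then gives
$$(P_\sigma f)_{k,j}=a_{\sigma^{-1}(k),j}\ \ \text{and}\ \ (fP_\sigma)_{k,j}=a_{k,\sigma(j)},$$
so $P_\sigma f=fP_\sigma$ is equivalent to $a_{\sigma^{-1}(k),j}=a_{k,\sigma(j)}$ for all $k,j$. Setting $i=\sigma^{-1}(k)$ this becomes $a_{i,j}=a_{\sigma(i),\sigma(j)}$, which is $(2)$. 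Conversely $(2)$ gives back $(1)$ by reversing the substitution, using that $(2)$ applied to $\sigma$ and to $\sigma^{-1}$ are equivalent.

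The main content is $(2)\Rightarrow (3)$, and this is where Proposition \ref{alternative} does the work. Condition $(2)$ says exactly that the map $(i,j)\mapsto a_{i,j}$ is constant on each orbit of the diagonal action of $G$ on $[0,n-1]\times[0,n-1]$. Since these orbits are precisely $\mathcal{O}^0,\dots,\mathcal{O}^r$ by Proposition \ref{alternative}, I can pick for each $s$ a scalar $\lambda_s$ equal to the common value of $a_{i,j}$ on $\mathcal{O}^s$, and write
$$a_{i,j}=\sum_{s=0}^{r}\lambda_s\,\delta_{(i,j)\in\mathcal{O}^s}=\sum_{s=0}^{r}\lambda_s\,[T_s]_{i,j}.$$
Hence $f=\sum_s\lambda_s T_s\in\mathcal{T}^G$, which closes the loop. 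There is no real obstacle: the only subtle point is correctly translating $(1)$ into a statement about matrix entries (getting the $\sigma$ versus $\sigma^{-1}$ right), after which Proposition \ref{alternative} makes the recognition of $f$ as a linear combination of the $T_s$ immediate.
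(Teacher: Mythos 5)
Your proposal is correct, and for $(3)\Rightarrow(1)$ and $(1)\Leftrightarrow(2)$ it coincides with the paper's argument (same reduction to Lemma \ref{l1}, same entrywise computation with the $\sigma$ versus $\sigma^{-1}$ bookkeeping). The one place you genuinely diverge is $(2)\Rightarrow(3)$: you read condition $(2)$ as saying that $(i,j)\mapsto a_{i,j}$ is constant on the orbits of the diagonal action and then invoke Proposition \ref{alternative} to identify these orbits with the supports of the $T_s$, so that $f=\sum_s\lambda_s T_s$ drops out immediately. The paper's own proof of this implication is more hands-on: it first uses elements of $G_0$ to show that the entries of the column $f(e_0)$ are constant on each $G_0$-orbit $\mathcal{O}_0^s$, writes $f(e_0)=\sum_s a_{s,0}T_s(e_0)$, and then propagates this identity to every other column $f(e_i)$ using transitivity of $G$ together with Lemma \ref{l1}. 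The paper does record your shortcut in the remark immediately following its proof, so the author was aware of it; your version is shorter and makes the role of the coherent-configuration structure (Proposition \ref{alternative}) explicit, while the paper's version is self-contained at the level of columns and does not need the orbit description of the matrices $T_s$. Either way the argument is complete.
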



\begin{proof} $(1)\Leftrightarrow (2)$:
\begin{eqnarray*} 
\forall\sigma\in G,\ P_{\sigma}f=fP_{\sigma}&\Leftrightarrow& \forall\sigma\in G,\forall i,j\in [0,n-1],\ \ [P_{\sigma}f]_{i,j}=[fP_{\sigma}]_{i,j}\\
&\Leftrightarrow& \forall\sigma\in G,\forall i,j\in [0,n-1],\ \ a_{\sigma^{-1}(i),j}=a_{i,\sigma(j)  } \\
&\Leftrightarrow&   \forall\sigma\in G,\forall i,j\in [0,n-1],\ \  a_{i,j}=a_{\sigma(i),\sigma(j)}
\end{eqnarray*}

$(3)\Rightarrow (1)$: Come quickly from lemma \ref{l1}

$(2)\Rightarrow (3)$: Let $s\in[0,r]$ and $j_1\ne j_2\in \mathcal{O}_0^s$. 

Then there exists $\sigma\in G_0$ such that $\sigma(j_1)=j_2$. And by assumptions we have
$$  a_{j_1,0}=a_{\sigma(j_1),\sigma(0)}=a_{j_2,0}:=a_{s,0}$$
and then
$$f(e_0)=\sum_{s=0}^r{a_{s,0}T_s(e_0)}$$

Let $i\in[1,n-1]$. As $G$ act transitively it exists $\sigma\in G$ such that $\sigma(0)=i$. Hence
$$fP_{\sigma}(e_0)=f(e_i)$$
$$\text{so}\ \ P_{\sigma}f(e_0)=\sum_{s=0}^r{a_{s,0}P_{\sigma}T_s(e_0)}
=\sum_{s=0}^r{a_{s,0}T_s(e_{i})}$$
 by lemma \ref{l1}. Which mean
$$f(e_i)=\sum_{s=0}^r{a_{s,0}T_s(e_{i})}$$
hence $$f=\sum_{s=0}^r{a_{s,0}T_s}\in\mathcal{T}^G$$
\end{proof}

\begin{remark} For  $(2)\Leftrightarrow (3)$ we also can see that $(2)$ mean that the function $(i,j)\mapsto a_{i,j}$ is constant over the orbits $\mathcal{O}^s$, which mean $f\in\mathcal{T}^G$ by using proposition \ref{alternative}.
\end{remark}

From now $X$ will denote a finite vertex-transitive graph. The set of vertices of $X$ will be denoted by $[0,n-1]$ and its adjacency matrix by $d_X$ in the canonical basis $(e_0,\dots,e_{n-1})$ of $\C^n$. Then, $G=\Aut(X)$ satisfy all the assumptions to define
 $$\Aut_i(X)=G_i \ \ \text{ and } \ \ \TX:=\mathcal{T}^{\Aut(X)}$$

\begin{remark} Lemma \ref{l2} show that $\TX$ is stable by composition and that $d_X\in\TX$ so we have: 
$$ P_{\sigma}d_X=d_XP_{\sigma}\Leftrightarrow\sigma\in\Aut(X)\Leftrightarrow \forall s\in[0,r],\  P_{\sigma}T_s=T_sP_{\sigma}$$

Moreover the definition of $\TX$ only depend on $\Aut(X)$ so
$$\TX=\mathcal{T}^{X^c}$$ since $\Aut(X)=\Aut(X^c)$.
\end{remark}



To resume, for this automorphism group of a vertex-transitive graph, we have this proposition.

\begin{prop}\label{TannaClassique} If $X$ is a vertex-transitive finite graph, then
$$C_{\Aut(X)}=\langle U, M, S, d_X \rangle_{+,\circ,\otimes,*}$$
$$ C_{\Aut(X)}(1,1)=\TX$$
\end{prop}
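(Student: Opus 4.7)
The proposition has two parts, and both reduce to material already built up in the paper; the real work is essentially already done in Proposition 3.3 (first equality) and in Lemma \ref{l2} (second equality).

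For the first equality $C_{\Aut(X)} = \langle U, M, S, d_X\rangle_{+,\circ,\otimes,*}$, the plan is to invoke the Tannaka-Krein-type theorem stated in Section 3: the assignment $G \mapsto C_G$ is a bijection between subgroups of $\mathbb{S}_n$ and symmetric tensor categories with duals sitting between $C_{\mathbb{S}_n}$ and $C_{\{1\}}$. Since we already have $C_{\mathbb{S}_n} = \langle U, M, S\rangle_{+,\circ,\otimes,*}$, it suffices to observe that $d_X \in C_{\Aut(X)}(1,1)$ (immediate from Proposition \ref{00}, which says $\sigma \in \Aut(X) \Leftrightarrow P_\sigma d_X = d_X P_\sigma$) and that the subgroup of $\mathbb{S}_n$ cut out by the additional generator $d_X$ is precisely $\Aut(X)$. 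This is exactly the argument already given for the earlier statement of Proposition \ref{TannaClassique}.

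For the second equality $C_{\Aut(X)}(1,1) = \TX$, the key observation is that vertex-transitivity of $X$ means $\Aut(X)$ is a transitive subgroup of $\mathbb{S}_n$, so $G = \Aut(X)$ satisfies the hypothesis of Lemma \ref{l2}. By definition,
\[
C_{\Aut(X)}(1,1) = \{f \in \Hom(\C^n,\C^n) \mid \forall \sigma \in \Aut(X),\ P_\sigma f = f P_\sigma\},
\]
which is precisely condition $(1)$ of Lemma \ref{l2}. The equivalence $(1) \Leftrightarrow (3)$ of that lemma then identifies this space with $\mathcal{T}^{\Aut(X)} = \TX$.

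There is no real obstacle: the proof is a one-line application of Lemma \ref{l2} (for the new part) together with a reference to the Tannaka-Krein argument (for the already-known first equality). If anything, the only thing worth noting explicitly is the mild verification that the transitivity hypothesis needed in Section 4 to define $\TX$ is supplied by the vertex-transitivity assumption on $X$, so that the apparatus of $G_i$-orbits $\mathcal{O}_i^s$ and the endomorphisms $T_s$ is indeed available for $G = \Aut(X)$.
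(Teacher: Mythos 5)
Your proposal is correct and follows essentially the same route as the paper: the first equality is just a recall of the earlier Proposition \ref{TannaClassique} (via the Tannaka--Krein picture and the observation that $d_X$ cuts out exactly $\Aut(X)$ inside $\mathbb{S}_n$), and the second equality is the equivalence $(1)\Leftrightarrow(3)$ of Lemma \ref{l2} applied to the transitive group $G=\Aut(X)$, for which $\mathcal{T}^G=\TX$ by definition. Your explicit remark that vertex-transitivity supplies the transitivity hypothesis needed to define $\TX$ is a useful clarification the paper leaves implicit.
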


\begin{proof}
The first equality is a recall from proposition \ref{TannaClassique} and the second is a direct consequence of Lemma \ref{l2} which say that $ C_{G}(1,1)=\TG$ and for $G=\Aut(X)$ we have $\TG=\TX$.
\end{proof}

In the following pages we want to study an equivalent of this intertwinner spaces for the quantum case.


\medskip

\section{The quantum case.}\label{Partdual}

\begin{defin}A $n$-quantum permutation algebra is a quotients $\mathcal{A}$ of 
$$A_s(n)=\langle u_{ij}, 1\le i,j\le n\mid u \text{ unitary matrix } \rangle$$
They correspond to quantum subgroups of $S_n+$.
\end{defin}

Always using the same notation of \cite{bs}, the intertwinner spaces are:

$$C_{\mathcal{A}}(k,l)=\{T\in C(k,l)\mid Tu^{\otimes k}=u^{\otimes l}T\}$$
where $u^{\otimes k}$ is the $n^k\times n^k$-matrix $(u_{i_1j_1}\dots u_{i_kj_k})_{i_1\dots i_kj_1\dots j_k}$.

\begin{remark}By Frobenius duality we still have:
$$C_{\mathcal{A}}(k,l)\simeq C_{\mathcal{A}}(0,k+l)$$
\end{remark}

The collection of the $C_{\mathcal{A}}(k,l)$ for $(k,l)\in\N\times\N$, denoted $C_{\mathcal{A}}$, is still a tensor category with duals in the sense of proposition \ref{p1}. The Tannaka-Krein duality is as follows.

\begin{theorem} Construction of $\mathcal{A}\to C_{\mathcal{A}}$, induce a bijection between the $n$-permutations quantum algebra and the tensor categories with duals, $\mathcal{C}$, satisfying
$$C_{A_s(n)}\subset\mathcal{C}\subset C_{\{1\}}$$
\end{theorem}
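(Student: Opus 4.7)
The plan is to deduce the theorem from Woronowicz's general Tannaka--Krein duality \cite{Woro3} by checking that the condition $C_{A_s(n)}\subset\mathcal{C}$ is exactly what is needed to cut out quantum permutation algebras among all Woronowicz algebras.

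First I would verify that the map $\mathcal{A}\mapsto C_{\mathcal{A}}$ is well defined with image in the announced class. The fact that $C_{\mathcal{A}}$ is a tensor category with duals is the direct analogue of Proposition \ref{p1}, and its proof carries over verbatim since the defining conditions only use the comultiplication and the multiplicativity of $\otimes^k$. The inclusion $C_{\mathcal{A}}\subset C_{\{1\}}$ is trivial because $C_{\{1\}}(k,l)=C(k,l)$ imposes no condition. For the lower inclusion $C_{A_s(n)}\subset C_{\mathcal{A}}$, the surjection $A_s(n)\twoheadrightarrow\mathcal{A}$ sends the fundamental corepresentation of $A_s(n)$ to that of $\mathcal{A}$, so any $T$ intertwining the former automatically intertwines the latter.

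Next, injectivity of $\mathcal{A}\mapsto C_{\mathcal{A}}$ follows from Woronowicz's reconstruction theorem: a Woronowicz algebra generated by the coefficients of a single unitary matrix $u$ is determined by the collection of intertwinner spaces $\{C_{\mathcal{A}}(k,l)\}$ of the tensor powers of $u$. For surjectivity, given $\mathcal{C}$ with $C_{A_s(n)}\subset\mathcal{C}\subset C_{\{1\}}$, Woronowicz's Tannaka--Krein theorem produces a Woronowicz algebra $\mathcal{A}$ and an $n$-dimensional corepresentation $u$ such that $C_{\mathcal{A}}=\mathcal{C}$. It remains to show that $\mathcal{A}$ is a quotient of $A_s(n)$, i.e.\ that the generators $u_{ij}$ satisfy the magic unitary relations. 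But this is exactly the translation already carried out in Section 3: the conditions $U,U^*\in\mathcal{C}(0,1)$, $M,M^*\in\mathcal{C}(2,1)$ are equivalent respectively to rows/columns summing to $1$ and to entries being orthogonal projections along each row and column. Since $U,M\in C_{A_s(n)}\subset\mathcal{C}$ by hypothesis, $u$ is magic unitary, so by universality $\mathcal{A}$ is a quotient of $A_s(n)$.

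The hard part would normally be the surjectivity step, which rests entirely on Woronowicz's reconstruction from \cite{Woro3}; assuming this as a black box, the only genuine verification is the translation between the algebraic relations i, ii defining the magic unitary and the categorical membership $U,M\in\mathcal{C}$. This translation is the same one used in the classical case (after passing from commuting scalars to noncommuting operators, condition iii simply drops out, which is why we do \emph{not} require $S\in\mathcal{C}$), so no new argument is needed and the theorem follows.
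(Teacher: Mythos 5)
The paper states this theorem without proof, presenting it as an instance of Woronowicz's Tannaka--Krein duality \cite{Woro3} in the spirit of \cite{bs}, and your sketch is exactly the standard derivation that this citation presupposes: reconstruction plus the observation that $U,M\in\mathcal{C}$ forces the magic unitary relations, so the reconstructed algebra is a quotient of $A_s(n)$. Your proposal is correct and takes essentially the same route as the paper (with the reconstruction step rightly treated as a black box, and with the usual caveat, acknowledged by the paper itself, that one works at the level of the underlying Hopf $*$-algebra since several $C^*$-completions may exist).
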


Similary to the classical case we have:
$$C_{A_s(n)}=\langle U, M \rangle_{+,\circ,\otimes,*}$$
 
This result appears first in \cite{Sygeco} and more recently in \cite{bc}. In \cite{bs}, we can fine a another classical description of $C_{A_s(n)}(k,l)$, using non crossing partition of $[0,n-1]$.

 First, we can check that,
$$C_{A_s(n)}(0,1)=\Vect(e),\ \ \text{where }e=\sum_{i=0}^{n-1}{e_i}$$
$$C_{A_s(n)}(1,1)=\Vect(\Id,J)$$
where $J$ is the matrix whose all entries are equal to one.
 
\begin{lem}\label{TG112} Let $\mathcal{A}$ be an $n$-permutation quantum algebra and $G^0\subset\mathbb{S}_n$ the permutation group such that:
$$com(\mathcal{A})=C({G^0})$$
where $com(\mathcal{A})$ is the abelianised algebra of $\mathcal{A}$.
Then
$$C_{\mathcal{A}}\subset C_{G^0}$$
and in particular $$C_{\mathcal{A}}(1,1)\subset \mathcal{T}^{G_0}$$
\end{lem}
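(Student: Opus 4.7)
The plan is to push the intertwining relation in $\mathcal{A}$ down to $\mathrm{com}(\mathcal{A})=C(G^0)$ and then evaluate at the points of $G^0$. Let $\pi:\mathcal{A}\to C(G^0)$ denote the abelianisation quotient; it is a surjective Hopf $*$-algebra morphism sending the fundamental magic unitary $u$ of $\mathcal{A}$ to the fundamental magic unitary $v$ of $C(G^0)$. By Gelfand duality the definition of $G^0$ is precisely such that, for every $\sigma\in G^0$, the evaluation character $\mathrm{ev}_\sigma:C(G^0)\to\C$ satisfies $(\mathrm{ev}_\sigma(v_{ij}))_{i,j}=P_\sigma$.

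First I would fix $(k,l)\in\N\times\N$ and $T\in C_{\mathcal{A}}(k,l)$, i.e. $Tu^{\otimes k}=u^{\otimes l}T$ as matrices with entries in $\mathcal{A}$. Applying $\pi$ entry-wise yields $Tv^{\otimes k}=v^{\otimes l}T$ as matrices over $C(G^0)$. Then, for an arbitrary $\sigma\in G^0$, applying $\mathrm{ev}_\sigma$ entry-wise yields
$$T P_\sigma^{\otimes k}= P_\sigma^{\otimes l} T.$$
Since this holds for every $\sigma\in G^0$, one has $T\in C_{G^0}(k,l)$, giving $C_{\mathcal{A}}(k,l)\subset C_{G^0}(k,l)$ for all $(k,l)$, hence $C_{\mathcal{A}}\subset C_{G^0}$.

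For the second statement, specialize to $(k,l)=(1,1)$ and invoke the identification $C_{G^0}(1,1)=\mathcal{T}^{G^0}$ provided by Lemma \ref{l2} (equivalence $(1)\Leftrightarrow(3)$, applied to the transitive group $G^0$, which is the situation of interest in the sequel). The inclusion $C_{\mathcal{A}}(1,1)\subset\mathcal{T}^{G^0}$ follows immediately.

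The argument is essentially routine functoriality, and there is no real obstacle. The only point requiring a little care is to check that $\pi$ really does send $u$ to the canonical magic unitary of $C(G^0)$, so that evaluating at $\sigma$ recovers the permutation matrix $P_\sigma$ in the convention used throughout; this is immediate from the fact that $G^0$ is defined precisely so that $\mathrm{com}(\mathcal{A})\simeq C(G^0)$ as quantum permutation algebras.
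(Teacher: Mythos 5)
Your proof is correct, and it takes a slightly different (and more self-contained) route than the paper. The paper's own proof is a one-line appeal to the Tannakian dictionary: abelianising $\mathcal{A}$ corresponds to adjoining the swap $S$ to the generators of the tensor category, so $C_{G^0}=C_{\mathrm{com}(\mathcal{A})}=\langle C_{\mathcal{A}},S\rangle\supset C_{\mathcal{A}}$. You instead verify the inclusion directly: push the relation $Tu^{\otimes k}=u^{\otimes l}T$ through the abelianisation quotient $\pi$ and then through each evaluation character $\mathrm{ev}_\sigma$, obtaining $TP_\sigma^{\otimes k}=P_\sigma^{\otimes l}T$ for every $\sigma\in G^0$. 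Both arguments are instances of the same general principle (a surjection of quantum permutation algebras reverses the inclusion of intertwiner categories), but yours has the advantage of not presupposing the Tannaka--Krein correspondence or the generator description of $C_{\mathrm{com}(\mathcal{A})}$, at the cost of being longer; you also correctly isolate the only delicate point, namely that $\pi$ carries the fundamental magic unitary of $\mathcal{A}$ to the canonical one of $C(G^0)$ so that $\mathrm{ev}_\sigma(v)=P_\sigma$ in the paper's convention. Your remark that Lemma~\ref{l2} requires $G^0$ to be transitive for $\mathcal{T}^{G^0}$ to be defined is a caveat the paper leaves implicit, and it is worth keeping.
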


\begin{proof} This result is a direct consequence of the definition. The abelianisation correspond to the add of $S$ in the generators of $C_{\mathcal{A}}$.
\end{proof}

For $A(X)$ we have $com(A(X))=C(\Aut(X))$ so $G^0=\Aut(X)$ and this lemma tell us that
$$C_{A(X)}(1,1)\subset\TX$$

We obtain the natural generalisation of the proposition \ref{TannaClassique} for $A(X)$ in the quantum case.

\begin{prop}\label{TannaQ}For every finite graph $X$ we have:
$$C_{A(X)}=\langle U, M, d_X \rangle_{+,\circ,\otimes,*}$$

\end{prop}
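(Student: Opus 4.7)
The plan is to mirror the classical proof of Proposition 4.5 (the TannaClassique statement in Section 4), but without the flip $S$, exploiting the quantum Tannaka--Krein bijection of Theorem 5.3 together with the universal property of $A(X)$.

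First I would establish the inclusion $\langle U, M, d_X\rangle_{+,\circ,\otimes,*}\subset C_{A(X)}$. The surjection $A_s(n)\twoheadrightarrow A(X)$ makes $A(X)$ an $n$-quantum permutation algebra, so $C_{A_s(n)}\subset C_{A(X)}$; using the recalled fact $C_{A_s(n)}=\langle U,M\rangle_{+,\circ,\otimes,*}$, this gives $U,M\in C_{A(X)}$. For $d_X$ itself, the defining relation $d_Xu=ud_X$ that is imposed in $A(X)$ is precisely the statement $d_X\in C_{A(X)}(1,1)$. Since $C_{A(X)}$ is a tensor category with duals in the sense of Proposition 3.3, it is already closed under $+,\circ,\otimes,*$, and the inclusion follows.

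The reverse inclusion $C_{A(X)}\subset \langle U,M,d_X\rangle_{+,\circ,\otimes,*}$ is where Tannaka--Krein does the work. Set $\mathcal{C}:=\langle U,M,d_X\rangle_{+,\circ,\otimes,*}$. This is a tensor category with duals satisfying $C_{A_s(n)}\subset \mathcal{C}\subset C_{\{1\}}$, so by the quantum Tannaka--Krein bijection there exists a unique $n$-quantum permutation algebra $\mathcal{A}$ with $C_{\mathcal{A}}=\mathcal{C}$. Since $d_X\in C_{\mathcal{A}}(1,1)$, the generator matrix $u_{\mathcal{A}}$ of $\mathcal{A}$ satisfies $d_Xu_{\mathcal{A}}=u_{\mathcal{A}}d_X$. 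By the universal property of $A(X)=A_s(n)/\langle d_Xu=ud_X\rangle$, this forces a surjective Woronowicz algebra morphism $A(X)\twoheadrightarrow \mathcal{A}$. A surjection of quantum permutation algebras enlarges the intertwiner category, hence $C_{A(X)}\subset C_{\mathcal{A}}=\mathcal{C}$, which is the desired inclusion.

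There is no substantive obstacle beyond tracking the direction of the Tannaka--Krein correspondence correctly: quotients of algebras correspond to larger categories of intertwiners, which is dual to the usual classical rule that subgroups have more intertwiners. Once this is kept straight, the whole argument is a clean two-step sandwich, and the absence of the flip $S$ (the only ingredient absent compared to the classical statement of Proposition 4.5) is exactly what records the failure of commutativity of the $u_{ij}$ in the quantum setting.
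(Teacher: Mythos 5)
Your proposal is correct and follows essentially the same route as the paper: both rest on the quantum Tannaka--Krein bijection, identifying the defining relations of $A(X)$ (magic unitarity plus $d_Xu=ud_X$) with the generators $U$, $M$, $d_X$ of the intertwinner category. The paper compresses this into the one-line equivalences $(i)\Leftrightarrow U$, $(ii)\Leftrightarrow M$, $(iii)\Leftrightarrow d_X$, whereas you spell out the two inclusions and the universal-property argument that the paper leaves implicit; no substantive difference.
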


\begin{proof}  $A(X)$ is defined by the magic unitary matrix characterized by the following relations :
\begin{enumerate}[i.]
\item $\ds\sum_{k=0}^{n-1}{u_{ik}}=\ds\sum_{k=0}^{n-1}{u_{kj}}=1$
\item $u_{ij_1}u_{ij_2}=\delta_{j_1,j_2}u_{ij_1}$ et $u_{i_1j}u_{i_2j}=\delta_{i_1,i_2}u_{i_1j}$
\item $ud_X=d_Xu$
\end{enumerate}
and as
$$ (i)\Leftrightarrow U\in C_{A(X)}(0,1)\ \ \ \ \  (ii)\Leftrightarrow M\in C_{A(X)}(2,1)\ \ \ \ \  (iii)\Leftrightarrow d_X\in C_{A(X)}(1,1)$$
it ends the proof.
\end{proof}

\begin{cor}\label{CorS} $X$ has no quantum symmetry if and only if $S\in C_{A(X)}$.
\end{cor}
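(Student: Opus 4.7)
The plan is to unpack what the condition $S \in C_{A(X)}$ means entry-by-entry and match it with the commutativity of $A(X)$, then invoke the standard characterization that $X$ has no quantum symmetry iff $A(X)$ is commutative.

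First I would note that $S \in C_{A(X)}$ really means $S \in C_{A(X)}(2,2)$, i.e., $S u^{\otimes 2} = u^{\otimes 2} S$. Computing both sides at the matrix entry indexed by $((i_1,i_2),(j_1,j_2))$, using $[u^{\otimes 2}]_{(a,b),(c,d)} = u_{ac}u_{bd}$, gives
\[
[Su^{\otimes 2}]_{(i_1,i_2),(j_1,j_2)} = u_{i_2 j_1} u_{i_1 j_2}, \qquad [u^{\otimes 2}S]_{(i_1,i_2),(j_1,j_2)} = u_{i_1 j_2} u_{i_2 j_1}.
\]
Hence $S \in C_{A(X)}$ is equivalent to $u_{i_2 j_1} u_{i_1 j_2} = u_{i_1 j_2} u_{i_2 j_1}$ for all $i_1,i_2,j_1,j_2$, i.e.\ to the pairwise commutation of all generators $u_{ij}$ of $A(X)$. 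This is exactly the argument already recorded in the classical case for property (iii), and it transfers verbatim here.

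Second, since the $u_{ij}$ generate $A(X)$ as a $*$-algebra, their pairwise commutation is equivalent to $A(X)$ being commutative. Now $C(\mathrm{Aut}(X))$ is the abelianization of $A(X)$ (it is the universal commutative quotient, as it is the quotient by the commutator ideal appearing in the diagram after the definition of $\mathbb{G}_X$). Consequently the canonical surjection $A(X) \twoheadrightarrow C(\mathrm{Aut}(X))$ is an isomorphism precisely when $A(X)$ is already commutative; this is the definition of $X$ having no quantum symmetry.

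Combining the two equivalences yields
\[
S \in C_{A(X)} \;\Longleftrightarrow\; A(X) \text{ commutative} \;\Longleftrightarrow\; A(X) = C(\mathrm{Aut}(X)) \;\Longleftrightarrow\; X \text{ has no quantum symmetry},
\]
which is the claim. There is no real obstacle here: the only point that requires a little care is the entry-wise computation showing that $S \in C_{A(X)}(2,2)$ encodes exactly the commutation relations among the generators, and that computation is immediate from the formula for $u^{\otimes 2}$.
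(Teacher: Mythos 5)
Your proof is correct and follows essentially the same route as the paper's own argument: the paper's proof also observes that $S\in C_{A(X)}$ encodes exactly the pairwise commutation of the generators $u_{ij}$, hence the commutativity of $A(X)$, which is the definition of having no quantum symmetry. (The paper additionally records a one-line Tannaka--Krein variant, comparing the generating sets $\langle U,M,d_X\rangle$ and $\langle U,M,S,d_X\rangle$ of $C_{A(X)}$ and $C_{\mathrm{Aut}(X)}$, but your entry-wise computation is the same argument it gives in its second paragraph, just written out in full.)
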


\begin{proof} We know that $X$ has no quantum symmetry if and only if  $C_{A(X)}=C_{\Aut(X)}$ but $C_{A(X)}=\langle U, M, d_X \rangle_{+,\circ,\otimes,*}$ and $C_{\Aut(X)}=\langle U, M, S, d_X \rangle_{+,\circ,\otimes,*}$ so it works.

We can also check that $S\in C_{A(X)}$ mean that the $u_{i,j}$ (who spanned $A(X)$) commute hence by definition $X$ has no quantum symmetry if and only if $A(X)$ is commutative.
\end{proof}

Now we want to understand better $C_{A(X)}(1,1)$ using the space $\TX$ as in the classical case. For that, we use the following Hadamard product on $\Hom(\C^n,\C^n)$.

\begin{defin} Let $f,g\in \Hom(\C^n,\C^n)$, we denote $f_\times g\in\Hom(\C^n,\C^n)$ the Hadamard product  of $f$ and $g$ define by
$$f_\times g=M\circ(f\otimes g)\circ M^*$$
\end{defin}

\begin{prop}\label{PropProduit}  This product satisfy the following properties:
\begin{enumerate}
\item If $f,g\in C_{A(X)}(1,1)$ then $f_\times g\in C_{A(X)}(1,1)$.
\item Let $f,g\in \Hom(\C^n,\C^n)$, written on the matrix-form $f=(a_{ij})$ and $g=(b_{ij})$. Then we have $f_{\times} g=(a_{ij}b_{ij})$.
\item If $f,g\in\TX$ are written on the form $$f=\sum_{i=0}^r{a_i^fT_i},\ \ \ \ g=\sum_{i=0}^r{a_i^gT_i}$$
then we have
$$f_\times g=\sum_{i=0}^r{a_i^fa_i^gT_i}\in\TX$$
which mean $a_i^{f_\times g}=a_i^fa_i^g$.
\end{enumerate}
\end{prop}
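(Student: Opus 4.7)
The plan is to handle the three assertions in turn, each reducing to a standard tool already developed in the excerpt.

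For (1), I would appeal purely to the tensor category structure in Proposition \ref{p1}. Since $C_{A_s(n)} = \langle U, M \rangle_{+,\circ,\otimes,*}$ and $A(X)$ is a quotient of $A_s(n)$, we have $C_{A_s(n)} \subset C_{A(X)}$, so $M \in C_{A(X)}(2,1)$ and $M^* \in C_{A(X)}(1,2)$. Given $f, g \in C_{A(X)}(1,1)$, property (2) of Proposition \ref{p1} yields $f \otimes g \in C_{A(X)}(2,2)$, and then two applications of property (3) give $M \circ (f \otimes g) \circ M^* \in C_{A(X)}(1,1)$.

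For (2), I would just compute. Writing $f(e_j) = \sum_i a_{ij} e_i$ and $g(e_j) = \sum_k b_{kj} e_k$, one evaluates
$$f_\times g(e_j) = M \circ (f\otimes g) \circ M^*(e_j) = M\bigl(f(e_j) \otimes g(e_j)\bigr) = M\Bigl(\sum_{i,k} a_{ij} b_{kj}\, e_i \otimes e_k\Bigr) = \sum_i a_{ij} b_{ij}\, e_i,$$
using $M^*(e_j) = e_j \otimes e_j$ and $M(e_i \otimes e_k) = \delta_{i,k} e_i$. Reading off the $(i,j)$-coefficient gives the entrywise product formula.

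For (3), I would combine (2) with Proposition \ref{alternative}. The orbits $(\mathcal{O}^0, \dots, \mathcal{O}^r)$ partition $[0,n-1]\times[0,n-1]$, and $[T_s]_{i,j} = \delta_{(i,j) \in \mathcal{O}^s}$, so the coefficient $a_s^f$ in the expansion $f = \sum_s a_s^f T_s$ is exactly the common value of $[f]_{i,j}$ for $(i,j) \in \mathcal{O}^s$, and similarly for $g$. By (2), $[f_\times g]_{i,j} = [f]_{i,j}[g]_{i,j} = a_s^f a_s^g$ whenever $(i,j) \in \mathcal{O}^s$, which is exactly $\sum_s a_s^f a_s^g\, T_s$ (so in particular $f_\times g \in \TX$), giving $a_s^{f_\times g} = a_s^f a_s^g$.

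No step is genuinely difficult; the only thing to be careful about is keeping the matrix-entry convention consistent (the one used implicitly in Lemma \ref{l2}), so that the Hadamard formula in (2) aligns with the $T_s$-decomposition used in (3).
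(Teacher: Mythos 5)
Your proof is correct and follows essentially the same route as the paper: (1) by stability of $C_{A(X)}$ under $\otimes$ and $\circ$ with $M\in C_{A(X)}(2,1)$, $M^*\in C_{A(X)}(1,2)$, and (2) by the same direct computation. For (3) the paper computes $M(T_{i_1}(e_j)\otimes T_{i_2}(e_j))$ directly from the disjointness of the orbits $\mathcal{O}_j^{i_1}\cap\mathcal{O}_j^{i_2}=\delta_{i_1,i_2}\mathcal{O}_j^{i_1}$, whereas you deduce it from (2) and Proposition \ref{alternative}; this is only a cosmetic repackaging of the same disjointness fact, so the two arguments coincide in substance.
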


\begin{proof} \

$(1)$: Trivial since $M,M^*\in C_{A(X)}(1,1)$ and $C_{A(X)}$ is stable by composition and tensor product.

$(2)$:
\begin{eqnarray*} 
f_{\times}g(e_j)=M(f(e_j)\otimes g(e_j))&=&  M\left(    \sum_{i_1=0}^{n-1}{a_{i_1,j}e_{i_1}}\otimes  \sum_{i_2=0}^{n-1}{b_{i_2j}e_{i_2}}\right)\\
&=&M\left(    \sum_{i_1,i_2=0}^{n-1}{a_{i_1,j}b_{i_2,j}(e_{i_1}\otimes e_{i_2})}\right)\\
&=& \sum_{i_1,i_2=0}^{n-1}{a_{i_1,j}b_{i_2,j}M(e_{i_1}\otimes e_{i_2})}\\
&=&\sum_{i=0}^{n-1}{a_{ij}b_{ij}e_i}\\
\end{eqnarray*}

which gives us the result.

$(3)$: When $f,g\in\TX$ we have this following calculation:
\begin{eqnarray*} 
f_{\times}g(e_j)&=&  M\left(    \sum_{i_1=0}^r{\alpha_{i_1}^fT_{i_1}(e_j)}\otimes  \sum_{i_2=0}^r{\alpha_{i_2}^gT_{i_2}(e_j)}\right)\\
&=&  \sum_{i_1,i_2=0}^r{  \alpha_{i_1}^f\alpha_{i_2}^g\left(   M(T_{i_1}(e_j) \otimes T_{i_2}(e_j)   \right) }  \\
&=&  \sum_{i=0}^q{  \alpha_{i}^f\alpha_i^gT_{i}(e_j)  }  \ \ \ \ \ \text{since}\ \ \mathcal{O}_{i_1}^j\cap \mathcal{O}_{i_2}^j=\delta_{i_1,i_2}\mathcal{O}_{i_1}^j\\
\end{eqnarray*}
So $f_\times g=\ds\sum_{i=0}^r{  \alpha_{i}^f\alpha_i^gT_{i}}$.\smallskip\smallskip
\end{proof}

\noindent This theorem make summarize the link between $\Aut(X)$, $A(X)$ and $\TX$.

\begin{theorem}\label{T1} Let $X$ be a vertex-transitive graph, then
$$C_{\Aut(X)}=\langle U, M, S, d_X\rangle_{+,\circ,\otimes,*}=\langle U, M, S, \mathcal{T}^X\rangle_{+,\circ,\otimes,*}:=\langle U, M, S, T_1,\dots, T_r\rangle_{+,\circ,\otimes,*}$$
$$C_{A(X)}=\langle U, M, d_X\rangle_{+,\circ,\otimes,*}=\langle U, M, \C[d_X]\rangle_{+,\circ,\otimes,*}$$

\end{theorem}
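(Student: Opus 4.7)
The plan is to verify each equality by appealing to the propositions already established in Sections 3, 4, and 5, combined with double-inclusion arguments for the chains. The only nontrivial content lies in bridging $d_X$ with $\TX$ on the classical side and with $\C[d_X]$ on the quantum side; everything else is a direct reference.

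For the classical case, the first equality $C_{\Aut(X)}=\langle U, M, S, d_X\rangle_{+,\circ,\otimes,*}$ is precisely Proposition \ref{TannaClassique}, so nothing new needs to be said. For the second equality, I would argue by double inclusion. The inclusion $\langle U, M, S, d_X\rangle \subset \langle U, M, S, \TX\rangle$ is immediate since $d_X \in \TX$: indeed, by the remark following Lemma \ref{l2}, $d_X$ commutes with every $P_\sigma$ for $\sigma \in \Aut(X)$, hence $d_X \in C_{\Aut(X)}(1,1) = \TX$. For the reverse inclusion, I would use Proposition \ref{TannaClassique} together with the identification $C_{\Aut(X)}(1,1) = \TX$ (also from Proposition \ref{TannaClassique}); since the $(1,1)$-piece of $\langle U, M, S, d_X\rangle_{+,\circ,\otimes,*}$ is $C_{\Aut(X)}(1,1) = \TX$, each generator $T_s$ lies in $\langle U, M, S, d_X\rangle_{+,\circ,\otimes,*}$. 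Finally, the last equality $\langle U, M, S, \TX\rangle = \langle U, M, S, T_1,\dots,T_r\rangle$ is definitional, since $\TX = \Vect(T_0,\dots,T_r)$ and tensor categories are closed under linear combinations (with $T_0 = \Id = M\circ M^*$ already present).

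For the quantum case, the first equality $C_{A(X)} = \langle U, M, d_X\rangle_{+,\circ,\otimes,*}$ is Proposition \ref{TannaQ}. For the second equality, I would again proceed by double inclusion. The inclusion $\langle U, M, d_X\rangle \subset \langle U, M, \C[d_X]\rangle$ is trivial since $d_X \in \C[d_X]$. For the reverse, it suffices to observe that $\C[d_X]$ is spanned by $\Id$ and by the powers $d_X^k$; the identity lives in $\langle U, M, d_X\rangle$ since $\Id = M \circ M^*$, while the power $d_X^k$ is the $k$-fold composition of $d_X$ with itself and therefore also lies in $\langle U, M, d_X\rangle_{+,\circ,\otimes,*}$.

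I do not expect a real obstacle here: the only step that is not pure bookkeeping is the identification $\TX = C_{\Aut(X)}(1,1)$, but this is already the content of Lemma \ref{l2}. The theorem is essentially a rephrasing of Propositions \ref{TannaClassique} and \ref{TannaQ} that makes the generating role of $\TX$ (classically) and $\C[d_X]$ (quantumly) explicit, and is meant to set up the comparison between the classical and quantum settings for the applications to follow.
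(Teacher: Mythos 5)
Your proposal is correct and follows essentially the same route as the paper: both inclusions of the classical chain come from $d_X\in\TX$ on one side and from Proposition \ref{TannaClassique} (i.e.\ $\TX=C_{\Aut(X)}(1,1)\subset C_{\Aut(X)}=\langle U,M,S,d_X\rangle$) on the other, and the quantum statement reduces to Proposition \ref{TannaQ}. The only difference is that you spell out the trivial equality $\langle U,M,d_X\rangle=\langle U,M,\C[d_X]\rangle$ via powers of $d_X$ and $\Id=M\circ M^*$, which the paper leaves implicit.
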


\begin{proof}
With proposition \ref{TannaClassique} we obtain
$$\langle U, M, S, \mathcal{T}^X\rangle_{+,\circ,\otimes,*}\subset C_{\Aut(X)}=\langle U, M, S, d_X\rangle_{+,\circ,\otimes,*}$$ using $d_X\in\TX$ we obtain  the inverse inclusion and the equality.

The second equality is the proposition \ref{TannaQ}.
\end{proof}

\begin{defin} A vertex-transitive graph $X$ is called \textbf{$\mathcal{B}$-clos} if 
$$C_{A(X)}(1,1)=\TX$$
which is equivalent to  $\TX\subset C_{A(X)}$.
\end{defin}

The quantum symmetries of a graph $\mathcal{B}$-clos are easier to study because he satisfies: 
$$C_{A(X)}=\langle U, M, \TX \rangle_{+,\circ,\otimes,*}$$

which mean $A(X)$ only depend on its automorphism group, $\Aut(X)$, in the sense of this following theorem.

\begin{theorem}\label{T2} Let $X$ and $Y$  be two graphs $\mathcal{B}$-clos with the same number $n$ of vertices. Then the following three propositions are equivalent:
\begin{enumerate}
\item $\mathcal{T}^X=\mathcal{T}^Y$
\item $\Aut(X)=\Aut(Y)$
\item $A(X)=A(Y)$
\end{enumerate}
\end{theorem}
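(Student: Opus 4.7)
The plan is to prove the three implications $(3) \Rightarrow (1) \Rightarrow (2)$ and $(1) \Rightarrow (3)$, together with the trivial $(2) \Rightarrow (1)$. The key ingredients are that $\mathcal{T}^X = C_{\Aut(X)}(1,1)$ by Lemma \ref{l2} and that $\mathcal{B}$-clos gives $\mathcal{T}^X = C_{A(X)}(1,1)$.

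The implication $(2) \Rightarrow (1)$ is immediate from the definition of $\mathcal{T}^X$, which only involves the orbits of $\Aut(X)$ on pairs. For $(1) \Rightarrow (2)$, I would use Lemma \ref{l2}: since $d_X \in \mathcal{T}^X$, a permutation $\sigma$ belongs to $\Aut(X)$ iff $P_\sigma$ commutes with $d_X$, which by Lemma \ref{l2} is equivalent to $P_\sigma$ commuting with every element of $\mathcal{T}^X$. Thus $\Aut(X) = \{\sigma \in \mathbb{S}_n \mid P_\sigma T = T P_\sigma \text{ for all } T \in \mathcal{T}^X\}$, and the same formula gives $\Aut(Y)$ from $\mathcal{T}^Y$, so $\mathcal{T}^X = \mathcal{T}^Y$ forces $\Aut(X) = \Aut(Y)$. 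Note that this part does not even require the $\mathcal{B}$-clos hypothesis.

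The $\mathcal{B}$-clos hypothesis is exactly what is needed to involve $A(X)$. For $(3) \Rightarrow (1)$: an equality $A(X) = A(Y)$ of quantum permutation algebras yields $C_{A(X)} = C_{A(Y)}$; taking the $(1,1)$ component and using that both $X$ and $Y$ are $\mathcal{B}$-clos gives $\mathcal{T}^X = C_{A(X)}(1,1) = C_{A(Y)}(1,1) = \mathcal{T}^Y$.

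The main step is $(1) \Rightarrow (3)$. Suppose $\mathcal{T}^X = \mathcal{T}^Y$. Since $d_Y \in \mathcal{T}^Y = \mathcal{T}^X = C_{A(X)}(1,1)$ (by the $\mathcal{B}$-clos hypothesis on $X$), the magic unitary $u$ of $A(X)$ satisfies $u d_Y = d_Y u$. By the universal property of $A(Y) = A_s(n) / \langle d_Y u = u d_Y\rangle$, the canonical surjection $A_s(n) \to A(X)$ factors through a surjective Woronowicz algebra morphism $\pi : A(Y) \to A(X)$ sending generators to generators. Exchanging the roles of $X$ and $Y$ produces $\pi' : A(X) \to A(Y)$. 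The composition $\pi \circ \pi'$ is a surjective morphism $A(X) \to A(X)$ fixing each $u_{ij}$, hence equals the identity, and similarly for $\pi' \circ \pi$; so $\pi$ is an isomorphism identifying $A(X)$ and $A(Y)$. The main obstacle is really just bookkeeping: checking that $d_X \in C_{A(Y)}(1,1)$ correctly means that the defining relation of $A(X)$ is satisfied in $A(Y)$, so that the universal property applies.
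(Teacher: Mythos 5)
Your proof is correct, but it takes a different route from the paper's. The paper proves the single cycle $(1)\Rightarrow(3)\Rightarrow(2)\Rightarrow(1)$: for $(1)\Rightarrow(3)$ it passes through $C_{A(X)}=\langle U,M,\mathcal{T}^X\rangle_{+,\circ,\otimes,*}$ (which is where the $\mathcal{B}$-clos hypothesis enters) and then invokes the Tannaka--Krein correspondence to conclude $C_{A(X)}=C_{A(Y)}\Leftrightarrow A(X)=A(Y)$; for $(3)\Rightarrow(2)$ it simply passes to the abelianisation $com(A(X))=C(\Aut(X))$; and $(2)\Rightarrow(1)$ is the canonicity of $\mathcal{T}^X$. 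You instead prove $(1)\Leftrightarrow(2)$ directly via Lemma \ref{l2} (correctly observing that neither direction needs the $\mathcal{B}$-clos hypothesis, since $\Aut(X)$ is recovered from $\mathcal{T}^X$ as the set of permutation matrices commuting with it, $d_X$ being one of its elements), and you replace the Tannaka--Krein step in $(1)\Rightarrow(3)$ by a concrete universal-property argument: $d_Y\in\mathcal{T}^Y=\mathcal{T}^X=C_{A(X)}(1,1)$ gives a surjection $A(Y)\to A(X)$ preserving generators, the symmetric argument gives the inverse, and the compositions fix the dense $*$-subalgebra generated by the $u_{ij}$, hence are the identity. Your route is more self-contained (it only uses the duality package for $(3)\Rightarrow(1)$, and even that could be replaced by abelianisation plus your $(2)\Rightarrow(1)$), at the cost of being longer; the paper's is shorter because the categorical machinery is already in place. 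Both arguments are sound.
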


\begin{proof}\indent

\underline{$(1)\Rightarrow (3)$}: 
\begin{eqnarray*} 
\mathcal{T}^X=\mathcal{T}^Y&\Rightarrow&\langle U, M, \mathcal{T}^X\rangle_{+,\circ,\otimes,*}=\langle U, M, \mathcal{T}^Y\rangle_{+,\circ,\otimes,*}  \\
&\Leftrightarrow& C_{A(X)}=C_{A(Y)} \\
&\Leftrightarrow& A(X)=A(Y) \\
\end{eqnarray*}

\underline{$(3)\Rightarrow (2)$}: By passing to the abelianised.

\underline{$(2)\Rightarrow (1)$}: Since $\mathcal{T}^X$ is canonicaly define by $\Aut(X)$.
\end{proof}

\medskip

\section{Applications to circulant $p$-graphs}

In this section we study quantum symmetries of some circulant graphs, define as follow.

\begin{defin}A graph with $n$ vertices is called circulant if its automorphism group contains a cycle of length $n$, and hence a copy of the cycle $\Z_n$. We also call $n$-circulant graph for a circulant graph with $n$ vertices.
\end{defin}

If we denoted by $(i_1,i_2,\dots,i_n)$ a cycle of length $n$ in a $n$-circulant graph then all pairs $(i_k,i_{k+1})$ for all $k\in[1,n]$ (with $i_{n+1}:=i_1$) have the same nature (connected or not). So, modulo complementation, we just need to study quantum symmetries where this above pairs are connected. 

As Banica and Bichon in \cite{BaBi11} we will concentrate our study on vertex-transitive graphs. First we remark that for any prime $p$  all vertex-transitive graph with $p$ vertices are circulant. Then vertex-transitive circulant graphs are exactly those describe by this following definition.

\begin{defin}\label{D} We denoted by $C_n(k_1,\dots,k_r)$, with $1<k_1<\dots<k_r\le\lfloor \frac{n}{2}\rfloor$, $k_i\in\N$, the graph obtained by drawing the $n$-cycle $C_n$, then connecting all pairs of vertices at distance $k_i$ on the circle, for all $i$.

 Which mean that for all $i,j\in[0,n-1]$:
$$i\sim j \Longleftrightarrow (i-j)\mod \ n\ \in\{\pm1,\pm k_1,\dots,\pm k_r\}$$
\end{defin}

\begin{example} The complete graph with $n$ vertices, name $K_n$ is circulant and here are two other examples of circulant graphs:
\begin{enumerate}[i.]
\item $X=C_6(2)=(3K_2)^c$:
$$\begin{tikzpicture}
  [scale=.8,auto=left,every node/.style={circle,fill=blue!20}]
  \node (n1) at (4,5.5) {0};
  \node (n2) at (6,5.5)  {1};
  \node (n3) at (7,4)  {2};  
  \node (n4) at (6,2.5)  {3};
  \node (n5) at (4,2.5)  {4};
  \node (n6) at (3,4) {5};

  \foreach \from/\to in {n1/n2,n2/n3,n3/n4,n4/n5,n5/n6,n6/n1,n1/n3,n2/n4,n3/n5,n4/n6,n6/n2,n1/n5}
    \draw (\from) -- (\to);
\end{tikzpicture}$$

\item $X=C_{13}(3,4):$
$$\begin{tikzpicture}
  [scale=.8,auto=left,every node/.style={circle,fill=blue!20}]
  \node (n1) at (0,10.4) {0};
  \node (n2) at (2.2,9.6)  {1};
  \node (n3) at (4.4,8)  {2};  
  \node (n4) at (5.2,5.6)  {3};
  \node (n5) at (4.8,3.2)  {4};
  \node (n6) at (3.6,1.2) {5};
  \node (n7) at (1.4,0)  {6};
  \node (n8) at (-1.4,0)  {7};
  \node (n9) at (-3.6,1.2) {8};
   \node (n10) at (-4.8,3.2) {9};
  \node (n11) at (-5.2,5.6)  {10};
  \node (n12) at (-4.4,8)  {11};
  \node (n13) at (-2.2,9.6) {12};
 
  \foreach \from/\to in {n1/n2,n2/n3,n3/n4,n4/n5,n5/n6,n6/n7,n7/n8,n8/n9,n9/n10,n10/n11,
n11/n12,n12/n13,n13/n1,
n1/n4,n1/n5,n2/n5,n2/n6,n3/n6,n3/n7,n4/n7,n4/n8,
n5/n8,n5/n9,n6/n9,n6/n10,n7/n10,n7/n11,n8/n11,n8/n12,
n9/n12,n9/n13,n10/n13,n10/n1,n11/n1,n11/n2,n12/n2,n12/n3,n13/n3,n13/n4}
    \draw (\from) -- (\to);
\end{tikzpicture}$$
\end{enumerate}
\end{example}

We also need more definition (from \cite{BaBi2}) to study those circulant graphs.

For a $n$-circulant graph we suppose that their vertices are elements of $\Z_n$. Then for every $k,i,j\in\Z_n$:$\ \ i\sim j\Rightarrow i+k\sim j+k$. We denote by $\Z_n^*$ the group of invertible elements of the ring $\Z_n$. The following definition comes from \cite{BaBi2}.

\begin{defin}Let $X$ be a circulant graph with $n$ vertices:
\begin{itemize}
\renewcommand{\labelitemi}{$\bullet$}
\item The set $S\subset\Z_n$ is given by $i\sim j\Longleftrightarrow j-i\in S$.
\item The group $E\subset\Z_n^*$ consists of elements $a\in\Z_n^*$ such that $aS=S$.
\item The order of $E$ is denoted $k$, and called type of $X$.
\end{itemize}
\end{defin}

We can check that the set $S$ is $\{\pm1,\pm k_1,\dots,\pm k_r\}$ from definition \ref{D}. In the general case the type of a circulant graph is always even because $-1$ of order $2$ is in $E$.

\begin{remark}\label{Lienpk}By definition $E$ is a subgroup of $\Z_n^*$ so when $n=p$ is prime $p-1$ is divided by the order of $E$ so there exists $\lambda$ in $\N^*$ such that:
$$p=\lambda k+1$$
\end{remark}

The study in \cite{BaBi2} use the the notion of $2$-maximal as follows.

\begin{defin}{[\cite{BaBi2}, Definition 4.2]} Let $(R,+,.)$ a ring such that  $2\in R^*$ where $R^*$ is the group of invertible element of $R$. An even subgroup $G\subset R^*$  is called $2$-maximal if:
$$a-b=2(c-d)$$
with $a,b,c,d\in G$ implies $a=\pm b$.
\end{defin}

\begin{theorem}\label{SQ}{[\cite{BaBi2}, Theorem 4.4]} If $E\subset\Z_p$ is $2$-maximal (with $p\ge 5$) then $X$ has no quantum symmetry.
\end{theorem}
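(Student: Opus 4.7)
Since this is recalled from \cite[Theorem 4.4]{BaBi2}, the plan is to rephrase their argument using the intertwinner formalism developed above. By Corollary~\ref{CorS}, the task reduces to proving $S \in C_{A(X)}$, or equivalently that $A(X)$ is commutative.

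First I would diagonalize $d_X$ in the character basis of $\Z_p$: setting $\zeta = e^{2 i \pi / p}$ and $\chi_m = (\zeta^{jm})_{0 \le j < p}$, one has $d_X \chi_m = \lambda_m \chi_m$ with $\lambda_m = \sum_{s \in S} \zeta^{ms}$, and the stability $aS = S$ for $a \in E$ groups the characters into $E$-orbits sharing an eigenvalue, so the eigenspaces of $d_X$ are indexed by the $E$-orbits on $\Z_p$. Since $d_X \in C_{A(X)}(1,1)$, the Hadamard powers $d_X{}_{\times} d_X {}_{\times} \dots {}_{\times} d_X$ all remain in $C_{A(X)}(1,1)$ by Proposition~\ref{PropProduit}; a Vandermonde argument on these powers delivers every spectral projection of $d_X$ inside $C_{A(X)}(1,1)$, as soon as the eigenvalues across different $E$-orbits are pairwise distinct. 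Combined with Lemma~\ref{TG112} one obtains $C_{A(X)}(1,1) = \TX$, so that $X$ is $\mathcal{B}$-clos.

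The decisive step is to upgrade this to $S \in C_{A(X)}$, and this is where the $2$-maximality of $E$ is consumed. Inspecting the relation $u d_X = d_X u$ in each character isotypic component, one extracts commutation relations between entries of $u$ indexed by different $E$-orbits; the arithmetic condition $a - b = 2(c - d) \Rightarrow a = \pm b$ is precisely what excludes the non-classical coincidences between quadratic combinations of eigenvalues of $d_X$ that would otherwise allow non-commuting intertwiners at the $(2,2)$-level. The hypothesis $p \ge 5$ enters as a safeguard so that the cyclotomic Vandermonde step has enough independent character powers to be invertible.

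The main obstacle is the $(2,2)$-level count: being $\mathcal{B}$-clos controls only the $(1,1)$-intertwiners, and extending this control to higher tensor levels requires an exact match between the pair-orbits of the affine group $E \ltimes \Z_p$ on $\Z_p \times \Z_p$ and tensor products of spectral projections of $d_X$. Carrying out this match and showing that no extra ``quantum'' intertwiners appear is the combinatorial heart of the argument, and it is precisely $2$-maximality that makes the match exact; without it, one obtains only $\mathcal{B}$-closedness, which is strictly weaker than the absence of quantum symmetry.
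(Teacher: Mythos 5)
The paper does not actually prove this statement: it is recalled verbatim from \cite{BaBi2} (Theorem 4.4 there), so the only ``proof'' in the paper is the citation. Your proposal is therefore being measured against the argument of Banica--Bichon--Chenevier, and as it stands it has two genuine problems. First, a concrete technical error: the Hadamard powers $d_X{}_{\times}d_X{}_{\times}\cdots{}_{\times}d_X$ of a $0$--$1$ matrix are all equal to $d_X$ itself (Proposition \ref{PropProduit}(2) squares each entry), so they cannot feed a Vandermonde argument. What produces the spectral projections inside $C_{A(X)}(1,1)$ is the ordinary polynomial algebra $\C[d_X]$, i.e.\ the usual matrix powers $d_X^j$, which lie in $C_{A(X)}(1,1)$ because that space is closed under composition. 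The Hadamard product plays a different role (it is what shows $\TX$ is closed under ${}_{\times}$ and underlies the dimension count of Proposition \ref{pcoh}); conflating the two products breaks the step where you claim to recover all spectral projections.

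Second, and more seriously, the step that actually uses $2$-maximality is not carried out: you assert that ``inspecting $ud_X=d_Xu$ in each character isotypic component'' extracts commutation relations and that the condition $a-b=2(c-d)\Rightarrow a=\pm b$ ``excludes the non-classical coincidences,'' but no relation is derived, and your closing paragraph concedes that the combinatorial heart of the argument is missing. Being $\mathcal{B}$-clos (equivalently, controlling $C_{A(X)}(1,1)$) is indeed strictly weaker than having no quantum symmetry --- the graphs $C_{13}(3,4)$ and $C_{17}(2,4,8)$ treated in Sections 7 and 8 of this paper are $\mathcal{B}$-clos yet require substantial extra work at the $(2,2)$-level --- so the gap is exactly where the theorem's content lies. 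The argument of \cite{BaBi2} does not proceed by matching pair-orbits with tensor products of spectral projections; it works on the group algebra side, writing the coaction on a generating character $\chi$ of $\Z_p$ as $\alpha(\chi)=\sum_i\chi^i\otimes r_i$, using the eigenvector equations and the multiplicativity of $\alpha$ on products $\chi^a\chi^b$ with $a,b\in E$ to derive relations among the $r_i$, and invoking $2$-maximality precisely to show that the surviving coefficients commute, whence $A(X)$ is commutative and Corollary \ref{CorS} applies. Your outline would need that computation (or an honest intertwinner-space substitute for it) to become a proof.
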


\begin{theorem}\label{Borne0}{[\cite{BaBi2}, Theorem 5.3]} A type $k$ circulant $p$-graphe with $p> 6^{\varphi(k)}$ has no quantum symmetry. Where $\varphi$ is the Euler function. 
\end{theorem}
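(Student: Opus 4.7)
The plan is to invoke Theorem \ref{SQ}: it suffices to prove that $E \subset \Z_p^*$ is $2$-maximal whenever $p > 6^{\varphi(k)}$ (the hypothesis $p \geq 5$ of that theorem being automatic, since $6^{\varphi(k)} \geq 6$). Assume for contradiction that there exist $a, b, c, d \in E$ with $a - b \equiv 2(c - d) \pmod p$ and $a \not\equiv \pm b \pmod p$. Since $E$ is the unique cyclic subgroup of order $k$ in $\Z_p^*$, its elements are exactly the $k$-th roots of unity modulo $p$, so in particular $k$ divides $p - 1$.

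The key idea is to lift the problem to the ring of cyclotomic integers $\Z[\zeta_k]$, where $\zeta_k \in \C$ is a primitive $k$-th root of unity. Since $k \mid p - 1$, the prime $p$ splits completely in $\Z[\zeta_k]$; pick a prime ideal $\mathfrak{p}$ above $p$. The reduction map $\Z[\zeta_k] \to \Z[\zeta_k]/\mathfrak{p} \cong \Fp$ restricts to a group isomorphism from the $k$-th roots of unity in $\Z[\zeta_k]$ onto $E \subset \Fp^*$, so we may lift $a, b, c, d$ to roots of unity $\tilde a, \tilde b, \tilde c, \tilde d \in \Z[\zeta_k] \subset \C$ reducing respectively to $a, b, c, d$ mod $\mathfrak{p}$.

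Next I bound the norm of the cyclotomic integer $\alpha := \tilde a - \tilde b - 2(\tilde c - \tilde d)$. The hypothesis $a - b \equiv 2(c - d) \pmod p$ translates to $\alpha \in \mathfrak{p}$, so $p$ divides $N_{\Q(\zeta_k)/\Q}(\alpha) \in \Z$. On the other hand, each of the $\varphi(k)$ Galois conjugates of $\alpha$ is again an integer combination of four roots of unity with coefficients $1, -1, -2, 2$, hence has complex absolute value at most $1 + 1 + 2 + 2 = 6$. Thus $|N(\alpha)| \leq 6^{\varphi(k)} < p$, and since $p$ divides this integer we must have $N(\alpha) = 0$; as $\Z[\zeta_k]$ is an integral domain, this forces $\alpha = 0$, i.e.\ $\tilde a - \tilde b = 2(\tilde c - \tilde d)$ as an identity of complex numbers.

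It remains to deduce $\tilde a = \pm \tilde b$, for then reducing mod $\mathfrak{p}$ gives $a \equiv \pm b \pmod p$, contradicting the standing hypothesis. Dividing through by $\tilde b$, the equation reads $w - 1 = 2(x - y)$ with $w, x, y$ complex roots of unity. Taking squared moduli and writing $w = e^{i\theta}$, $xy^{-1} = e^{im}$, one obtains $\sin^2(\theta/2) = 4\sin^2(m/2)$. The main obstacle will be to check carefully that the only pairs $(\theta, m)$ of rational multiples of $\pi$ solving this identity, and for which the full complex equation $w - 1 = 2(x - y)$ (not merely the modulus) is realized, are the degenerate case $\theta = m = 0$ (giving $w = 1$, hence $\tilde a = \tilde b$) and the case $\theta = \pi$ together with $m \equiv \pm \pi/3 \pmod{2\pi}$ (giving $w = -1$, hence $\tilde a = -\tilde b$, a configuration which moreover forces $6 \mid k$). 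Both outcomes yield $\tilde a = \pm \tilde b$, closing the argument.
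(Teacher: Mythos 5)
The paper itself offers no proof of this statement: it is quoted verbatim from \cite{BaBi2} (Theorem 5.3), so the only meaningful comparison is with the argument of that reference. Your overall route is exactly theirs: reduce to the $2$-maximality of $E$ so as to invoke Theorem \ref{SQ}, lift $E$ to the group of $k$-th roots of unity in $\Z[\zeta_k]$ through a prime $\mathfrak{p}$ above the totally split prime $p$, and use the bound $\left|N_{\Q(\zeta_k)/\Q}\bigl(\tilde a-\tilde b-2(\tilde c-\tilde d)\bigr)\right|\le 6^{\varphi(k)}<p$ together with $p\mid N(\alpha)$ to force $\alpha=0$. Everything up to and including the conclusion $\tilde a-\tilde b=2(\tilde c-\tilde d)$ in $\C$ is correct.

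The gap is in the final step, which you yourself flag as ``the main obstacle\dots to check carefully'' and then do not carry out: the assertion that a relation $w-1=2(x-y)$ among complex roots of unity forces $w=\pm1$, i.e.\ that the group of $k$-th roots of unity is $2$-maximal in $\C$. This is precisely the nontrivial content of the corresponding lemma in Section 5 of \cite{BaBi2}, and it does not follow from the modulus identity $\sin^2(\theta/2)=4\sin^2(m/2)$: that scalar equation discards all phase information, and even classifying its solutions over rational multiples of $\pi$ requires a Conway--Jones type analysis of trigonometric diophantine equations, none of which is supplied. To close the argument you must either reproduce the elementary computation of \cite{BaBi2} (for instance, applying complex conjugation to get $\tilde a^{-1}-\tilde b^{-1}=2(\tilde c^{-1}-\tilde d^{-1})$, hence $\tilde a\tilde b=\tilde c\tilde d$ when $\tilde a\ne\tilde b$, and then showing that the resulting quadratic $2s^2-s(t-1)-2t=0$ has a root of unity solution $s$ only for $t=\pm1$), or analyse the vanishing sum of six roots of unity $w+y+y-x-x-1=0$ via the Mann/Conway--Jones classification (the repeated terms $y,y$ and $-x,-x$ exclude the exceptional minimal relation of length $6$, and the $2+2+2$ and $3+3$ decompositions all yield $w=\pm1$). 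As written, the proof is incomplete at its only delicate point; the conclusion is nonetheless true, so the gap is fillable.
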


It means that we just need to study circulant $p$-graph which satisfy $p\leqslant  6^{\varphi(k)}$. 

For $k\leqslant 12$ we have:
\begin{center}
\begin{tabular}{ | c | c  | c | c | c | c | }
 \hline                 
   $k$ & $4$ & $6$ & $8$ & $10$ & $12$ \tabularnewline \hline
$6^{\varphi(k)}$ & $36$ & $36$ & $1296$ & $1296$ & $1296$ \\ \hline 
 \end{tabular}
\end{center}

To study all this circulant graphs with $p\leqslant  6^{\varphi(k)}$ we  look at the space $\TX$ for this graph. For the orbitals of this $p$-graphs we have those following results from \cite{JoyMorris}.

\begin{prop}\label{prepa1} If $X$ is a non trivial circulant $p$-graph then for all $i\in[0,p-1]$, we have the following isomorphism
\begin{eqnarray*}
\Phi_i: E&\to &\Aut_i(X)\\
e&\mapsto & (\sigma_e: i+k\mapsto i+ke) \\
\end{eqnarray*}
\end{prop}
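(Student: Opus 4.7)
The plan is to verify directly that the formula $\sigma_e(i+k) = i+ke$ defines a map with the required properties (well-defined, vertex-fixing, automorphism of $X$), check that $\Phi_i$ is a homomorphism, establish injectivity, and then invoke the structure theorem for automorphism groups of circulant graphs of prime order to obtain surjectivity.

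First I would rewrite $\sigma_e$ in the intrinsic form $\sigma_e(j)=i+e(j-i)$ for $j\in\Z_p$. Then $\sigma_e(i)=i$, so if $\sigma_e$ is a graph automorphism it lies in $\Aut_i(X)$. To see the adjacency is preserved, I use the characterisation $j\sim j'\Leftrightarrow j'-j\in S$ and the fact that by definition of $E$ we have $eS=S$, which yields
\[
\sigma_e(j')-\sigma_e(j)=e(j'-j)\in S\ \Longleftrightarrow\ j'-j\in S.
\]
So $\sigma_e\in\Aut_i(X)$, and $\Phi_i$ is well defined. For the homomorphism property, a short direct computation gives $\sigma_{e_1}\circ\sigma_{e_2}(j)=i+e_1e_2(j-i)=\sigma_{e_1e_2}(j)$, using that $\Z_p^*$ is abelian. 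Injectivity is immediate: if $\sigma_e=\id$, then taking $j=i+1$ yields $e=1$.

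The main obstacle is surjectivity: every element of $\Aut_i(X)$ must be of the form $\sigma_e$ for some $e\in E$. This is not elementary and relies on the classical structure theorem for circulant graphs of prime order (Turner's theorem, as recorded in \cite{JoyMorris}), which states that for a non-trivial circulant $p$-graph with $p$ prime,
\[
\Aut(X)=\{\, x\mapsto ax+b \;\mid\; a\in E,\ b\in\Z_p\,\}.
\]
Given this, an element of $\Aut_i(X)$ is a map $x\mapsto ax+b$ fixing $i$, so $b=i(1-a)$, and the map rewrites as $x\mapsto i+a(x-i)=\sigma_a(x)$ with $a\in E$. Hence $\Phi_i$ is surjective.

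Putting the pieces together, $\Phi_i$ is a group isomorphism. The only nontrivial input is Turner's theorem, which I would quote from \cite{JoyMorris} rather than reprove; all other steps are a direct verification using the definitions of $S$ and $E$.
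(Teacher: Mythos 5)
Your proof is correct. The paper does not actually prove Proposition \ref{prepa1}; it simply quotes it as a known result from \cite{JoyMorris} (see also \cite{Al}), so your write-up supplies exactly the missing details: the elementary verifications (that $\sigma_e$ is an automorphism fixing $i$, that $\Phi_i$ is an injective homomorphism) are routine, and you correctly identify that the only substantive input is the Alspach/Turner structure theorem $\Aut(X)=\{x\mapsto ax+b \mid a\in E,\ b\in\Z_p\}$ for non-trivial circulant graphs of prime order, from which surjectivity follows by solving $ai+b=i$. This is the intended route, and the ``non trivial'' hypothesis is precisely what licenses the appeal to that theorem.
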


It means that for all $s\in[1,r]$, the orbitals $\mathcal{O}_0^s$ are of the form:
$$\mathcal{O}_0^s=y_sE$$
with a choice of $y_s$ such that:
$$1=y_1<y_2<\dots <y_r$$
Then we also obtain:
$$\mathcal{O}_i^s=i+y_sE$$

\begin{prop}\label{pcoh}Every circulant $p$-graphs satisfy
$$\C[d_X]=\TX$$
so every circulant $p$-graph is $\mathcal{B}$-clos.
\end{prop}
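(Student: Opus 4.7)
The plan is to first reduce the $\mathcal{B}$-clos property to the equality $\C[d_X]=\TX$, and then prove this equality. For the reduction: $d_X$ lies in the intertwiner space $C_{A(X)}(1,1)$ by the very definition of $A(X)$, and $C_{A(X)}(1,1)$ is closed under composition (being the $(1,1)$-component of a tensor category with duals, see Proposition \ref{p1}). One therefore obtains the chain of inclusions
$$\C[d_X]\subset C_{A(X)}(1,1)\subset \TX,$$
where the second inclusion is Lemma \ref{TG112}. Hence $\C[d_X]=\TX$ immediately forces $C_{A(X)}(1,1)=\TX$, which is exactly the $\mathcal{B}$-clos condition.

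It then remains to prove $\C[d_X]=\TX$. The inclusion $\C[d_X]\subset\TX$ is immediate, since $d_X\in\TX$ by Lemma \ref{l2} and $\TX$ is closed under composition. For the reverse inclusion I would compare dimensions: by Proposition \ref{prepa1} one has $\dim\TX=1+(p-1)/|E|$ (the number of $E$-orbits on $\Z_p$), while $\dim\C[d_X]$ equals the number of distinct eigenvalues of $d_X$.

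Since $X$ is circulant with connection set $S\subset\Z_p^*$, the matrix $d_X$ commutes with the cyclic shift on $\C^p$ and is therefore simultaneously diagonalised with it by the discrete Fourier transform, with eigenvalues
$$\lambda(k)=\sum_{s\in S}\omega^{sk},\qquad k\in\Z_p,\ \omega=e^{2\pi i/p}.$$
Since $aS=S$ for every $a\in E$, the function $\lambda$ is $E$-invariant and takes at most $1+(p-1)/|E|$ distinct values; equality would then be established by two checks. First, $\lambda(0)=|S|$ must differ from $\lambda(k)$ for $k\ne 0$: the triangle inequality gives $|\lambda(k)|\le|S|$ with equality only if all $\omega^{sk}$ for $s\in S$ coincide, impossible since $p$ is prime and $|S|\ge 2$ (as $S=-S$, $0\notin S$). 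Second, for $k,k'\in\Z_p^*$ the equality $\lambda(k)=\lambda(k')$ forces $k,k'$ to lie in the same $E$-orbit.

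This second step is the main obstacle. Setting $a=k(k')^{-1}\in\Z_p^*$, one has $\lambda(k)=\lambda(k')$ if and only if $\sum_{s\in aS}\omega^s=\sum_{s\in S}\omega^s$, i.e.\ $\mathbf{1}_{aS}-\mathbf{1}_S$ lies in the kernel of the natural map $\Z[\Z_p]\to\Q(\omega)$; since $p$ is prime this kernel is generated by $\sum_i[i]$, so $\mathbf{1}_{aS}-\mathbf{1}_S=c\cdot\mathbf{1}_{\Z_p}$ for some integer $c$. The coefficients of the left-hand side lie in $\{-1,0,1\}$ and vanish at index $0$ (since $0\notin S\cup aS$, as $S\subset\Z_p^*$), so $c=0$ and $aS=S$, giving $a\in E$. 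Combined with the first check, this produces exactly $1+(p-1)/|E|$ distinct values of $\lambda$, so $\dim\C[d_X]=\dim\TX$ and the proof is complete. Primality of $p$ is essential here: it is precisely what makes the kernel of $\Z[\Z_p]\to\Q(\omega)$ cyclic, and without it the constant $c$ could no longer be forced to vanish.
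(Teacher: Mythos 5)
Your proof is correct and follows the same overall strategy as the paper: both reduce the statement to the dimension count $\dim\C[d_X]=\dim\TX=1+\frac{p-1}{|E|}$, with the left-hand side being the number of distinct eigenvalues of $d_X$ and the right-hand side the number of $E$-orbits on $\Z_p$. The one genuine difference is that where the paper simply cites \cite{BaBi2} for the fact that all non-trivial eigenspaces of $d_X$ have dimension $k=|E|$, you prove the equivalent statement from scratch: diagonalising $d_X$ by the discrete Fourier transform and showing, via the irreducibility of the $p$-th cyclotomic polynomial (so that the kernel of $\Z[\Z_p]\to\Q(\omega)$ is generated by $\sum_i[i]$), that $\lambda(k)=\lambda(k')$ for $k,k'\in\Z_p^*$ forces $kS=k'S$, i.e.\ $k(k')^{-1}\in E$. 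This makes the argument self-contained and isolates exactly where primality of $p$ enters; the only caveats are minor: your intermediate reformulation should read $\mathbf{1}_{kS}-\mathbf{1}_{k'S}$ rather than $\mathbf{1}_{aS}-\mathbf{1}_{S}$ (the two are interchangeable via the Galois action $\omega\mapsto\omega^{k'}$, and the kernel argument goes through verbatim either way), and both your proof and the paper's implicitly exclude the trivial graph $S=\emptyset$, consistent with the non-triviality hypothesis in Proposition \ref{prepa1}. Your explicit reduction of the $\mathcal{B}$-clos property to $\C[d_X]=\TX$ via $\C[d_X]\subset C_{A(X)}(1,1)\subset\TX$ is also a welcome clarification of a step the paper leaves implicit.
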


\begin{proof}From proposition \ref{TannaClassique} we already know that $\C[d_X]\subset \TX$. We prove the other inclusion by checking the dimensions of this two spaces. First, the dimension of $\TX$ is the number "$r+1$" of $\mathrm{Aut}_0(X)$-orbit and we know from \ref{prepa1} that this orbital are $\{0\}$ and the $y_sE$ with $s\in[1,r]$. So, except the trivial one, this orbital are all of length $k=|E|$ so we have: $1+r\times k=p$, which gives us: $$\dim\left(\TX\right)=r+1=\dfrac{p-1}{k}+1$$
Moreover, the dimension of $\C[d_X]$ is the number of eigen values of $d_X$ but, from \cite{BaBi2}, all the non trivial eigenspaces of $d_X$ are $k$-dimensionnal so we have: 
$$\dim\left(\C[d_X]\right)=\dfrac{p-1}{k}+1=\dim\left(\TX\right)$$
which ends the proof.
\end{proof}

This result with theorem \ref{T2} gives us this following proposition.

\begin{prop} Let $X_1$ and $X_2$ be two circulant $p$-graphs, then
$$E_1=E_2\Longrightarrow A(X_1)=A(X_2)$$
\end{prop}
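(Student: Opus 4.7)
The plan is to combine Theorem \ref{T2} with Propositions \ref{pcoh} and \ref{prepa1}. Since Theorem \ref{T2} requires $\mathcal{B}$-clos graphs, the first thing to note is that Proposition \ref{pcoh} precisely guarantees this hypothesis: every circulant $p$-graph is $\mathcal{B}$-clos. So it is enough to prove $\mathcal{T}^{X_1}=\mathcal{T}^{X_2}$, and then Theorem \ref{T2} yields $A(X_1)=A(X_2)$.

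To get $\mathcal{T}^{X_1}=\mathcal{T}^{X_2}$, I would argue that for a circulant $p$-graph $X$, the space $\mathcal{T}^X$ is determined purely by the group $E$ associated to $X$. Indeed, $\mathcal{T}^X$ is by definition spanned by the operators $T_s$ associated to the orbits of $\mathrm{Aut}_0(X)$ on $[0,p-1]$, and Proposition \ref{prepa1} identifies $\mathrm{Aut}_0(X)$ with $E$ via $e\mapsto(k\mapsto ke)$. Thus the $\mathrm{Aut}_0(X)$-orbits on $[0,p-1]$ are exactly $\{0\}$ together with the cosets of $E$ in $\mathbb{Z}_p^*$ under multiplication, depending only on $E$ and not on the particular graph $X$.

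Consequently, if $E_1=E_2$, then the orbit partitions of $[0,p-1]$ under $\mathrm{Aut}_0(X_1)$ and $\mathrm{Aut}_0(X_2)$ coincide as set-partitions, so the spanning families $(T_s)_{0\le s\le r}$ for $\mathcal{T}^{X_1}$ and $\mathcal{T}^{X_2}$ are identical (up to reordering of indices). Hence $\mathcal{T}^{X_1}=\mathcal{T}^{X_2}$, and by the $\mathcal{B}$-clos property together with Theorem \ref{T2}, this gives $A(X_1)=A(X_2)$.

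I do not expect any real obstacle: the proof is essentially a bookkeeping exercise assembling three earlier results. The only subtle point is verifying that the orbits of $\mathrm{Aut}_0(X)$ on $[0,p-1]$ coincide with the cosets of $E$ in $\mathbb{Z}_p^*$ (plus $\{0\}$), which follows transparently from the explicit description of $\Phi_i$ in Proposition \ref{prepa1}.
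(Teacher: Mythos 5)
Your proof is correct, and it follows the same overall skeleton as the paper's (Proposition \ref{pcoh} to get the $\mathcal{B}$-clos hypothesis, then Theorem \ref{T2}), but it enters Theorem \ref{T2} through a different door. The paper establishes condition $(2)$ of that theorem, $\Aut(X_1)=\Aut(X_2)$, by citing the classical results of Alspach and Morris (\cite{Al}, \cite{JoyMorris}) that $E$ characterizes the full automorphism group of a circulant $p$-graph, and then uses the implication $(2)\Rightarrow(3)$. You instead establish condition $(1)$, $\mathcal{T}^{X_1}=\mathcal{T}^{X_2}$, directly: Proposition \ref{prepa1} identifies $\Aut_0(X)$ with $E$ acting by multiplication, so the stabilizer orbits are $\{0\}$ and the cosets $y_sE$, and hence (via $\mathcal{O}_i^s=i+y_sE$, stated in the paper right after Proposition \ref{prepa1}) the spanning matrices $T_s$ depend only on $E$. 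This buys you something: you only need the description of the \emph{point stabilizer} of a circulant $p$-graph, which is a weaker input than the full statement that $E$ determines $\Aut(X)$, so your argument is more self-contained relative to the machinery already developed in the paper. One small point worth making explicit in your write-up: $\mathcal{T}^X$ is spanned by the $T_s$, whose entries are indicators of the orbitals $\mathcal{O}^s\subset[0,p-1]^2$, not merely of the $\Aut_0$-orbits; the passage from "same $\Aut_0$-orbit partition" to "same $T_s$" uses the translation formula $\mathcal{O}_i^s=i+y_sE$, which you should cite rather than leave implicit.
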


\begin{proof}From \cite{Al} or \cite{JoyMorris} we know that $E$ characterizes $\mathrm{Aut}(X)$ for $p$-graphs. So with theorem \ref{T2} we have:
$$E_1=E_2\Longrightarrow \mathcal{T}^{X_1}=\mathcal{T}^{X_1}\Longrightarrow A(X_1)=A(X_2)$$
\end{proof}

This proposition is usefull because it says that the study of quantum symmetries for $p$-graphs depend only on $E$. Then we can reduce our study to the case $S=E$ since circulant-graph satisfy $E\subset S$. For $p$ and $k$ fixed there exists at most one graph to study since $E$ is the unique subgroup of order $k$ in $(\Z_p^*,\times)$.

\begin{example} There exist $15$ graphs of type $6$ for $p=31$ which are all of the form $C_{31}(5,6,U)$, where $U$ is any union of $(2,10,12)$, $(4,7,11)$, $(3,13,15)$ and $(8,9,14)$ (except union of the fourth). In all the cases, $E=\{\pm1,\pm5,\pm6\}$ so we just need to study $C_{31}(5,6)$.
\end{example}

We now give a new way to find if $X$ has quantum symmetries, using the duality of the corollary \ref{CorS}. We recall that the number of non trivials orbits of  $[0,p-1]$ under the action of $\Aut_0(X)$ is $r$ with
$$p=1+r k$$
and $\mathcal{O}_i^s=i+y_sE$ (with $y_1=1$). We can see that $r=\lambda$ from remark \ref{Lienpk}. 

\begin{prop}\label{NoSym} Let $X$ be a circulant $p$-graph such that
$$\text{for every}\  s \ \text{in}\ [1,r],\ \ \text{there exist }\  t_s^1,t_s^2\\ \text{in}\ [1,r] \text{ such that}\ |\mathcal{O}_0^{t_s^1}\cap\mathcal{O}_{y_s}^{t_s^2}|=1$$
Then $X$ has no quantum symmetry.
\end{prop}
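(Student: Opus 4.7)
The plan is to show $S \in C_{A(X)}$; by Corollary~\ref{CorS} this is equivalent to $X$ having no quantum symmetry. Because $X$ is a circulant $p$-graph, Proposition~\ref{pcoh} tells us that $X$ is $\mathcal{B}$-clos, hence $T_s \in C_{A(X)}(1,1)$ for every orbital index $s$. Each intertwining relation $T_s u = u T_s$ can be read entrywise in $A(X)$ as
$$\sum_{k \in \mathcal{O}_i^s} u_{k,j} \ = \ \sum_{k \in \mathcal{O}_j^s} u_{i,k}, \quad i, j \in [0, p-1].$$

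For a fixed $s \in [1, r]$, I first write this identity at $(s, j) = (t_s^1, 0)$ and at $(s, j) = (t_s^2, y_s)$, and multiply the two resulting identities in $A(X)$. Thanks to the magic unitary orthogonality in row $i$, namely $u_{i,k} u_{i,l} = \delta_{k,l} u_{i,k}$, the product of the right-hand sides collapses into a single sum over $\mathcal{O}_0^{t_s^1} \cap \mathcal{O}_{y_s}^{t_s^2}$, which by hypothesis is a singleton $\{z_s\}$. Hence this product equals $u_{i, z_s}$. Performing the same multiplication in the opposite order yields the same right-hand side, producing the aggregated commutation identity
$$\Bigl(\sum_{k \in \mathcal{O}_i^{t_s^1}} u_{k, 0}\Bigr)\Bigl(\sum_{l \in \mathcal{O}_i^{t_s^2}} u_{l, y_s}\Bigr) \ = \ \Bigl(\sum_{l \in \mathcal{O}_i^{t_s^2}} u_{l, y_s}\Bigr)\Bigl(\sum_{k \in \mathcal{O}_i^{t_s^1}} u_{k, 0}\Bigr) \ = \ u_{i, z_s}.$$

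From here the plan is to upgrade this aggregated commutation of sums to the individual commutations $u_{k, 0} u_{l, y_s} = u_{l, y_s} u_{k, 0}$, and then to extend them to arbitrary pairs $u_{a, b}$ and $u_{c, d}$ by varying $s$ and exploiting the $\Z_p$-translation invariance coming from the circulant structure of $X$. Once all entries of $u$ commute, $A(X)$ is commutative, so $S \in C_{A(X)}$ and $X$ has no quantum symmetry. The main obstacle I expect is precisely this last extraction step: the sums $P_i := \sum_{k \in \mathcal{O}_i^{t_s^1}} u_{k, 0}$ and $Q_i := \sum_{l \in \mathcal{O}_i^{t_s^2}} u_{l, y_s}$ are merely sums of mutually orthogonal column-projections rather than the spectral projections of a distinguished element, so extracting the commutation of individual summands from $P_i Q_i = Q_i P_i$ will require multiplying both sides by appropriate entries of $u$ on the left and right and iterating the argument over enough choices of $i$ to pin down every desired pair of indices.
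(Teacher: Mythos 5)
Your setup is correct as far as it goes: by Proposition~\ref{pcoh} every $T_t$ lies in $C_{A(X)}(1,1)$, the entrywise form of $T_t u = uT_t$ is as you state, and multiplying the two relations at $(t_s^1,0)$ and $(t_s^2,y_s)$ does collapse, via row orthogonality of the magic unitary, to $P_iQ_i=Q_iP_i=u_{i,z_s}$ with $\{z_s\}=\mathcal{O}_0^{t_s^1}\cap\mathcal{O}_{y_s}^{t_s^2}$. But the step you defer --- extracting the commutation of the individual entries $u_{k,0}$ and $u_{l,y_s}$ from the commutation of the aggregated projections $P_i$ and $Q_i$ --- is precisely the entire difficulty of the proposition, and you give no argument for it. Two sums of mutually orthogonal projections can commute without any individual pair of summands commuting, and multiplying $P_iQ_i=u_{i,z_s}$ by single entries only yields further aggregated identities such as $u_{k,0}Q_i=u_{k,0}u_{i,z_s}$; nothing in the singleton hypothesis obviously lets you split these. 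This is a genuine gap, not a routine verification, so the proposal does not constitute a proof.

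It is also worth noting that the paper's own proof deliberately avoids this extraction problem. Rather than trying to prove commutativity of the generators directly, it builds the flip $S$ as an explicit element of $C_{A(X)}(2,2)$: it sets $g_s:=(\id\otimes M)\circ(\id\otimes T_s\otimes\id)\circ(M^*\otimes\id)$, which projects onto pairs $(e_i,e_j)$ with $j\in\mathcal{O}_i^s$, and $f_s$, a composition of $M$, $M^*$ and the $T_t$'s, which acts as $e_i\otimes e_j\mapsto e_j\otimes e_i$ on exactly those pairs; then $F=\sum_s f_s\circ g_s$ equals $S$, and Corollary~\ref{CorS} concludes. The combinatorial input making $f_s$ work is Lemma~\ref{prepa2}, which upgrades the single hypothesis $|\mathcal{O}_0^{t_s^1}\cap\mathcal{O}_{y_s}^{t_s^2}|=1$ to the companion singletons $\mathcal{O}_i^{s}\cap\mathcal{O}_{k}^{t_s^2}=\{j\}$ and $\mathcal{O}_{k}^{t_s^1}\cap\mathcal{O}_j^{s}=\{i\}$ using the group $E$; this lemma, or something playing its role, is absent from your proposal. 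If you want to salvage your algebraic route, you would need to supply an actual mechanism (in the spirit of the $2$-maximality arguments of Banica--Bichon--Chenevier) for passing from aggregated to individual commutation; as written, the conclusion does not follow.
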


To prove it we need the following lemma, characteristic of circulant $p$-graphs.

\begin{lem}\label{prepa2} Let $i,j,k\in[0,p-1]$ and $s,s_i, s_j\in[1,r]$ such that
$$j\in\mathcal{O}_i^s \ \text{ and }\ \mathcal{O}_i^{s_i}\cap\mathcal{O}_j^{s_j}=\{k\},$$
then
$$\mathcal{O}_i^{s}\cap\mathcal{O}_k^{s_j}=\{j\}\ \ \ \ \text{ and }\ \ \ \ \mathcal{O}_j^{s}\cap\mathcal{O}_k^{s_i}=\{i\}$$
\end{lem}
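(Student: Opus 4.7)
The plan is to use the concrete description $\mathcal{O}_i^s = i + y_s E$ from Proposition \ref{prepa1} to translate each intersection into a counting problem in $\Z_p$, and then to exploit the fact that $-1 \in E$ (since $X$ is an undirected circulant graph) to upgrade the natural two-variable count to a manifestly $S_3$-symmetric three-variable count of zero-sum triples.

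Concretely, I will introduce, for $a, b, c \in [0, r]$, the quantity
$$N(a, b, c) = \#\bigl\{(u, v) \in y_a E \times y_b E : u + v = y_c\bigr\}.$$
Writing any $k' \in \mathcal{O}_i^{s_i} \cap \mathcal{O}_j^{s_j}$ as $k' - i = u \in y_{s_i} E$ and $j - k' = v \in y_{s_j} E$ (using $-E = E$), the constraint becomes $u + v = j - i \in y_s E$. A scaling by an element of $E$ reduces the count to $N(s_i, s_j, s)$, so the hypothesis reads $N(s_i, s_j, s) = 1$. The same translation, now using $k - i \in y_{s_i} E$, respectively $k - j \in y_{s_j} E$, as the ``base'' difference, gives
$$|\mathcal{O}_i^s \cap \mathcal{O}_k^{s_j}| = N(s, s_j, s_i), \qquad |\mathcal{O}_j^s \cap \mathcal{O}_k^{s_i}| = N(s, s_i, s_j).$$

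The heart of the argument---and the only step that is really more than bookkeeping---is to prove that $N(a, b, c)$ is invariant under all permutations of its three arguments. For each $e \in E$, multiplication by $e$ realizes a bijection between pairs $(u, v) \in y_a E \times y_b E$ with $u + v = y_c$ and those with $u + v = y_c e$. Summing over $e \in E$, or equivalently partitioning by $w = -y_c e \in y_c E$ (which uses $-1 \in E$), one obtains
$$|E| \cdot N(a, b, c) = \#\bigl\{(u, v, w) \in y_a E \times y_b E \times y_c E : u + v + w = 0\bigr\},$$
whose right-hand side is manifestly invariant under all permutations of $(a, b, c)$.

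With this in hand, the hypothesis $N(s_i, s_j, s) = 1$ forces $N(s, s_j, s_i) = 1$ and $N(s, s_i, s_j) = 1$, so both target intersections are singletons. To identify them, it suffices to check that $j$ lies in $\mathcal{O}_i^s \cap \mathcal{O}_k^{s_j}$ (true because $j - i \in y_s E$ by hypothesis and $j - k = -(k - j) \in y_{s_j} E$) and symmetrically that $i$ lies in $\mathcal{O}_j^s \cap \mathcal{O}_k^{s_i}$; these memberships pin down the singletons as $\{j\}$ and $\{i\}$ respectively. The main obstacle is the $S_3$-symmetry of $N$, but as described above this becomes transparent once the count is symmetrized via $-1 \in E$.
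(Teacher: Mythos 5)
Your proof is correct, but it takes a genuinely different route from the paper's. The paper argues uniqueness directly through the group action: if $j,j'$ both lay in $\mathcal{O}_i^{s}\cap\mathcal{O}_k^{s_j}$, it produces $\sigma=\Phi_i(e'^{-1}e)\in\Aut_i(X)$ with $\sigma(j')=j$, observes via Proposition \ref{D1}(iii) and the singleton hypothesis that $\sigma$ must also fix $k$, and then uses the freeness of the $E$-action away from the base point to force $e=e'$, i.e.\ $j=j'$; the second identity follows by exchanging the roles of $i$ and $j$. You instead reduce everything to the counting function $N(a,b,c)$ and establish the full $S_3$-symmetry of these intersection numbers by the zero-sum-triple double count --- a strictly stronger statement (it is the symmetry of the structure constants of the translation scheme attached to $E\subset\Z_p^*$), from which the lemma drops out. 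Both arguments rest on the same inputs, namely $\mathcal{O}_i^s=i+y_sE$ from Proposition \ref{prepa1} and $-1\in E$, but yours trades the paper's rigidity argument for a combinatorial identity that transports \emph{any} value of an intersection number around the three slots, not just the value $1$. One small caveat: the symmetrization $|E|\cdot N(a,b,c)=\#\{(u,v,w): u+v+w=0\}$ requires $|y_cE|=|E|$, so it is only valid for indices in $[1,r]$ (where all three orbits are full $E$-cosets) rather than on the range $[0,r]$ you declare; since the lemma's hypotheses put $s,s_i,s_j\in[1,r]$, this costs nothing here, but it is worth stating the restriction explicitly.
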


\begin{proof} We know by symmetry of element in $\mathcal{T}^X$ that $j\in\mathcal{O}_i^s\Leftrightarrow i\in\mathcal{O}_j^s$ hence, by symmetry, we just need to prove $\mathcal{O}_i^{s}\cap\mathcal{O}_k^{s_j}=\{j\}$.\smallskip\smallskip

$k\in\mathcal{O}_j^{s_j}\Rightarrow j\in\mathcal{O}_k^{s_j}$ so we have $j\in\mathcal{O}_i^{s}\cap\mathcal{O}_k^{s_j}$.\smallskip\smallskip

We can now prove that $\mathcal{O}_i^{s}\cap\mathcal{O}_k^{s_j}$ has no other element. \smallskip

Assume $j, j'\in \mathcal{O}_i^{s}\cap\mathcal{O}_k^{s_j}$. We have
$$k\in\mathcal{O}_i^{s_i}\cap\mathcal{O}_j^{s_j}\cap\mathcal{O}_{j'}^{s_j}$$
Moreover as $j,j'\in\mathcal{O}_i^{s}$ there exist $e,e'\in E$ such that
$$j=i+y_se\ \text{ and }\ j'=i+y_se'$$

We denote $\sigma:=\Phi_i(e'^{-1}e)$ where $\Phi_i$ is the isomorphism of the proposition \ref{prepa1}. Hence we have  $\sigma(i)=i$ and $\sigma(j')=i+y_se'e'^{-1}e=j$. Using $(iii)$ of the proposition \ref{D1} we obtain
$$\{\sigma(k)\}\subset\sigma(\mathcal{O}_i^{s_i}\cap
\mathcal{O}_{j'}^{s_j}\cap\mathcal{O}_{j}^{s_j})\subset \sigma(\mathcal{O}_i^{s_i}\cap
\mathcal{O}_{j'}^{s_j}) = \mathcal{O}_{\sigma(i)}^{s_i}\cap\mathcal{O}_{\sigma(j')}^{s_j}=
\mathcal{O}_{i}^{s_i}\cap\mathcal{O}_{j}^{s_j}=\{k\}$$

which mean $k=\sigma(k)$.  $k\in\mathcal{O}_{i}^{s_i}$ hence there exists  $e''\in E$ such that $k=i+y_{s_i}e''$ and we obtain 
$$i+y_{s_i}e''e'^{-1}e=\sigma(k)=k=i+y_{s_i}e''$$
and as $s_i\ne 0$ it implies that $e=e'$ hence $j=j'$, which ends the proof.
\end{proof}

Here is now the proof of the proposition \ref{NoSym}.

\begin{proof} We put $t_0^1=t_0^2=0$. We define the following element of  $C_{A(X)}$ for all $s\in[0,r]$.
$$h_s:=\left(M\otimes M\right) \circ \left(\id\otimes T_{t_s^2}\otimes T_{t_s^1}\otimes \id\right)\in C_{A(X)}(4,2)\  \ \ \ \ \ \ \ \ \ \ \ \ \ \ \ \ \ \ \ \ \ \ \ \ \ \ \ \ \ \ \ \ \ \ \ \ \ \ \ \ \ \ $$
$$f_{s}:=h_s\circ \left(\id\otimes (M^*\circ M)\otimes\id\right)\circ \left(T_s\otimes T_{t_s^1}\otimes T_{t_s^2}\otimes T_s\right)  \circ \left(M^*\otimes M^*\right) \in C_{A(X)}\left( 2,2   \right) $$
$$g_s:=(\id\otimes M)\circ(\id\otimes T_s\otimes\id)\circ(M^*\otimes \id)\in C_{A(X)}(2,2)\ \ \ \ \ \ \ \ \ \ \ \ \ \ \ \ \ \ \ \ \ \ \ \ \ \ \ \ \ \ \ \ \ \ \ \ \ \ \ \ \ \ \ \ \ \ \ \ \ \ \ \ \ \ \ \ \ \ \ \ \ \ \ \ \ \ \ \ \ \ \ \ \ \ \ \ \ \ \ \ \ \ \ \ $$
$$F:=\sum_{s=0}^{\lambda}{f_s\circ g_s}\in C_{A(X)}\left( 2,2   \right)  \ \ \ \ \ \ \ \ \ \ \ \ \ \ \ \ \ \ \ \ \ \ \ \ \ \ \ \ \ \ \ \ \ \ \ \ \ \ \ \ \ \ \ \ \ \ \ \ \ \ \ \ \ \ \ \ \ \ \ \ \ \ \ \ \ \ \ \ \ \ \ \ \ \ \ \ $$

First remark that $f_0=\id_{(\C^p)^{\otimes 2}}$.

Then for $s\in[1,r]$, we denote $k_s$ the element of $[0,p-1]$ such that
$$\mathcal{O}_{0}^{t_s^1}\cap\mathcal{O}_{y_s}^{t_s^2}=\{k_s\}$$

We now consider two elements $i$ and $j$ such that $j\in\mathcal{O}_{i}^{s}$. Let $\sigma\in\Aut(X)$ such that $\sigma(0)=i$. We know that $j_s\in\mathcal{O}_{0}^{s}$ hence $\sigma(j_s)\in\mathcal{O}_{i}^{s}$. So there exists $\sigma'\in\Aut_i(X)$ such that $\sigma'(\sigma(y_s))=j$. Then using $(iii)$ of proposition \ref{D1} we have
$$\{\sigma'\sigma(k_s)\}=\sigma'\sigma(\mathcal{O}_{0}^{t_s^1}\cap\mathcal{O}_{y_s}^{t_s^2})=
\mathcal{O}_{\sigma'\sigma(0)}^{t_s^1}\cap\mathcal{O}_{\sigma'\sigma(y_s)}^{t_s^2}=
\mathcal{O}_{i}^{t_s^1}\cap\mathcal{O}_{j}^{t_s^2}\ \ \ \ \ \ \ \ \ \ \ \ \ (1.1)$$

Then with lemma \ref{prepa2} we obtain also
$$\{i\}=\mathcal{O}_{\sigma'\sigma(k_s)}^{t_s^1}\cap\mathcal{O}_{j}^{s}\ \ \ \ \ \ \ \ \ \ \ \ \ (1.2)$$
$$\{j\}=\mathcal{O}_{i}^{s}\cap\mathcal{O}_{\sigma'\sigma(k_s)}^{t_s^2}\ \ \ \ \ \ \ \ \ \ \ \ \ (1.3)$$

Which allow us to make this following calculation for all $i,j\in[0,p-1]$ such that $j\in\mathcal{O}_{i}^{s}$:
\begin{eqnarray*}
f_s(e_i\otimes e_j)&=& h_s\circ\left(\id\otimes (M^*\circ M)\otimes\id\right)\left(  T_s(e_i)\otimes T_{t_s^1}(e_i)\otimes T_{t_s^2}(e_j)\otimes T_s(e_j) \right) \\
&=& h_s\left( T_s(e_i)\otimes   \sum_{k\in\mathcal{O}_{i}^{t_s^1}\cap\mathcal{O}_{j}^{t_s^2}}{(e_k\otimes e_k)}              \otimes T_s(e_j)            \right)  \\ 
&=&h_s \left(  T_s(e_i)\otimes   e_{\sigma'\sigma(k_s)}\otimes e_{\sigma'\sigma(k_s)}             \otimes T_s(e_j)     \right)\ \ \ \ \ \ \ \ \ \ \ \ \ \ \ \ \ \ \ \text{by (1.1)}\\ 
&=&\left(M\otimes M\right)  \left(  T_s(e_i)\otimes   T_{t_s^2}(e_{\sigma'\sigma(k_s)})\otimes T_{t_s^1}(e_{\sigma'\sigma(k_s)})           \otimes T_s(e_j)     \right)\\
&=&  \left(\sum_{a\in\mathcal{O}_{i}^{s}\cap\mathcal{O}_{\sigma'\sigma(k_s)}^{t_s^2}}{e_a} \right) \otimes \left(\sum_{b\in\mathcal{O}_{\sigma'\sigma(k_s)}^{t_s^1}\cap\mathcal{O}_{j}^{s}}{e_b} \right) \\ 
&=& e_j\otimes e_i \ \ \ \ \ \ \ \ \ \ \ \ \ \ \ \ \ \ \ \ \ \ \ \ \ \ \ \ \ \ \ \ \ \ \ \ \ \ \ \ \ \ \ \ \ \ \ \  \ \ \ \ \ \ \ \text{by (1.2) et (1.3)}\\
\end{eqnarray*}

Moreover for all $i,j\in[0,p-1]$ we have
$$g_s(e_i\otimes e_j)=(\id\otimes M)(e_i\otimes T_s(e_i)\otimes e_j)=\delta_{j\in\mathcal{O}_{i}^{s}} (e_i\otimes e_j) $$

We end the proof by proving that $S=F\in C_{A(X)}$.

$\forall i,j\in[0,p-1]$, we denote $s_j$ the only one integer such that $j\in\mathcal{O}_{i}^{s_j}$ and we obtain
\begin{eqnarray*}
F(e_i\otimes e_j)&=&\sum_{s=0}^{\lambda}{f_s\circ g_s(e_i\otimes e_j)}\\
&=& \sum_{s=0}^{\lambda}{  \delta_{j\in\mathcal{O}_{i}^{s}} f_s(e_i\otimes e_j)      }\\
&=&f_{s_j}(e_i\otimes e_j) \\ 
&=& e_j\otimes e_i\\
\end{eqnarray*}
\end{proof}

The assumptions of proposition \ref{NoSym} may be weakened by using this following lemma \ref{LL3}, to obtain the theorem \ref{NoSym2}.

\begin{lem}\label{LL3} This two following propositions are equivalent:\smallskip

\begin{itemize}
\renewcommand{\labelitemi}{$\bullet$}
\item There exist $s_1,s_2\in [1,r]$  such that $\left|\mathcal{O}_0^{s_1}\cap\mathcal{O}_{1}^{s_2}\right| =1$\smallskip

\item For all $s\in[1,r]$, there exist $t_s^1,t_s^2\in [1,\lambda]$  such that, $\left|\mathcal{O}_0^{t_s^1}\cap\mathcal{O}_{y_s}^{t_s^2}\right| =1$
\end{itemize}
\end{lem}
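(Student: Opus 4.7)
My plan is to treat the two implications separately. The direction from the universal statement to the existential one is immediate: specializing the ``for all $s$'' statement to $s = 1$ and using $y_1 = 1$, the pair $(t_1^1, t_1^2)$ directly furnishes $(s_1, s_2)$.

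The interesting direction is from the existential to the universal statement. The geometric obstacle is that there is generally no element of $\Aut(X)$ sending the pair $(0, 1)$ to $(0, y_s)$: any automorphism fixing $0$ is multiplication by some $e \in E$ by Proposition \ref{prepa1}, hence sends $1$ into $E$, whereas $y_s \notin E$ for $s \neq 1$. So Proposition \ref{D1}(iii) does not apply directly. To circumvent this, I will exploit the multiplicative structure recorded just before Proposition \ref{pcoh}: $\mathcal{O}_0^t = y_t E$ and $\mathcal{O}_i^t = i + y_t E$. The key observation is that the multiplication map $\mu_s : \Z_p \to \Z_p$, $x \mapsto y_s x$, is a bijection of $\Z_p$ that sends each non-trivial $E$-coset $y_t E$ to the coset $y_s y_t E$, and hence permutes the orbits $\mathcal{O}_0^1, \dots, \mathcal{O}_0^r$ among themselves.

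Given $s_1, s_2$ as in the hypothesis, I fix any $s \in [1, r]$ and apply $\mu_s$ to both orbits: $\mu_s(\mathcal{O}_0^{s_1}) = y_s y_{s_1} E$ and $\mu_s(\mathcal{O}_1^{s_2}) = y_s + y_s y_{s_2} E$. Writing $y_s y_{s_1} E = y_{t_s^1} E = \mathcal{O}_0^{t_s^1}$ and $y_s y_{s_2} E = y_{t_s^2} E$, so that $y_s + y_s y_{s_2} E = \mathcal{O}_{y_s}^{t_s^2}$, for suitable $t_s^1, t_s^2 \in [1, r]$, and using that $\mu_s$ preserves cardinalities of intersections, we obtain $|\mathcal{O}_0^{t_s^1} \cap \mathcal{O}_{y_s}^{t_s^2}| = |\mathcal{O}_0^{s_1} \cap \mathcal{O}_1^{s_2}| = 1$, which is the desired universal statement at $s$.

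No serious obstacle is expected. The only subtleties are to check that the indices $t_s^1, t_s^2$ land in $[1, r]$ rather than $0$, which follows from $y_s y_{s_i} \not\equiv 0 \pmod{p}$, and to resist invoking Proposition \ref{D1}(iii) with $\mu_s$, which in general is not an automorphism of $X$ itself but merely a bijection respecting the $E$-coset partition of $\Z_p^*$.
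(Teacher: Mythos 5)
Your proof is correct, but it does not follow the paper's route --- and for good reason. The paper's own one-line argument rests on the claim that for every $s\in[1,r]$ there is $\sigma_s\in\Aut(X)$ with $\sigma_s(0)=0$ and $\sigma_s(1)=y_s$, which would let one transport the intersection via Proposition \ref{D1}(iii). As you point out, this claim is false for $s\neq 1$: by Proposition \ref{prepa1} the stabilizer $\Aut_0(X)$ consists exactly of the multiplications by elements of $E$, so the $\Aut_0(X)$-orbit of $1$ is $E=\mathcal{O}_0^1$, and $y_s\notin E$ whenever $y_sE\neq E$. Your substitute --- the multiplication map $\mu_s(x)=y_sx$, which is a bijection of $\Z_p$ permuting the $E$-cosets and sending $\mathcal{O}_0^{s_1}=y_{s_1}E$ to $\mathcal{O}_0^{t_s^1}$ and $\mathcal{O}_1^{s_2}=1+y_{s_2}E$ to $y_s+y_sy_{s_2}E=\mathcal{O}_{y_s}^{t_s^2}$, hence preserving the cardinality of the intersection --- is exactly the right mechanism, even though $\mu_s$ is in general not an automorphism of $X$. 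Your bookkeeping is also in order: $\lambda=r$ so the target interval $[1,\lambda]$ is $[1,r]$, the indices $t_s^i$ avoid $0$ because $y_sy_{s_i}\not\equiv 0 \pmod p$, and the reverse implication is the specialization $s=1$, $y_1=1$. In short, your argument both proves the lemma and repairs a gap in the paper's justification; what the paper's approach would buy (uniform use of Proposition \ref{D1}(iii)) is unavailable here precisely because the required automorphism does not exist.
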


\begin{proof}It's a direct consequence of proposition \ref{D1} and definition of orbitals $\mathcal{O}^s$ because for all $s\in[1,r]$, there exists $\sigma_s \in\mathrm{Aut}(X)$ such that $\sigma_s(0)=0$ and $\sigma_s(1)=y_s$.
\end{proof}

\begin{theorem}\label{NoSym2} Let $X$ be a circulant $p$-graph such that
$$\text{There exist}\ \  s_1,s_2\in[1,r] \text{ such that}\ \left|\mathcal{O}_0^{s_1}\cap\mathcal{O}_{1}^{s_2}\right| =1$$
Then  $X$ has no quantum symmetry.
\end{theorem}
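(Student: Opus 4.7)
The plan is a short reduction to Proposition \ref{NoSym} via Lemma \ref{LL3}. Indeed, the hypothesis of Theorem \ref{NoSym2} is precisely the first of the two equivalent conditions in Lemma \ref{LL3}, while the hypothesis of Proposition \ref{NoSym} is precisely the second. So, assuming the existence of $s_1, s_2 \in [1,r]$ with $|\mathcal{O}_0^{s_1} \cap \mathcal{O}_1^{s_2}| = 1$, I would first invoke Lemma \ref{LL3} to promote this single-pair datum to the universal statement: for every $s \in [1,r]$ there exist $t_s^1, t_s^2 \in [1,\lambda]$ with $|\mathcal{O}_0^{t_s^1} \cap \mathcal{O}_{y_s}^{t_s^2}| = 1$. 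This is exactly the hypothesis of Proposition \ref{NoSym}, which then yields that $X$ has no quantum symmetry.

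The conceptual reason the reduction works is that, in a circulant $p$-graph, the $\Aut(X)$-action on pairs is rich enough to carry the singleton intersection witnessed at $(0,1)$ to every pair $(0, y_s)$. Explicitly, for each $s$ one produces an automorphism $\sigma_s$ relating $(0,1)$ to $(0, y_s)$, and property (iii) of Proposition \ref{D1} then gives $\sigma_s(\mathcal{O}_0^{s_1} \cap \mathcal{O}_1^{s_2}) = \mathcal{O}_0^{s_1} \cap \mathcal{O}_{y_s}^{s_2}$, still a singleton; one may therefore take $t_s^1 = s_1$ and $t_s^2 = s_2$ uniformly in $s$. Since both Lemma \ref{LL3} and Proposition \ref{NoSym} are already established, there is no real obstacle in this argument; the only bookkeeping is the identification $[1,\lambda] = [1,r]$, which holds by the relation $r = \lambda$ noted just before Proposition \ref{NoSym}.
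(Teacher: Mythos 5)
Your reduction is exactly the paper's proof: Theorem \ref{NoSym2} is obtained there by combining Lemma \ref{LL3} with Proposition \ref{NoSym}, with no further argument. One caveat on your supplementary explanation: a permutation fixing $0$ and sending $1$ to $y_s$ that lies in $\Aut(X)$ would belong to $\Aut_0(X)\cong E$ and hence exists only when $y_s\in E$, so the indices $t_s^1,t_s^2$ cannot in general be taken equal to $s_1,s_2$ uniformly (one should instead transport the singleton by the multiplication-by-$y_s$ map, which permutes the orbitals while shifting their labels) --- but since you invoke Lemma \ref{LL3} as an established black box, this does not affect the validity of your reduction.
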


\begin{remark} Using explicit description of $\mathcal{O}_{i}^{s}$ with the set $E$, the condition over $X$ in the theorem \ref{NoSym2} is equivalent to
$$\text{There exist}\ x,x'\in[1,p-1]\ \text{such that}\ \left|(xE)\cap(x'E+1)\right|=1$$
\end{remark}

 We denote $\Gamma_X=(\beta_{s_1s_2})$ the matrix with entries define by:
$$\beta_{s_1,s_2}=\left|\mathcal{O}_0^{s_1}\cap\mathcal{O}_{1}^{s_2}\right|=\left|(y_{s_1}E)\cap(y_{s_2}E+1)\right|$$

First the matrix $\Gamma_X$ is symmetrical since there exists $\sigma\in\Aut(X)$ such that $\sigma(0)=1$ and $\sigma(1)=0$. Then assumptions over $X$ in theorem \ref{NoSym2} are equivalent to say that at least one entry of $\Gamma_X$ is equal to $1$. Using this, we study quantum symmetries of $p$-graphs of types $4, 6, 8$ and $10$.

\subsection{Graphs of type $4$:}

\medskip

 By \ref{Borne0} we just need to study the case $p\le 36$. There is $4$ such graphs of type $4$ (counting only the one satisfying $S=E$). 
\begin{itemize}
\renewcommand{\labelitemi}{$\bullet$}
\item $X=C_{29}(12)$ ($r=7$):
$$\mathcal{O}_0^{1}=E=\{1,12,17,28\},\ \ \ \mathcal{O}_1^{2}=2E+1=\{3,6,25,28\}$$
$$\mathcal{O}_0^{1}\cap\mathcal{O}_1^{2}=\{28\}$$
so theorem \ref{NoSym2} holds.

\renewcommand{\labelitemi}{$\bullet$}
\item $X=C_{17}(4)$ ($r=4$):
$$\mathcal{O}_0^{1}=E=\{1,4,13,16\},\ \ \ \mathcal{O}_1^{2}=2E+1=\{3,9,10,16\}$$
$$\mathcal{O}_0^{1}\cap\mathcal{O}_1^{2}=\{16\}$$
so theorem \ref{NoSym2} holds.

\item $X=C_{13}(5)$ ($r=3$): 
$$\mathcal{O}_0^{1}=E=\{1,5,8,12\},\ \ \ \mathcal{O}_1^{2}=2E+1=\{3,4,11,12\}$$
$$\mathcal{O}_0^{1}\cap\mathcal{O}_1^{2}=\{12\}$$
so theorem \ref{NoSym2} holds.

\item $X=K_{5}$: the complete graph with $5$ vertices has quantum symmetries. 
\end{itemize}
\medskip

\subsection{Graphs of type $6$:}\medskip

 By \ref{Borne0} we just need to study the case $p\le 36$. There is $3$ such graphs of type $6$ (counting only the one satisfying $S=E$). 
\begin{itemize}
\renewcommand{\labelitemi}{$\bullet$}
\item $X=C_{31}(1,5,6)$ ($r=5$):
$$\mathcal{O}_0^{1}=E=\{1,5,6,25,26,30\},\ \ \ \mathcal{O}_1^{2}=2E+1=\{3,11,13,20,22,30\}$$
$$\mathcal{O}_0^{1}\cap\mathcal{O}_1^{2}=\{30\}$$
so theorem \ref{NoSym2} holds.

\renewcommand{\labelitemi}{$\bullet$}
\item $X=C_{19}(1,7,8)$ ($r=3$):
$$\mathcal{O}_0^{1}=E=\{1,7,8,11,12,18\},\ \ \ \mathcal{O}_1^{2}=2E+1=\{3,4,6,15,17,18\}$$
$$\mathcal{O}_0^{1}\cap\mathcal{O}_1^{2}=\{18\}$$
so theorem \ref{NoSym2} holds.

\item $X=C_{13}(3,4)$ ($r=2$): in this case we have
$$\mathcal{O}_0^{1}=E=\{1,3,4,9,10,12\},\ \ \ \mathcal{O}_0^{2}=2E=\{2,5,6,7,8,11\}$$
$$\mathcal{O}_1^{1}=E+1=\{0,2,4,5,10,11\},\ \ \ \mathcal{O}_1^{2}=2E+1=\{3,6,7,8,9,12\}$$
hence
$$\Gamma_X=\left( {
\begin{array}{cc}
2 & 3 \\
3 & 3 \\
\end{array}
} \right)$$
so theorem \ref{NoSym2} does not hold.

\item $X=K_{7}$: the complete graph with $7$ vertices has quantum symmetries. 
\end{itemize}
\smallskip

\subsection{Graphs of type $8$:}\medskip

 By \ref{Borne0} we just need to study the case $p\le 1296$. There is $48$ such graphs of type $8$ (counting only the one satisfying $S=E$). 
 Here is the study of the five which have less than $100$ vertices, with theorem \ref{NoSym2}.
\begin{itemize}
\renewcommand{\labelitemi}{$\bullet$}
\item $X=C_{97}(22,33,47)$ ($r=12$):
$$\mathcal{O}_0^{1}=E=\{1,22,33,47,50,64,75,96\},\ \ \ \mathcal{O}_1^{2}=2E+1=\{3,4,32,45,54,67,95,96\}$$
$$\mathcal{O}_0^{1}\cap\mathcal{O}_1^{2}=\{96\}$$
so theorem \ref{NoSym2} holds.

\renewcommand{\labelitemi}{$\bullet$}
\item $X=C_{89}(12,34,37)$ ($r=11$):
$$\mathcal{O}_0^{1}=E=\{1,12,34,37,52,55,77,88\},\ \ \ \mathcal{O}_1^{2}=2E+1=\{3,16,22,25,66,69,75,88\}$$
$$\mathcal{O}_0^{1}\cap\mathcal{O}_1^{2}=\{88\}$$
so theorem \ref{NoSym2} holds.

\renewcommand{\labelitemi}{$\bullet$}
\item $X=C_{73}(10,22,27)$ ($r=9$):
$$\mathcal{O}_0^{1}=E=\{1,10,22,27,46,51,63,72\},\ \ \ \mathcal{O}_1^{2}=2E+1=\{3,20,21,30,45,54,55,72\}$$
$$\mathcal{O}_0^{1}\cap\mathcal{O}_1^{2}=\{72\}$$
so theorem \ref{NoSym2} holds.

\item $X=C_{41}(3,9,14)$ ($r=5$):
$$\mathcal{O}_0^{2}=2E=\{2,6,13,18,23,28,35,39\},\ \ \ \mathcal{O}_1^{5}=8E+1=\{9,11,12,18,25,31,32,34\}$$
$$\mathcal{O}_0^{2}\cap\mathcal{O}_1^{5}=\{18\}$$
so theorem \ref{NoSym2} holds.

\item $X=C_{17}(2,4,8)$ ($r=2$): in this case we have
$$\mathcal{O}_0^{1}=E=\{1,2,4,8,9,13,15,16\},\ \ \ \mathcal{O}_0^{2}=3E=\{3,5,6,7,10,11,12,14\}$$
$$\mathcal{O}_1^{1}=E+1=\{0,2,3,5,9,10,14,16\},\ \ \ \mathcal{O}_1^{2}=3E+1=\{4,6,7,8,11,12,13,15\}$$
hence
$$\Gamma_X=\left( {
\begin{array}{cc}
3 & 4 \\
4 & 4 \\
\end{array}
} \right)$$
so theorem \ref{NoSym2} does not hold.
\end{itemize}\smallskip

The other graphs of type $8$ were compute the same way. For all of them theorem \ref{NoSym2} holds.

\medskip

\subsection{Graphs of type $10$:}\medskip

 By \ref{Borne0} we just need to study the case $p\le 1296$. There is $51$ such graphs of type $10$ (counting only the one satisfying $S=E$). 
 Here is the study of the five which have less than $100$ vertices, with theorem \ref{NoSym2}.
\begin{itemize}
\renewcommand{\labelitemi}{$\bullet$}
\item $X=C_{71}(5,14,17,25)$ ($r=7$):
$$\mathcal{O}_0^{1}=E=\{1,5,14,17,25,46,54,57,66,70\}$$
$$\mathcal{O}_1^{2}=2E+1=\{3,11,22,29,35,38,44,51,62,70\}$$
$$\mathcal{O}_0^{1}\cap\mathcal{O}_1^{2}=\{70\}$$
so theorem \ref{NoSym2} holds.

\item $X=C_{61}(3,9,20,27,34,41,52,58,60)$ ($r=6$):
$$\mathcal{O}_0^{2}=2E=\{2,6,7,18,21,40,43,54,55,59\}$$ 
$$\mathcal{O}_1^{4}=4E+1=\{5,13,15,20,26,37,43,48,50,58\}$$
$$\mathcal{O}_0^{2}\cap\mathcal{O}_1^{4}=\{43\}$$
so theorem \ref{NoSym2} holds.

\item $X=C_{41}(4,10,16,18)$ ($r=4$): we have
$$\Gamma_X=\left( {
\begin{array}{cccc}
0 & 3 & 2 & 4 \\
3 & 3 & 2 & 2 \\
2 & 2 & 4 & 2 \\
4 & 2 & 2 & 2 \\
\end{array}
} \right)$$

\item $X=C_{31}(2,4,8,15)$ ($r=3$): we have
$$\Gamma_X=\left( {
\begin{array}{ccc}
3 & 4 & 2  \\
4 & 2 & 4  \\
2 & 4 & 4  \\
\end{array}
} \right)$$

\item $X=K_{11}$: the complete graph with $11$ vertices has quantum symmetries. 
\end{itemize}\medskip

The other graphs of type $10$ with $p\geqslant 71$ were compute the same way. For all of them, theorem \ref{NoSym2} holds.\medskip

The two next section are devoted to the study of $C_{13}(3,4)$ and $C_{17}(2,4,8)$. In section $7$ and $8$ we proves that $C_{13}(3,4)$ and $C_{17}(2,4,8)$ have no quantum symmetry. Then all the $p$-graphs of type $4$, $6$ or $8$ (except complete graphs) have no quantum symmetry. Is it the case for bigger type ? It's an open question.

\medskip

\section{$C_{13}(3,4)$ has no quantum symmetry}

The graph $C_{13}(3,4)$ is $\mathcal{B}$-clos (by \ref{pcoh}) but don't satisfy assumptions of theorem  \ref{NoSym2}. Indeed, here are its orbitals:\medskip

 \ \ \ \ \ \ \ \ \ \ \ \ \ \ \ \ \ \ \  \ \ \ \ \ \begin{tabular}{ | c | c | c |  }
 \hline                 
   $i$ & $\mathcal{O}_{i}^{1}$ &  $\mathcal{O}_{i}^{2}$  \tabularnewline \hline
$0$ & $\{1,3,4,9,10,12\}$ & $\{2,5,6,7,8,11\}$ \tabularnewline \hline
$1$ & $\{2,4,5,10,11,0\}$ & $\{3,6,7,8,9,12\}$ \tabularnewline \hline
$2$ & $\{3,5,6,11,12,1\}$ & $\{4,7,8,9,10,0\}$ \tabularnewline \hline
$3$ & $\{4,6,7,12,0,2\}$ & $\{5,8,9,10,11,1\}$ \tabularnewline \hline
$4$ & $\{5,7,8,0,1,3\}$ & $\{6,9,10,11,12,2\}$ \tabularnewline \hline
$5$ & $\{6,8,9,1,2,4\}$ & $\{7,10,11,12,0,3\}$ \tabularnewline \hline
$6$ & $\{7,9,10,2,3,5\}$ & $\{8,11,12,0,1,4\}$ \tabularnewline
\hline
$7$ & $\{8,10,11,3,4,6\}$ & $\{9,12,0,1,2,5\}$ \tabularnewline
\hline
$8$ & $\{9,11,12,4,5,7\}$ & $\{10,0,1,2,3,6\}$ \tabularnewline
\hline
$9$ & $\{10,12,0,5,6,8\}$ & $\{11,1,2,3,4,7\}$ \tabularnewline
\hline
$10$ & $\{11,0,1,6,7,9\}$ & $\{12,2,3,4,5,8\}$ \tabularnewline
\hline
$11$ & $\{12,1,2,7,8,10\}$ & $\{0,3,4,5,6,9\}$ \tabularnewline
\hline
$12$ & $\{0,2,3,8,9,11\}$ & $\{1,4,5,6,7,10\}$ \tabularnewline
 \hline
 \end{tabular}\bigskip

 We remark that elements of $\mathcal{O}_{i}^{1}$ are the neighbors of $i$ and the elements of $\mathcal{O}_{i}^{2}$ are the vertices non connected to $i$ in $C_{13}(3,4)$. It means that for every two couples $(i,j)$ and $(k,l)$ of vertices of same nature (connected or not) there exists an automorphism $\sigma\in\mathrm{Aut}(C_{13}(3,4))$ such that: 
 $$\sigma(i)=k\ \ \text{and}\ \ \sigma(j)=l$$

\begin{defin}We denote $D=M\circ (\id\otimes M)$ the function of $C_{A(C_{13}(3,4))}(3,1)$. 

Then for all $i,j,k\in[0,12]$, we have:
 $$D(e_i\otimes e_j\otimes e_k)=\delta_{i,j}\delta_{i,k}e_i$$
\end{defin} \medskip

 \begin{lem}\label{342} There exists an intertwinner $G_1\in C_{A(C_{13}(3,4))}(2,1)$ such that:
 $$G_1(e_0\otimes e_2)=e_1$$
 
 \end{lem}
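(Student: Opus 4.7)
The plan is to construct $G_1$ explicitly as a difference of two intertwiners that I can build without ever invoking the swap $S$. Set $\Phi := M \circ (T_1 \otimes T_1)$, which lies in $C_{A(X)}(2,1)$ by Theorem \ref{T1}, and whose matrix coefficients are $\Phi^k_{ij} = \mathbf{1}_{k \in \mathcal{O}_i^{1} \cap \mathcal{O}_j^{1}}$, i.e.\ the indicator that $k$ is a common neighbor of $i$ and $j$. Applied at $(0,2)$, the table of orbitals gives $\Phi(e_0 \otimes e_2) = e_1 + e_3 + e_{12}$, so I only need to subtract an intertwiner $X \in C_{A(C_{13}(3,4))}(2,1)$ satisfying $X(e_0 \otimes e_2) = e_3 + e_{12}$.

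The key observation is the asymmetry visible in the table: among $\{1,3,12\}$, the pair $\{3,12\}$ is adjacent ($9 \in E$), while $1$ is adjacent to neither. Thus the vertices I want to keep, namely $3$ and $12$, are characterized as common neighbors of $0$ and $2$ that themselves have a neighbor among the common neighbors of $0$ and $2$. This suggests defining
$$X := \bigl(M \circ (\id \otimes T_1)\bigr) \circ (\Phi \otimes \id) \circ (\id \otimes \Phi \otimes \id) \circ (M^* \otimes M^*).$$
Each constituent lives in the tensor category generated by $U, M, d_X$, so $X \in C_{A(X)}(2,1)$ by Proposition \ref{TannaQ}; crucially, the diagram is planar and no swap is used.

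The next step is to trace through this composition. Starting from $e_i \otimes e_j$, the duplication $M^* \otimes M^*$ produces $e_i \otimes e_i \otimes e_j \otimes e_j$; the inner $\id \otimes \Phi \otimes \id$ collapses the middle two factors into $\sum_{k' \in N(i) \cap N(j)} e_i \otimes e_{k'} \otimes e_j$; the outer $\Phi \otimes \id$ applied to the first two slots yields $\sum_{k,k'} \mathbf{1}_{k' \in N(i)\cap N(j)} \mathbf{1}_{k \in N(i) \cap N(k')} \, e_k \otimes e_j$; and finally $M \circ (\id \otimes T_1)$ enforces $k \sim j$. Reorganizing the resulting double sum gives
$$X(e_i \otimes e_j) = \sum_k \mathbf{1}_{k \in N(i) \cap N(j)} \cdot \bigl|N(i) \cap N(j) \cap N(k)\bigr| \, e_k.$$
Specializing to $(i,j)=(0,2)$ and $k \in \{1,3,12\}$, the counts are $0,1,1$ respectively, so $X(e_0 \otimes e_2) = e_3 + e_{12}$ as desired.

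Setting $G_1 := \Phi - X$ then produces an element of $C_{A(C_{13}(3,4))}(2,1)$ with $G_1(e_0 \otimes e_2) = e_1$. The only real work is the bookkeeping in the tensor composition; the main point one must verify, which I expect to be essentially immediate from the definitions of $\Phi, M, M^*, T_1$, is that no step of the construction introduces the swap or any operator outside the category generated by $U, M, d_X$.
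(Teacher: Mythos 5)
Your construction is correct. The operator $\Phi=M\circ(T_1\otimes T_1)$ indeed sends $e_0\otimes e_2$ to $e_1+e_3+e_{12}$, your $X$ has coefficient $\left|\mathcal{O}_0^1\cap\mathcal{O}_2^1\cap\mathcal{O}_k^1\right|$ on each common neighbour $k$ of $0$ and $2$, and these counts are $0,1,1$ for $k=1,3,12$ (the edge $3\sim 12$ comes from $12-3=9\in E$, while $1$ is adjacent to neither $3$ nor $12$), so $G_1=\Phi-X$ maps $e_0\otimes e_2$ to $e_1$; since every constituent ($M$, $M^*$, $T_1=d_X$, $\id=M\circ M^*$) lies in $C_{A(C_{13}(3,4))}$ and no swap appears, $G_1\in C_{A(C_{13}(3,4))}(2,1)$ by Proposition \ref{TannaQ}. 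The strategy is the same as the paper's --- produce two planar words in $M$, $M^*$ and the orbital operators, evaluate at $e_0\otimes e_2$, and take the linear combination isolating $e_1$ --- but the explicit words differ. The paper sets $G_1=G-H_1$ where $G$ routes through the common \emph{non}-neighbours $\{7,8\}$ of $0$ and $2$ via $H_4=M\circ(T_2\otimes T_2)$ and the triple intersection $D=M\circ(\id\otimes M)$, landing on $2e_1+e_3+e_{12}$ before subtracting $H_1$; you instead use only $T_1=d_X$ and detect $e_3+e_{12}$ directly by counting triangles inside $\mathcal{O}_0^1\cap\mathcal{O}_2^1$. Your version is slightly leaner (a single orbital operator, and the subtracted term hits exactly $e_3+e_{12}$ with no multiplicities to cancel), whereas the paper's choice reuses the operators $D$ and $H_{i}$ that it needs again in the proof of Lemma \ref{341}.
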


 \begin{proof} We put $H_1=M\circ (T_1\otimes T_1)$ and $H_4=M\circ (T_2\otimes T_2)$, then:
 \begin{eqnarray*}
H_1(e_0\otimes e_2)&=&M\circ (T_1\otimes T_1)(e_0\otimes e_2)\\
&=&M\left((e_1+e_3+e_4+e_9+e_{10}+e_{12})\otimes(e_1+e_3+e_5+e_6+e_{11}+e_{12})\right)\\
&=& e_1+e_3+e_{12}\\
H_4(e_0\otimes e_2)&=&M\circ (T_2\otimes T_2)(e_0\otimes e_2)\\
&=&M\left((e_2+e_5+e_6+e_7+e_8+e_{11})\otimes(e_0+e_4+e_7+e_8+e_9+e_{10})\right)\\
&=& e_7+e_8\\
T_2(e_7+e_8)&=&2e_0+2e_1+2e_2+e_3+e_5+e_6+e_9+e_{10}+e_{12}
\end{eqnarray*}

We define the intertwinner $G\in C_{A(C_{13}(3,4))}(2,1)$  by
$$G=D \circ (T_1\otimes T_2\otimes T_1)\circ (\id\otimes H_4\otimes \id)\circ (M^*\otimes M^*)$$

So we get:
\begin{eqnarray*}
G(e_0\otimes e_2)&=&D \circ (T_1\otimes T_2\otimes T_1)\circ (\id\otimes H_4\otimes \id)\circ (M^*\otimes M^*)(e_0\otimes e_2)\\
&=&D \circ (T_1\otimes T_2\otimes T_1)\left(e_0\otimes (e_7+e_{8})\otimes e_2\right)\\
&=& D\left((e_1+e_3+e_4+e_9+e_{10}+e_{12})\otimes T_2(e_7+e_{8})\otimes (e_1+e_3+e_5+e_6+e_{11}+e_{12}) \right)\\
&=& 2e_1+e_3+e_{12}
\end{eqnarray*}

Finally with $G_1=G-H_1\in  C_{A(C_{13}(3,4))}(2,1)$ we have:
$$G_1(e_0\otimes e_2)= 2e_1+e_3+e_{12}-(e_1+e_3+e_{12})=e_1$$

 as required.
 \end{proof}

 \begin{lem}\label{341} For all $k\in[0,12]$  there exists an intertwinner $F_k\in C_{A(C_{13}(3,4))}(2,1)$ such that:
 $$F_k(e_0\otimes e_1)=e_k$$
 
 \end{lem}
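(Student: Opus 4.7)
The plan is to mimic the proof of Lemma~\ref{342}, constructing the intertwiners $F_k$ case by case. The cases $k=0$ and $k=1$ are handled directly: set $F_0 = M \circ (\id \otimes T_1)$ and $F_1 = M \circ (T_1 \otimes \id)$. Since $0$ and $1$ are adjacent in $C_{13}(3,4)$ (equivalently $0 \in \mathcal{O}_1^1$ and $1 \in \mathcal{O}_0^1$), a direct evaluation gives $F_0(e_0 \otimes e_1) = e_0$ and $F_1(e_0 \otimes e_1) = e_1$.

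For the remaining $11$ values of $k$, the coarse intertwiners $M \circ (T_s \otimes T_t)$ applied to $e_0 \otimes e_1$ produce only the four orbital-intersection sums $e_4 + e_{10}$, $e_3 + e_9 + e_{12}$, $e_2 + e_5 + e_{11}$ and $e_6 + e_7 + e_8$, which together cover $\{2,\dots,12\}$ but fail to separate the individual indices. A finer construction is required, analogous to the $G_1$ of Lemma~\ref{342}.

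For each $k \in \{2, \ldots, 12\}$, I build $F_k$ as $G^{(k)} - R_k$, where $G^{(k)}$ is a composition of the same form as in Lemma~\ref{342}, namely $D \circ (T_a \otimes T_b \otimes T_c) \circ (\id \otimes H \otimes \id) \circ (M^* \otimes M^*)$, with $a,b,c \in \{1,2\}$ and $H = M \circ (T_s \otimes T_t)$ chosen to target the class containing $k$, and $R_k$ is a correction built from basic intertwiners that cancels the spurious contributions, exactly as $H_1$ does in Lemma~\ref{342}. Alternatively, one may use $G_1$ itself as an additional building block: by the intertwiner property of $G_1$ and the transitivity of $\mathrm{Aut}(C_{13}(3,4))$ on non-connected pairs (the orbit of $(0,2)$), one has $G_1(e_i \otimes e_j) = e_{(i+j)/2}$ in $\Z_{13}$ (with $2^{-1} = 7$) for every non-connected pair $(i,j)$, so composing $G_1$ with a routing intertwiner that sends $(e_0, e_1)$ to non-connected pairs with midpoint $k$ produces $e_k$ up to correctable error.

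The main obstacle is the explicit bookkeeping of the corrections $R_k$: the behaviour of $G_1$ on connected or diagonal pairs is not governed by the midpoint formula (it depends on the specific construction in Lemma~\ref{342}), so the spurious contributions must be identified by hand for each $k$. The $11$ required verifications reduce to finite orbital calculations in $\Z_{13}$, which are mechanical but tedious; putting them together yields all the intertwiners $F_k$ and completes the proof.
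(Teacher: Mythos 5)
Your treatment of $k=0,1$ is fine ($M\circ(\id\otimes T_1)$ and $M\circ(T_1\otimes\id)$ do send $e_0\otimes e_1$ to $e_0$ and $e_1$, for the same reason as the paper's $M\circ(\id\otimes(U\circ U^*))$), and your strategy for $k\in\{2,\dots,12\}$ is the same in spirit as the paper's. But there is a genuine gap: for those eleven values you never actually exhibit the intertwiners. The entire content of the lemma is that the evaluation map $C_{A(C_{13}(3,4))}(2,1)\to\C^{13}$, $F\mapsto F(e_0\otimes e_1)$, is surjective, and this is essentially equivalent to the absence of quantum symmetry on the connected-pair orbit; it cannot be dismissed as ``mechanical but tedious'' bookkeeping, because what must be shown is precisely that the allowed building blocks generate enough intertwiners to separate the four blocks $\{4,10\}$, $\{3,9,12\}$, $\{2,5,11\}$, $\{6,7,8\}$ into singletons. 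Asserting that a correction $R_k$ ``cancels the spurious contributions'' presupposes the conclusion.

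Moreover, your proposed template --- a single layer $G^{(k)}-R_k$ with $G^{(k)}=D\circ(T_a\otimes T_b\otimes T_c)\circ(\id\otimes H\otimes\id)\circ(M^*\otimes M^*)$ and $R_k$ a basic correction --- is not what actually works uniformly. In the paper's proof only $F_7$, $F_2$ and $F_{12}$ are obtained in one such step (e.g.\ $F_7$ works because $T_1(e_4+e_{10})$ has coefficient $2$ on $e_7$ but $1$ on $e_6,e_8$, so subtracting $H_4$ isolates $e_7$); the remaining $F_k$ are obtained by \emph{bootstrapping}, feeding already-constructed $F_k$'s back in as middle factors ($F_3$ and $F_{11}$ use $F_7$; $F_4$ uses $F_9$; $F_8$ uses $F_3$) and by linear combinations such as $F_5=H_3-F_2-F_{11}$ and $F_6=H_4-F_7-F_8$. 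This iterative structure is the actual idea of the proof and is absent from your proposal. Your observation that $G_1(e_i\otimes e_j)=e_{(i+j)/2}$ on non-connected pairs is correct (and a nice reformulation via $\Aut(X)=\Z_{13}\rtimes E$), but it too is left unexploited: you do not produce a single concrete $F_k$ for $k\ge 2$ from it. To repair the proof you must either reproduce an explicit chain of constructions as in the paper, or give a structural argument (e.g.\ on the algebra generated by $T_1,T_2$ under composition and the Hadamard product inside $C_{A(X)}$) showing that the separation must occur.
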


 \begin{proof}\

First we define \fbox{$F_0=M\circ(\id\otimes (U\circ U^*))$} and \fbox{$F_1=M\circ( (U\circ U^*))\otimes\id)$} to have:
 $$F_0(e_0\otimes e_1)=e_0,\ \ \text{ and }\ \  F_1(e_0\otimes e_1)=e_1$$
 
 Then, we consider the following intertwinners of $C_{A(C_{13}(3,4))}(2,1)$:
 $$H_1=M\circ (T_1\otimes T_1),\ \ H_2=M\circ (T_1\otimes T_2),\ \ H_3=M\circ (T_2\otimes T_1),\ \ H_4=M\circ (T_2\otimes T_2)$$
and we obtain:
 $$H_1(e_0\otimes e_1)=e_4+e_{10},\ \ \ H_2(e_0\otimes e_1)=e_3+e_9+e_{12}$$
   $$H_3(e_0\otimes e_1)=e_2+e_5+e_{11},\ \ \ H_4(e_0\otimes e_1)=e_6+e_{7}+e_8$$
 
We remark that: $T_1(e_4+e_{10})=2e_0+2e_1+2e_7+e_3+e_5+e_6+e_8+e_9+e_{11}$, so we obtain:
 \begin{eqnarray*}
& &D \circ (T_2\otimes T_1\otimes T_2)\circ (\id\otimes H_1\otimes \id)\circ (M^*\otimes M^*)(e_0\otimes e_1)\\
&=&D \circ (T_2\otimes T_1\otimes T_2)\left(e_0\otimes (e_4+e_{10})\otimes e_1\right)\\
&=& D\left((e_2+e_5+e_6+e_7+e_8+e_{11})\otimes T_1(e_4+e_{10})\otimes (e_3+e_6+e_7+e_8+e_9+e_{12}) \right)\\
&=& 2e_7+e_6+e_8
\end{eqnarray*}
 hence \fbox{$F_7=D \circ (T_2\otimes T_1\otimes T_2)\circ (\id\otimes H_1\otimes \id)\circ (M^*\otimes M^*)-H_4$} works. \smallskip

Then: \fbox{$F_3=D\circ (T_1\otimes T_1\otimes T_2)\circ (\id\otimes F_7\otimes \id)\circ (M^*\otimes M^*)$} and 

\fbox{$F_{11}=D\circ (T_2\otimes T_1\otimes T_1)\circ (\id\otimes F_7\otimes \id)\circ (M^*\otimes M^*)$} work since:
\begin{eqnarray*}
F_3(e_0\otimes e_1)&=&D\circ (T_1\otimes T_1\otimes T_2)\circ (\id\otimes F_7\otimes \id)(e_0\otimes e_0\otimes e_1\otimes  e_1)\\
&=&D\circ (T_1\otimes T_1\otimes T_2)(e_0\otimes e_7\otimes e_1)\\
&=& D\left(T_1(e_0)\otimes (e_3+e_4+e_6+e_7+e_{10}+e_{11})\otimes T_2(e_1)\right)=e_3
\end{eqnarray*}
and
\begin{eqnarray*}
F_11(e_0\otimes e_1)&=&D\circ (T_2\otimes T_1\otimes T_1)\circ (\id\otimes F_7\otimes \id)(e_0\otimes e_0\otimes e_1\otimes  e_1)\\
&=&D\circ (T_2\otimes T_1\otimes T_1)(e_0\otimes e_7\otimes e_1)\\
&=& D\left(T_2(e_0)\otimes (e_3+e_4+e_6+e_7+e_{10}+e_{11})\otimes T_1(e_1)\right)=e_{11}
\end{eqnarray*}
 
 Finally we use:
 \begin{eqnarray*}
T_2(e_4+e_{10})&=&2e_2+2e_{12}+e_3+e_4+e_5+e_6+e_8+e_9+e_{10}+e_{11}\\
T_2(e_9)&=&e_1+e_{2}+e_3+e_4+e_7+e_{11}\\
T_2(e_3)&=&e_1+e_{5}+e_8+e_9+e_{10}+e_{11}
\end{eqnarray*}
 
 to find the other intertwinners we use: \medskip
 
\fbox{$F_2=D \circ (T_2\otimes T_2\otimes T_1)\circ (\id\otimes H_1\otimes \id)\circ (M^*\otimes M^*)-H_3$}.\smallskip

\fbox{$F_{12}=D \circ (T_1\otimes T_2\otimes T_2)\circ (\id\otimes H_1\otimes \id)\circ (M^*\otimes M^*)-H_2$}.\smallskip

\fbox{$F_{5}=H_3-F_2-F_{11}$}\ \ \ \ \  \fbox{$F_{9}=H_2-F_3-F_{12}$}.\smallskip

\fbox{$F_4=D\circ (T_1\otimes T_2\otimes T_1)\circ (\id\otimes F_9\otimes \id)\circ (M^*\otimes M^*)$}\smallskip

\fbox{$F_8=D\circ (T_2\otimes T_2\otimes T_2)\circ (\id\otimes F_{3}\otimes \id)\circ (M^*\otimes M^*)$}\smallskip

\fbox{$F_{10}=H_1-F_4$}\ \ \ \ \  \fbox{$F_{6}=H_4-F_7-F_{8}$}.\smallskip

These functions well satisfy $F_k(e_0\otimes e_1)=e_k$ as required
 \end{proof}
 \smallskip
\smallskip \smallskip

 \begin{theorem} The graph $C_{13}(3,4)$ has no quantum symmetry.
 \end{theorem}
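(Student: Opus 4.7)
The plan is to apply Corollary \ref{CorS}: since $X = C_{13}(3,4)$ is $\mathcal{B}$-clos, showing it has no quantum symmetry amounts to producing the flip $S$ as an element of $C_{A(X)}(2,2)$. Lemmas \ref{341} and \ref{342} supply the families $F_0, \dots, F_{12}$ and $G_1$ in $C_{A(X)}(2,1)$, with $F_k(e_0 \otimes e_1) = e_k$ and $G_1(e_0 \otimes e_2) = e_1$. Since $\mathrm{Aut}(X) = \Z_{13} \rtimes E$ (with $E = \{1,3,4,9,10,12\}$) acts freely and transitively on each of the three $\mathrm{Aut}(X)$-orbits of ordered pairs of vertices---the diagonal, the connected pairs, and the non-connected pairs---any element of $C_{A(X)}(2,2) \subset C_{\mathrm{Aut}(X)}(2,2)$ is uniquely determined by its values on the three representatives $e_0 \otimes e_0$, $e_0 \otimes e_1$, $e_0 \otimes e_2$. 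In particular, the intertwining property forces $F_0(e_i \otimes e_j) = e_i$ and $F_1(e_i \otimes e_j) = e_j$ on every connected pair $(i,j)$.

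First I would extend Lemma \ref{341} to the non-connected orbit by repeating that proof with $G_1$ in place of the seed $F_7$ (adjusting the orbit labels so the arguments flow through $\mathcal{O}_0^2$ rather than $\mathcal{O}_0^1$), yielding a companion family $G_0, \dots, G_{12} \in C_{A(X)}(2,1)$ with $G_k(e_0 \otimes e_2) = e_k$ for every $k$. Then I would assemble $S$ as a sum $S = (M^* \circ M) + S_{\mathrm{conn}} + S_{\mathrm{nc}}$, each summand lying in $C_{A(X)}(2,2)$: the operator $M^* \circ M$ reproduces the flip on the diagonal and vanishes elsewhere; $S_{\mathrm{conn}}$, built from the $F_k$'s, their adjoints $F_k^*$, $M$, $M^*$, $T_s$ and $D$, is designed so that $S_{\mathrm{conn}}(e_0 \otimes e_1) = e_1 \otimes e_0$ while $S_{\mathrm{conn}}(e_0 \otimes e_0) = S_{\mathrm{conn}}(e_0 \otimes e_2) = 0$; and $S_{\mathrm{nc}}$, built analogously from the $G_k$'s, satisfies $S_{\mathrm{nc}}(e_0 \otimes e_2) = e_2 \otimes e_0$ and annihilates the other two orbit representatives. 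Transitivity on each orbit then guarantees that this sum agrees with the flip on every pair, so $S \in C_{A(X)}$ and Corollary \ref{CorS} concludes.

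The main obstacle is the explicit production of $S_{\mathrm{conn}}$ and $S_{\mathrm{nc}}$, which involves two layers of difficulty. First, one must assemble candidate compositions of the form $(F_a \otimes F_b) \circ \Psi$ for a suitable $\Psi \in C_{A(X)}(2,4)$ built from $M^*$, $T_s$, $D$ and adjoints $F_k^*$, chosen so that after applying $F_a \otimes F_b$ one reads off the second-then-first coordinates of a connected pair and so outputs $e_1 \otimes e_0$ on the representative; circularity is avoided precisely because the $F_k^*$ provide the required lift to four tensor factors without appealing to $S$. Second, one must subtract correction terms to kill the (generally non-zero) contributions that the same composition produces on $e_0 \otimes e_0$ and on $e_0 \otimes e_2$. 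This book-keeping is parallel in spirit to the cancellations of $H_1$ and $H_4$ in the proof of Lemma \ref{341}, and once completed it furnishes $S \in C_{A(X)}(2,2)$, finishing the proof.
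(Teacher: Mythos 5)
Your overall strategy is the paper's: decompose the flip according to the three $\mathrm{Aut}(X)$-orbits of ordered pairs, use the $F_k$ of Lemma \ref{341} and $G_1$ of Lemma \ref{342}, and conclude via Corollary \ref{CorS}. Your decomposition $S=(M^*\circ M)+S_{\mathrm{conn}}+S_{\mathrm{nc}}$ is exactly the paper's $H=g_0+F\circ g_1+G\circ g_2$. However, there is a genuine gap: the operators $S_{\mathrm{conn}}$ and $S_{\mathrm{nc}}$ are only specified by the properties you want them to have (``is designed so that $S_{\mathrm{conn}}(e_0\otimes e_1)=e_1\otimes e_0$''), and you explicitly defer their construction as ``the main obstacle.'' That construction is the entire content of the theorem; everything else is routine equivariance. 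The missing idea is the following: pick a common neighbor of $0$ and $1$ (the vertex $4$), duplicate both legs with $M^*\otimes M^*$ to get $e_0\otimes e_0\otimes e_1\otimes e_1$, apply $\id\otimes(M^*\circ F_4)\otimes\id$ to produce $e_0\otimes e_4\otimes e_4\otimes e_1$, and then apply $F_x\otimes F_y$ where $x,y$ are chosen (via the automorphisms carrying $(0,1)$ to $(0,4)$ and to $(4,1)$) so that $F_x(e_0\otimes e_4)=e_1$ and $F_y(e_4\otimes e_1)=e_0$. This gives the explicit
$$F=(F_x\otimes F_y)\circ\bigl(\id\otimes(M^*\circ F_4)\otimes\id\bigr)\circ(M^*\otimes M^*),$$
and equivariance under $\mathrm{Aut}(X)$ (which acts transitively on connected ordered pairs) propagates $F(e_0\otimes e_1)=e_1\otimes e_0$ to $F(e_i\otimes e_j)=e_j\otimes e_i$ for all $i\sim j$. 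The non-connected orbit is handled the same way by routing $(0,2)$ through their common neighbor $1$ using $G_1$. Your suggestion that the lift to four tensor factors requires the adjoints $F_k^*$ is a red herring; $M^*\otimes M^*$ does the job, and no adjoints of the $F_k$ appear.

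A secondary, smaller issue: you propose to rebuild an entire family $G_0,\dots,G_{12}$ with $G_k(e_0\otimes e_2)=e_k$ by redoing the combinatorial computations of Lemma \ref{341} on the non-edge orbit. This is unverified as stated and also unnecessary: the single map $G_1$ together with the already-constructed $F_k$ suffices, since $(F_2\otimes F_z)\circ(\id\otimes(M^*\circ G_1)\otimes\id)\circ(M^*\otimes M^*)$ sends $e_0\otimes e_2$ to $e_2\otimes e_0$ using only intertwiners evaluated on connected pairs. Until the explicit compositions above (or equivalent ones) are written down and checked, your argument is a plan rather than a proof.
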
\medskip

  \begin{proof} We first remark a property about the functions $F_k$. For any \underline{connected} vertices $i$ and $j$, as $0$ and $1$ are connected there exists $\sigma\in\mathrm{Aut}(C_{13}(3,4))$ such that: $\sigma(0)=i$ and $\sigma(1)=j$. Hence, for all  $k\in[0,12]$:
$$F_k(e_i\otimes e_j)=F_k(e_{\sigma(0)}\otimes e_{\sigma(1)})=P_{\sigma}\circ F_k(e_0\otimes e_1)=e_{\sigma(k)}\ \ \ \ (*)$$

Then we need to built new intertwinners:

\noindent \begin{itemize}
\renewcommand{\labelitemi}{$\bullet$}
\item  The vertex $4$ is a neighbor of $0$ and $1$ so we can consider $\mu, \mu'\in\mathrm{Aut}(C_{13}(3,4))$ such that:
$$\mu(0)=0,\ \ \mu(1)=4,\ \ \mu'(0)=4,\ \ \mu'(1)=1$$
and we denote by $x, y\in[0,12]$ the vertices such that $\mu(x)=1$ and $\mu'(y)=0$ to define:
\begin{center}
\fbox{$F=(F_{x}\otimes F_{y})\circ (\id\otimes (M^*\circ F_4)\otimes \id)\circ (M^*\otimes M^*)$} 
\end{center} 
So by $(*)$ we have:
\begin{eqnarray*}
F(e_0\otimes e_1)&=&(F_{x}\otimes F_{y})\circ (\id\otimes (M^*\circ F_4)\otimes \id)(e_0\otimes e_0\otimes e_1\otimes  e_1)\\
&=&(F_{x}\otimes F_{y})(e_0\otimes e_4\otimes e_4 \otimes  e_1)=F_x(e_0\otimes e_4)\otimes F_y(e_4\otimes e_1)\\
&=& e_{\mu(x)}\otimes e_{\mu'(y)}=e_1\otimes e_0
\end{eqnarray*}
Then if $i$ and $j$ are connected, with $\sigma$ such that $\sigma(0)=i$ and $\sigma(1)=j$ we have:
$$F(e_i\otimes e_j)=F(e_{\sigma(0)}\otimes e_{\sigma(1)})=P_{\sigma}^{\otimes 2}\circ F(e_0\otimes e_1)=e_{\sigma(1)}\otimes e_{\sigma(0)}=e_j\otimes e_i$$

\item  The vertex $1$ is a neighbor of $0$ and $2$ so we can consider $\tau\in\mathrm{Aut}(C_{13}(3,4))$ such that:
$$\tau(0)=1\ \text{ and }\  \tau(1)=2$$ et we denote by $z\in[|0,12|]$ the integer such that $\tau(z)=0$ to define:
\begin{center}
\fbox{$G=(F_2\otimes F_{z})\circ (\id\otimes (M^*\circ G_1)\otimes \id)\circ (M^*\otimes M^*)$}
\end{center}

 where $G_1$ is the function of the lemma \ref{342}. Then by $(*)$, we have:
\begin{eqnarray*}
G(e_0\otimes e_2)&=&(F_2\otimes F_{z})\circ (\id\otimes (M^*\circ G_1)\otimes \id)(e_0\otimes e_0\otimes e_2\otimes  e_2)\\
&=&(F_2\otimes F_{z})(e_0\otimes e_1\otimes e_1 \otimes  e_2)=F_2(e_0\otimes e_1)\otimes F_z(e_1\otimes e_2)\\
&=& e_{2}\otimes e_{\tau(z)}=e_2\otimes e_0
\end{eqnarray*}

If $i$ and $j$ are not connected, with $\sigma$ such that $\sigma(0)=i$ and $\sigma(2)=j$ we have:
$$G(e_i\otimes e_j)=G(e_{\sigma(0)}\otimes e_{\sigma(2)})=P_{\sigma}^{\otimes 2}\circ G(e_0\otimes e_2)=e_{\sigma(2)}\otimes e_{\sigma(0)}=e_j\otimes e_i$$
\item For $s\in\{0,1,2\}$ we define:
\begin{center}
\fbox{$g_s:=(\id\otimes M)\circ(\id\otimes T_s\otimes\id)\circ(M^*\otimes \id)\in C_{A(C_{13}(3,4))}(2,2)$}
\end{center}
This intertwinner are easy because:
$$g_0(e_i\otimes e_j)=\mathds{1}_{i=j}(e_i\otimes e_i),\ \  g_1(e_i\otimes e_j)=\mathds{1}_{i\sim j}(e_i\otimes e_j),\ \ g_0(e_i\otimes e_j)=\mathds{1}_{i\not\sim j}(e_i\otimes e_j)$$

\item To finish the proof we use the last intertwinner:
\begin{center}
\fbox{$H:=g_0+F\circ g_1+G\circ g_2\in C_{A(C_{13}(3,4))}\left( 2,2   \right) $}
\end{center}
We have:
$$H(e_i\otimes e_j)=\left\lbrace\begin{array}{l}
e_i\otimes e_i \ \ \ \ \ \ \ \ \ \text{ if }i=j \\
F(e_i\otimes e_j) \ \ \ \ \text{ if }i\sim j \\
G(e_i\otimes e_j) \ \ \ \ \text{ if }i\not\sim j \\
\end{array}
\right\rbrace= e_j\otimes e_i$$
hence $S=H\in C_{A(C_{13}(3,4))}$ and the graph $C_{13}(3,4)$ has no quantum symmetry by corollary \ref{CorS}.
\end{itemize}

 \end{proof}

This result gives us also the complete study of quantum symmetries of vertex-transitive graphs of order $13$:

\begin{center}
\begin{tabular}{   | l| l | l | }
 \hline                 
    $X$ & $\mathrm{Aut}(X)$ &  $\mathbb{G}_X$  \tabularnewline \hline   
 $K_{13}$ & $\mathbb{S}_{13}$ & $S_{13}^+$ \tabularnewline \hline
 $C_{13}$ & $\mathcal{D}_{13}$ & $\mathcal{D}_{13}$ \tabularnewline \hline
 $C_{13}(2)$ & $\mathcal{D}_{13}$ & $\mathcal{D}_{13}$ \tabularnewline \hline
 $C_{13}(2,5)$ & $\mathcal{D}_{13}$ & $\mathcal{D}_{13}$ \tabularnewline \hline
 $C_{13}(2,6)$ & $\mathcal{D}_{13}$ & $\mathcal{D}_{13}$ \tabularnewline \hline
 $C_{13}(3)$ & $\mathcal{D}_{13}$ & $\mathcal{D}_{13}$ \tabularnewline \hline
 $C_{13}(5)$ & $G_1:=\Z_{13}\rtimes \Z_{4}$ & $G_1$ \tabularnewline \hline
 $C_{13}(3,4)$ & $G_2:=\Z_{13}\rtimes \Z_{6}$ & $G_2$  \\
 \hline  
 \end{tabular}
  \end{center}\medskip

To complete the work of \cite{BaBi11} we should also study the graphs of order $12$.

\medskip

\section{$C_{17}(2,4,8)$ has no quantum symmetry}

The graph $C_{17}(2,4,8)$ is $\mathcal{B}$-clos (by \ref{pcoh}) but don't satisfy assumptions of theorem  \ref{NoSym2}  Indeed, here are its orbitals:\medskip

 \ \ \ \ \ \ \ \ \ \ \ \ \ \ \ \ \ \ \  \ \ \ \ \ \begin{tabular}{ | c | c | c |  }
 \hline                 
   $i$ & $\mathcal{O}_{i}^{1}$ &  $\mathcal{O}_{i}^{2}$  \tabularnewline \hline
$0$ & $\{1,2,4,8,9,13,15,16\}$ & $\{3,5,6,7,10,11,12,14\}$ \tabularnewline \hline
$1$ & $\{2,3,5,9,10,14,16,0\}$ & $\{4,6,7,8,11,12,13,15\}$ \tabularnewline \hline
$2$ & $\{3,4,6,10,11,15,0,1\}$ & $\{5,7,8,9,12,13,14,16\}$ \tabularnewline \hline
$3$ & $\{4,6,7,12,0,2\}$ & $\{5,8,9,10,11,1\}$ \tabularnewline \hline
$4$ & $\{5,7,8,0,1,3\}$ & $\{6,9,10,11,12,2\}$ \tabularnewline \hline
$5$ & $\{6,8,9,1,2,4\}$ & $\{7,10,11,12,0,3\}$ \tabularnewline \hline
$6$ & $\{7,9,10,2,3,5\}$ & $\{8,11,12,0,1,4\}$ \tabularnewline
\hline
$7$ & $\{8,10,11,3,4,6\}$ & $\{9,12,0,1,2,5\}$ \tabularnewline
\hline
$8$ & $\{9,11,12,4,5,7\}$ & $\{10,0,1,2,3,6\}$ \tabularnewline
\hline
$9$ & $\{10,12,0,5,6,8\}$ & $\{11,1,2,3,4,7\}$ \tabularnewline
\hline
$10$ & $\{11,0,1,6,7,9\}$ & $\{12,2,3,4,5,8\}$ \tabularnewline
\hline
$11$ & $\{12,1,2,7,8,10\}$ & $\{0,3,4,5,6,9\}$ \tabularnewline
\hline
$12$ & $\{0,2,3,8,9,11\}$ & $\{1,4,5,6,7,10\}$ \tabularnewline
 \hline
 \end{tabular}\bigskip

As in $C_{13}(3,4)$, the elements of $\mathcal{O}_{i}^{1}$ are the neighbors of $i$ and the elements of $\mathcal{O}_{i}^{2}$ are the vertices non connected to $i$ in $C_{13}(3,4)$. For every two couples $(i,j)$ and $(k,l)$ of vertices of same nature (connected or not) there exists an automorphism $\sigma\in\mathrm{Aut}(C_{17}(2,4,8))$ such that: 
 $$\sigma(i)=k\ \ \text{and}\ \ \sigma(j)=l$$

\begin{defin}We denote $D=M\circ (\id\otimes M)$ the function of $C_{A(C_{17}(2,4,8))}(3,1)$. 

Then for all $i,j,k\in[0,16]$, we have:
 $$D(e_i\otimes e_j\otimes e_k)=\delta_{i,j}\delta_{i,k}e_i$$
 
\end{defin} \medskip

 \begin{lem}\label{248_1} For all $k\in  \mathcal{O}_{0}^{1}\cap \mathcal{O}_{1}^{1}=\{2,9,16\}$, there exists an intertwinner $F_k\in C_{A(C_{17}(2,4,8))}(2,1)$ such that:
 $$F_k(e_0\otimes e_1)=e_k$$
 \end{lem}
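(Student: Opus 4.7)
The plan is to imitate the strategy of Lemma 7.3 (the analogous statement for $C_{13}(3,4)$): we construct several intertwiners in $C_{A(X)}(2,1)$ as compositions of the generators $M$, $M^*$, $T_1$, $T_2$ and $D$, and then take linear combinations so that each of $e_2$, $e_9$, $e_{16}$ can be separately picked out.

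First I would consider the elementary ``products'' $H_{ab} := M\circ (T_a\otimes T_b)\in C_{A(X)}(2,1)$ for $a,b\in\{1,2\}$, whose values on $e_0\otimes e_1$ are read directly from the orbital table. In particular,
\[
H_{11}(e_0\otimes e_1) = e_2+e_9+e_{16},
\]
which already yields one vector supported precisely on the target set $\{2,9,16\}$ (as the sum of its three basis elements).

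To separate the three terms, I would next use the ``sandwich'' construction
\[
F_b^{de} := D\circ (T_1\otimes T_b\otimes T_1)\circ (\id\otimes H_{de}\otimes \id)\circ (M^*\otimes M^*)\in C_{A(X)}(2,1).
\]
A direct unfolding, exactly as in the $C_{13}(3,4)$ computation, gives
\[
F_b^{de}(e_0\otimes e_1) \;=\; \sum_{k\in\mathcal{O}_0^1\cap\mathcal{O}_1^1}\bigl|\mathcal{O}_0^d\cap\mathcal{O}_1^e\cap\mathcal{O}_k^b\bigr|\, e_k,
\]
so that each such intertwiner produces a vector in $\Vect(e_2,e_9,e_{16})$ whose coordinates are triple-intersection numbers on the orbital scheme.

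The remaining step is pure linear algebra: by varying $(b,d,e)\in\{1,2\}^3$ I obtain enough vectors in the three-dimensional space $\Vect(e_2,e_9,e_{16})$ that three of them are linearly independent, after which one solves for each $e_k$ as an explicit linear combination (in general subtracting off a multiple of $H_{11}$ to kill the uniform part, as was done for $F_7$ and $F_2,F_{12}$ in Lemma 7.3). That this succeeds \emph{in principle} is guaranteed because $\Aut_0(X)\cap \Aut_1(X)$ is trivial: the stabiliser $\Aut_0(X)\simeq E$ acts on $\Z_{17}$ by multiplication, and only the identity element of $E$ fixes $1$. Hence there is no equivariant obstruction to separating the three points of $\mathcal{O}_0^1\cap\mathcal{O}_1^1$, and the $3\times N$ matrix of triple-intersection counts must have rank $3$. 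The main obstacle is purely computational: the orbits here have size $8$ rather than $6$ as in $C_{13}(3,4)$, so the tabulation of intersection numbers is heavier, but the mathematical structure is identical.
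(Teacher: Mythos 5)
There is a genuine gap, and it is exactly at the step you wave away as ``pure linear algebra.'' Your formula
$F_b^{de}(e_0\otimes e_1)=\sum_{k\in\mathcal{O}_0^1\cap\mathcal{O}_1^1}\bigl|\mathcal{O}_0^d\cap\mathcal{O}_1^e\cap\mathcal{O}_k^b\bigr|\,e_k$ is correct, but if you actually compute these triple-intersection numbers for $C_{17}(2,4,8)$ you find that for \emph{every} choice of $(b,d,e)\in\{1,2\}^3$ the count is constant in $k$. For instance, with $(d,e)=(1,1)$ the middle set is $\{2,9,16\}$, and since the pairwise differences $\pm7,\pm14$ all lie in $3E$ and none lie in $E$, one gets counts $(0,0,0)$ for $b=1$ and $(2,2,2)$ for $b=2$; with $(d,e)=(1,2)$ the middle set is $\{4,8,13,15\}$ and the count is $2$ for each $k\in\{2,9,16\}$ and each $b$; the cases $(2,1)$ and $(2,2)$ behave the same way. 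So every vector your family produces is a scalar multiple of $e_2+e_9+e_{16}$: the matrix of triple-intersection counts has rank $1$, not $3$, and no linear combination of your $F_b^{de}$ and $H_{11}$ can isolate a single $e_k$. The appeal to the triviality of $\Aut_0(X)\cap\Aut_1(X)$ only shows there is no \emph{a priori} symmetry forcing all intertwiners in $C_{A(X)}(2,1)$ to be symmetric in $2,9,16$; it says nothing about whether your particular finite family separates them, and here it demonstrably does not.

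The paper's proof needs a genuinely deeper composition to break this symmetry. It first forms $H=T_1\circ H_{1,1}$, whose value $3(e_0+e_1)+2(e_3+e_7+e_8+e_{10}+e_{11}+e_{15})+(e_4+e_5+e_6+e_{12}+e_{13}+e_{14})$ carries non-uniform multiplicities; sandwiching $H$ (rather than an $H_{de}$) between $T_i\otimes\cdot\otimes T_j$ and subtracting $H_{i,j}$ produces vectors such as $e_3+e_{10}$ supported asymmetrically; applying $T_1$ to these brings the support back to $\{2,9,16\}$ but now with coefficients $2e_2+e_9+e_{16}$ (plus terms outside the target set); and only then does a second sandwich, followed by subtracting $H_{1,1}$, isolate $e_2$ (and similarly $e_9$, $e_{16}$). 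In short, a single level of the sandwich construction is provably insufficient here, and your proposal would need to be replaced by this two-stage bootstrap (or some other scheme producing non-constant coefficients on $\{2,9,16\}$) to prove the lemma.
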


 \begin{proof} For all $i,j\in\{0,1\}$, we define: $H_{i,j}=M\circ (T_i\otimes T_j)\in C_{A(C_{17}(2,4,8))}(2,1)$ then we have:
 $$H_{1,1}(e_0\otimes e_1)=e_2+e_9+e_{16},\ \ \ \ \ \ H_{1,2}(e_0\otimes e_1)=e_4+e_8+e_{13}+e_{15}$$
  $$H_{2,1}(e_0\otimes e_1)=e_3+e_5+e_{10}+e_{14},\ \ \ \ \ \ H_{2,2}(e_0\otimes e_1)=e_6+e_7+e_{11}+e_{12}$$
Then for $H=T_1\circ H_{1,1}$, we have:
$$H(e_0\otimes e_1)=3(e_0+e_1)+2(e_8+e_{15}+e_3+e_{10}+e_7+e_{11})+(e_4+e_{13}+e_5+e_{14}+e_6+e_{12})$$
 
 Now we can see that if $S\in C_{A(C_{17}(2,4,8))}(2,1)$ then $\left((T_i\otimes S\otimes T_j)\circ (M^*\circ M^*)\right)(e_0\otimes e_1)$ gives us only the $"e_k"$ of $S(e_0\otimes e_1)$ that are also in $H_{i,j}(e_0\otimes e_1)$. For example:
 
 $$G_2:=D\circ (T_2\otimes H\otimes T_1)\circ (M^*\circ M^*):e_0\otimes e_1\mapsto 2(e_3+e_{10})+e_5+e_{14}$$
  $$G_9:=D\circ (T_2\otimes H\otimes T_2)\circ (M^*\circ M^*):e_0\otimes e_1\mapsto 2(e_7+e_{11})+e_6+e_{12}$$
 $$G_{16}:=D\circ (T_1\otimes H\otimes T_2)\circ (M^*\circ M^*):e_0\otimes e_1\mapsto 2(e_8+e_{15})+e_4+e_{13}$$

  Then we define: 
  $$S_2=T_1\circ (G_2-H_{2,1}),\ \ \ S_9=T_1\circ (G_9-H_{2,2}),\ \ \ S_{16}=T_1\circ (G_{16}-H_{1,2})$$
  and we have:
$$S_2(e_0\otimes e_1)=T_1\left(e_3+e_{10}\right)=2e_2+e_9+e_{16}+\dots \text{"other }e_k"\dots $$ 
$$S_9(e_0\otimes e_1)=T_1\left(e_3+e_{10}\right)=e_2+2e_9+e_{16}+\dots \text{"other }e_k"\dots $$ 
$$S_2(e_0\otimes e_1)=T_1\left(e_3+e_{10}\right)=e_2+e_9+2e_{16}+\dots \text{"other }e_k"\dots $$

  To finsih the proof we define this following intertwiners of $C_{A(C_{17}(2,4,8))}(2,1)$:
  $$F_2=D\circ (T_1\otimes S_2\otimes T_1)\circ (M^*\circ M^*)-H_{1,1}$$
  $$F_9=D\circ (T_1\otimes S_9\otimes T_1)\circ (M^*\circ M^*)-H_{1,1}$$
  $$F_2=D\circ (T_1\otimes S_{16}\otimes T_1)\circ (M^*\circ M^*)-H_{1,1}$$
 to obtain:
$$F_2(e_0\otimes e_1)=(2e_2+e_9+e_{16})-(e_2+e_9+e_{16})=e_2$$
$$F_9(e_0\otimes e_1)=(e_2+2e_9+e_{16})-(e_2+e_9+e_{16})=e_9$$
$$F_{16}(e_0\otimes e_1)=(e_2+e_9+2e_{16})-(e_2+e_9+e_{16})=e_{16}$$
 \end{proof}

 \begin{theorem} The graphe $C_{17}(2,4,8)$ has no quantum symmetry.
 \end{theorem}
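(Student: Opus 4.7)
The plan is to mimic the proof for $C_{13}(3,4)$ given in the previous section, constructing an intertwinner in $C_{A(C_{17}(2,4,8))}(2,2)$ that coincides with the flip $S$, and then invoking Corollary \ref{CorS}. The whole argument rests on building enough $F_k \in C_{A(C_{17}(2,4,8))}(2,1)$ satisfying $F_k(e_0 \otimes e_1) = e_k$ (and analogously on a non-edge $(0, j_0)$) so that edge-transitivity and non-edge-transitivity of $C_{17}(2,4,8)$ let one assemble the flip piecewise.

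First, I would extend Lemma \ref{248_1} to every $k \in [0,16]$. The cases $k=0,1$ are handled by $M \circ (\id \otimes (U \circ U^*))$ and $M \circ ((U \circ U^*) \otimes \id)$ as in the $C_{13}(3,4)$ proof. For $k$ in $\mathcal{O}_0^1 \cap \mathcal{O}_1^2$, $\mathcal{O}_0^2 \cap \mathcal{O}_1^1$, and $\mathcal{O}_0^2 \cap \mathcal{O}_1^2$, I would use the same recipe as in Lemma \ref{248_1}: start from $H_{i,j} = M \circ (T_i \otimes T_j)$ applied to $e_0 \otimes e_1$, pass through $T_1$ to spread the mass, and then extract a single orbit by composing $D \circ (T_a \otimes (\cdot) \otimes T_b) \circ (M^* \otimes M^*)$ with appropriate $a,b \in \{1,2\}$, subtracting off the $H_{a,b}(e_0 \otimes e_1)$ contribution and iterating until every basis vector $e_k$ is isolated. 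By Lemma \ref{248_1} this is already done for $k \in \{2,9,16\} = \mathcal{O}_0^1 \cap \mathcal{O}_1^1$, and the remaining three orbit-intersections are treated by permuting the roles of $T_1$ and $T_2$.

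Second, exactly as in Section 7 I would use that $C_{17}(2,4,8)$ is edge-transitive \emph{and} non-edge-transitive (since its adjacency relation has only two orbits under $\mathrm{Aut}(X)$ on ordered pairs of distinct vertices) to promote the $F_k$ into two flip operators $F, G \in C_{A(C_{17}(2,4,8))}(2,2)$ such that $F(e_i \otimes e_j) = e_j \otimes e_i$ whenever $i \sim j$, and $G(e_i \otimes e_j) = e_j \otimes e_i$ whenever $i \not\sim j$ and $i \neq j$. The construction is the one from the $C_{13}(3,4)$ theorem: pick an intermediate vertex adjacent (respectively non-adjacent) to both $0$ and $1$ (resp.\ to both $0$ and $j_0$ for a chosen non-neighbor $j_0$ of $0$), use automorphisms transporting $(0,1)$ onto $(i,j)$, and sandwich the appropriate $F_k$'s via $M^*$'s. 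Then define, with the projectors $g_s = (\id \otimes M) \circ (\id \otimes T_s \otimes \id) \circ (M^* \otimes \id) \in C_{A(C_{17}(2,4,8))}(2,2)$ for $s \in \{0,1,2\}$,
\[ H := g_0 + F \circ g_1 + G \circ g_2. \]
A direct computation shows $H(e_i \otimes e_j) = e_j \otimes e_i$ on all basis vectors, hence $S = H \in C_{A(C_{17}(2,4,8))}$ and Corollary \ref{CorS} concludes.

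The main obstacle is the first step. Each orbit intersection $\mathcal{O}_0^a \cap \mathcal{O}_1^b$ has size $3$ or $4$, never $1$ (this is precisely why Theorem \ref{NoSym2} does not apply), so a single $D$-sandwich does not yield a canonical basis vector but only an integer combination of several $e_k$'s. The technique of Lemma \ref{248_1} — multiplying by $T_1$ to produce different multiplicities inside a common orbit and then solving a small linear system to isolate individual basis vectors — must be carried out in each of the four orbit intersections. This is bookkeeping rather than conceptual difficulty, but it is where the proof becomes case-by-case and where a miscalculation would most easily occur; all the rest (the flip assemblies $F, G$ and the final $H$) is formally identical to the $C_{13}(3,4)$ argument.
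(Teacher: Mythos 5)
Your overall strategy (build $(2,1)$-intertwinners isolating basis vectors on an edge, assemble a piecewise flip, invoke Corollary \ref{CorS}) is the right one, but your plan both overshoots and undershoots relative to what the paper actually does. The paper never extends Lemma \ref{248_1} to all $k\in[0,16]$: it only needs the three operators $F_k$ with $k\in\mathcal{O}_0^1\cap\mathcal{O}_1^1=\{2,9,16\}$, because for any edge $(0,2)$ with common neighbour $1$ the image $\sigma(\{2,9,16\})=\mathcal{O}_0^1\cap\mathcal{O}_2^1$ automatically contains $1$, so some $F_x$ already sends $e_0\otimes e_2$ to $e_1$; this is all that is required to build the edge-flip $L$ and then $L'=L\circ g_1$. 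More importantly, the paper disposes of the non-edge case with no further computation at all: $C_{17}(2,4,8)$ is self-complementary (the complement is $C_{17}(3E)$ with $3\in\Z_{17}^*$, hence isomorphic to $X$), and since $C_{A(X^c)}=C_{A(X)}$ the non-edge flip $L''$ is obtained for free from the edge flip of the complement. Your proposal instead calls for a second full construction on a non-edge pair, which is unnecessary.

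The genuine gap in your write-up is that the step you dismiss as ``bookkeeping'' is precisely the mathematical content that must be verified and that could fail. The trick of Lemma \ref{248_1} works because $T_1\circ H_{1,1}(e_0\otimes e_1)$ happens to assign \emph{distinct} multiplicities to the relevant vertices inside each orbit intersection, so that subtracting a copy of $H_{a,b}(e_0\otimes e_1)$ leaves a single basis vector; there is no a priori reason this succeeds in every one of the four orbit intersections (of sizes $3$ and $4$), and you have not checked it for any of them beyond citing the $\{2,9,16\}$ case. Since your assembly of $F$ and $G$ depends on having \emph{all} seventeen $F_k$'s on the edge and a full analogous family on a non-edge, the proof as proposed is not complete. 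The paper's self-complementarity observation is exactly the idea that lets one avoid having to carry out (and risk failure of) that extra isolation work.
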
\medskip

  \begin{proof} As for $C_{13}(3,4)$  we check that for any \underline{connected} vertices $i$ and $j$, as $0$ and $1$ are connected there exists $\sigma\in\mathrm{Aut}(C_{17}(2,4,8))$ such that: $\sigma(0)=i$ and $\sigma(1)=j$. Hence, for all  $k\in\{2,9,16\}$:
$$F_k(e_i\otimes e_j)=F_k(e_{\sigma(0)}\otimes e_{\sigma(1)})=P_{\sigma}\circ F_k(e_0\otimes e_1)=e_{\sigma(k)}\ \ \ \ (*)$$

We recall that $\{2,9,10\}=\mathcal{O}_{0}^{1}\cap \mathcal{O}_{1}^{1}$. so
$$\sigma\left(\{2,9,10\}\right)=\mathcal{O}_{\sigma(0)}^{1}\cap \mathcal{O}_{\sigma(1)}^{1}=\mathcal{O}_{i}^{1}\cap \mathcal{O}_{j}^{1}$$ 

by proposition \ref{D1}.

The vertices $0$ and $2$ are connected and $1\in \mathcal{O}_{0}^{1}\cap \mathcal{O}_{2}^{1}$ so there exists $x\in\{2,9,16\}$ such that $F_x(e_0\otimes e_2)=e_1$. The same way, as $1$ and $2$ are connected and $0\in \mathcal{O}_{1}^{1}\cap \mathcal{O}_{2}^{1}$, there exists $y\in\{2,9,16\}$ such that $F_y(e_2\otimes e_1)=e_0$

Then we define: 
$$L=(F_{x}\otimes F_{y})\circ (\id\otimes (M^*\circ F_2)\otimes \id)\circ (M^*\otimes M^*)\in C_{A(C_{17}(2,4,8))}(2,2)$$
So we have:
\begin{eqnarray*}
L(e_0\otimes e_1)&=&(F_{x}\otimes F_{y})\circ (\id\otimes (M^*\circ F_2)\otimes \id)(e_0\otimes e_0\otimes e_1\otimes  e_1)\\
&=&(F_{x}\otimes F_{y})(e_0\otimes e_2\otimes e_2 \otimes  e_1)=F_x(e_0\otimes e_2)\otimes F_y(e_2\otimes e_1)=e_1\otimes e_0
\end{eqnarray*}

Then if $i$ and $j$ are connected, with $\sigma$ such that $\sigma(0)=i$ and $\sigma(1)=j$ we have:
$$L(e_i\otimes e_j)=L(e_{\sigma(0)}\otimes e_{\sigma(1)})=P_{\sigma}^{\otimes 2}\circ L(e_0\otimes e_1)=e_{\sigma(1)}\otimes e_{\sigma(0)}=e_j\otimes e_i$$

As for $C_{13}(3,4)$, for $s\in\{0,1,2\}$ we define:
\begin{center}
\fbox{$g_s:=(\id\otimes M)\circ(\id\otimes T_s\otimes\id)\circ(M^*\otimes \id)\in C_{A(C_{17}(2,4,8))}(2,2)$}
\end{center}
This $g_s$ are as follows:
$$g_0(e_i\otimes e_j)=\mathds{1}_{i=j}(e_i\otimes e_i),\ \  g_1(e_i\otimes e_j)=\mathds{1}_{i\sim j}(e_i\otimes e_j),\ \ g_0(e_i\otimes e_j)=\mathds{1}_{i\not\sim j}(e_i\otimes e_j)$$

Then  for $L'=L\circ g_1$ we have:
$$L'(e_i\otimes e_j)=\left\lbrace\begin{array}{l}
L(e_i\otimes e_j) \ \ \ \ \text{ if }i\sim j \\
\ \ \ \ \ \ 0 \ \ \ \ \ \ \ \ \ \ \text{ if }i\not\sim j \\
\end{array}
\right\rbrace= \left\lbrace\begin{array}{l}
e_j\otimes e_i \ \ \ \ \text{ if }i\sim j \\
\ \ \ \ 0  \ \ \ \ \ \ \ \text{ if }i\not\sim j \\
\end{array}
\right\rbrace$$

We can now use the fact that $C_{17}(2,4,8)$ is self adjoint, so as $C_{A((C_{17}(2,4,8))^c)}(2,2)=C_{A(C_{17}(2,4,8))}(2,2)$ there exists $L''\in C_{A(C_{17}(2,4,8))}(2,2)$ such that:
$$L''(e_i\otimes e_j)= \left\lbrace\begin{array}{l}
e_j\otimes e_i \ \ \ \ \text{ if }i\not\sim j\ \text{and} \ i\ne j  \\
\ \ \ \ 0  \ \ \ \ \ \ \ \text{ if }i\sim j \text{ or }i=j \\
\end{array}
\right\rbrace$$

To finish the proof we use the last intertwinner:
\begin{center}
\fbox{$G:=g_0+L'+L''\in C_{A(C_{17}(2,4,8))}\left( 2,2   \right) $}
\end{center}
We have, $G(e_i\otimes e_j)= e_j\otimes e_i$, hence $S=G\in C_{A(C_{17}(2,4,8))}$ and the graph $C_{17}(2,4,8)$ has no quantum symmetry by corollary \ref{CorS}.
 \end{proof}

It should be interested to study the same way $C_{41}(4,10,16,18)$ and $C_{31}(2,4,8,15)$ to check if all the circulant $p$-graphs of order $10$ have no quantum symmetry.

\section{Applications in the general case}\label{PartCasgeneral}

\begin{theorem}\label{NoSymG}Let $X$ be a  $\mathcal{B}$-clos graph, such that\smallskip

For every $s\in[1,r]$, there exist $ j_s\in\mathcal{O}_0^s$, $k_s\in[0,n-1]$ and $t_{s}^1,t_s^2,t_s^3,t_s^4,t_s^5\in[0,r]$ such that
$$\mathcal{O}_0^{t_{s}^1}\cap\mathcal{O}_{j_s}^{t_{s}^2}=\{k_s\},\ \ \ \ \ \mathcal{O}_0^{s}\cap\mathcal{O}_{k_s}^{t_{s}^4}=\{j_s\},\ \ \ \ \ \mathcal{O}_{k_s}^{t_s^5}\cap\mathcal{O}_{j_s}^{t_{s}^3}=\{0\}$$
Then $X$ has no quantum symmetry.
\end{theorem}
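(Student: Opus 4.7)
The plan is to reproduce the scheme of the proof of Proposition \ref{NoSym}: build the flip $S \in C(2,2)$ as $F := \sum_{s=0}^r f_s \circ g_s$ with every factor living in $C_{A(X)}$, and then conclude by Corollary \ref{CorS}. The detector $g_s := (\id \otimes M) \circ (\id \otimes T_s \otimes \id) \circ (M^* \otimes \id)$ satisfies $g_s(e_i \otimes e_j) = \delta_{j \in \mathcal{O}_i^s}(e_i \otimes e_j)$ and lives in $C_{A(X)}(2,2)$ because the $\mathcal{B}$-clos hypothesis guarantees each $T_s \in C_{A(X)}(1,1)$; in particular every operation used below is a genuine intertwiner.

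The heart of the argument is the construction, for each $s \in [0,r]$, of an $f_s \in C_{A(X)}(2,2)$ realizing the flip on the ordered-pair orbit $\mathcal{O}^s$, i.e.\ with $f_s(e_i \otimes e_j) = e_j \otimes e_i$ whenever $j \in \mathcal{O}_i^s$. For $s = 0$ one takes $f_0 = \id$. For $s \geq 1$ we propose
$$f_s := (M \otimes M) \circ (\id \otimes T_{t_s^4} \otimes T_{t_s^5} \otimes \id) \circ (\id \otimes (M^* \circ M) \otimes \id) \circ (T_s \otimes T_{t_s^1} \otimes T_{t_s^2} \otimes T_{t_s^3}) \circ (M^* \otimes M^*).$$
The intuition is that the inner $M^*\circ M$ sandwich uses $t_s^1, t_s^2$ to "locate" an auxiliary vertex $k$ lying in $\mathcal{O}_i^{t_s^1} \cap \mathcal{O}_j^{t_s^2}$, while the outer two multiplications then pair $T_s$ with $T_{t_s^4}$ on the left and $T_{t_s^3}$ with $T_{t_s^5}$ on the right to recover $j$ in the first slot and $i$ in the second.

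To verify the defining property of $f_s$, fix $(i,j)$ with $j \in \mathcal{O}_i^s$. By the orbit-of-pairs description (Proposition \ref{alternative}), $(i,j)$ and $(0, j_s)$ sit in the same $\Aut(X)$-orbit, so one picks $\sigma \in \Aut(X)$ with $\sigma(0) = i$ and $\sigma(j_s) = j$ and sets $k := \sigma(k_s)$. Applying Proposition \ref{D1}(iii) to the three hypothesized singleton intersections transports them to
$$\mathcal{O}_i^{t_s^1} \cap \mathcal{O}_j^{t_s^2} = \{k\}, \qquad \mathcal{O}_i^s \cap \mathcal{O}_k^{t_s^4} = \{j\}, \qquad \mathcal{O}_k^{t_s^5} \cap \mathcal{O}_j^{t_s^3} = \{i\}.$$
A direct trace through $f_s$ then shows that the first identity collapses the inner sandwich to $e_k \otimes e_k$, the $T_{t_s^4}$ and $T_{t_s^5}$ fan this out to $\mathcal{O}_k^{t_s^4}$ and $\mathcal{O}_k^{t_s^5}$, and the outer multiplications produce $e_j$ and $e_i$ via the second and third identities, so $f_s(e_i \otimes e_j) = e_j \otimes e_i$.

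Assembling, $F(e_i \otimes e_j) = f_{s(i,j)}(e_i \otimes e_j) = e_j \otimes e_i$, where $s(i,j)$ is the unique index with $j \in \mathcal{O}_i^{s(i,j)}$; hence $S = F \in C_{A(X)}$ and Corollary \ref{CorS} finishes. The main subtlety, and the reason why the statement uses five independent labels $t_s^1, \dots, t_s^5$ rather than the single pair of Proposition \ref{NoSym}, is that Lemma \ref{prepa2} is no longer available: in the circulant $p$-graph setting the stabilizer-based argument let one pair $(t_s^1, t_s^2)$ simultaneously witness all three intersections via the explicit description $\mathcal{O}_i^s = i + y_s E$, but in the general $\mathcal{B}$-clos case no such reuse is automatic and the two "return" identities (encoded by $t_s^3, t_s^4, t_s^5$) must be posited by hand.
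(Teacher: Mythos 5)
Your proposal is correct and follows essentially the same route as the paper: the same operators $g_s$, the same $f_s$ (the paper merely names the outer part $h_s$), the same transport of the three singleton intersections via Proposition \ref{D1}(iii) to a general pair $(i,j)\in\mathcal{O}^s$, and the same assembly $F=\sum_s f_s\circ g_s = S$ concluded by Corollary \ref{CorS}. The only cosmetic difference is that you obtain a single $\sigma$ with $\sigma(0)=i$, $\sigma(j_s)=j$ directly from the orbit-of-pairs description, where the paper composes $\sigma'\circ\sigma$ with $\sigma'\in\Aut_i(X)$.
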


\begin{remark} It is equivalent to replace the assumption  "there exists $j_s\in\mathcal{O}_0^s$"  by "for every $j_s\in\mathcal{O}_0^s$" because if it holds for one $j_s$ then, applying elements of $\Aut_0(X)$ to the third wishes equality we get the same property for every elements of the orbit of $j_s$ under action of $\Aut_0(X)$, which is $\mathcal{O}_0^s$.
\end{remark}

\begin{proof} It's nearly the same demonstration of proposition \ref{NoSym} with some quick changes. We put $t_0^1=t_0^2=t_0^3=t_0^4=t_0^5=0$.
Then we define the following intertwinners of $C_{A(X)}$ for every $s\in[0,r]$.
\begin{eqnarray*}
h_s&:=&\left(M\otimes M\right) \circ \left(\id\otimes T_{t_s^4}\otimes T_{t_s^5}\otimes \id\right)\in C_{A(X)}(4,2)\\
f_{s}&:=&h_s\circ \left(\id\otimes (M^*\circ M)\otimes\id\right)\circ \left(T_s\otimes T_{t_s^1}\otimes T_{t_s^2}\otimes T_{t_s^3}\right)  \circ \left(M^*\otimes M^*\right) \in C_{A(X)}\left( 2,2   \right)\\
g_s&:=&(\id\otimes M)\circ(\id\otimes T_s\otimes\id)\circ(M^*\otimes \id)\in C_{A(X)}(2,2)\\
F&:=&\sum_{s=0}^r{f_s\circ g_s}\in C_{A(X)}\left( 2,2   \right) 
\end{eqnarray*}

We check that $f_0=\id_{(\C^n)^{\otimes 2}}$, then for  $s\in[1,r]$, we have
$$\mathcal{O}_{0}^{t_s^1}\cap\mathcal{O}_{j_s}^{t_s^2}=\{k_s\}$$

We now consider two vertices $i$ and $j$ such that $j\in\mathcal{O}_{i}^{s}$. Let $\sigma\in\Aut(X)$ such that $\sigma(0)=i$. We know that $j_s\in\mathcal{O}_{0}^{s}$ hence $\sigma(j_s)\in\mathcal{O}_{i}^{s}$. So there exists $\sigma'\in\Aut_i(X)$ such that $\sigma'(\sigma(j_s))=j$.  Hence,  using $(iii)$ of proposition \ref{D1} we have:
$$\{\sigma'\sigma(k_s)\}=\sigma'\sigma(\mathcal{O}_{0}^{t_s^1}\cap\mathcal{O}_{j_s}^{t_s^2})=
\mathcal{O}_{\sigma'\sigma(0)}^{t_s^1}\cap\mathcal{O}_{\sigma'\sigma(j_s)}^{t_s^2}=
\mathcal{O}_{i}^{t_s^1}\cap\mathcal{O}_{j}^{t_s^2}\ \ \ \ \ \ \ \ \ \ \ \ \ (1.1)$$

Then we also obtain
$$\{i\}=\sigma'\sigma(0)=\sigma'\sigma(\mathcal{O}_{k_s}^{t_s^5}\cap\mathcal{O}_{j_s}^{t_s^3})=\mathcal{O}_{\sigma'\sigma(k_s)}^{t_s^5}\cap\mathcal{O}_{j}^{t_s^3}\ \ \ \ \ \ \ \ \ \ \ \ \ (1.2)$$
$$\{j\}=\sigma'\sigma(j_s)=\sigma'\sigma(\mathcal{O}_{0}^{s}\cap\mathcal{O}_{k_s}^{t_s^4})=\mathcal{O}_{i}^{s}\cap\mathcal{O}_{\sigma'\sigma(k_s)}^{t_s^4}\ \ \ \ \ \ \ \ \ \ \ \ \ (1.2)$$

Which allow us to do this following computation for every $i,j\in[0,p-1]$ such that $j\in\mathcal{O}_{i}^{s}$:
\begin{eqnarray*}
f_s(e_i\otimes e_j)&=& h_s\circ\left(\id\otimes (M^*\circ M)\otimes\id\right)\left(  T_s(e_i)\otimes T_{t_s^1}(e_i)\otimes T_{t_s^2}(e_j)\otimes T_{t_s^3}(e_j) \right) \\
&=& h_s\left( T_s(e_i)\otimes   \sum_{k\in\mathcal{O}_{i}^{t_s^1}\cap\mathcal{O}_{j}^{t_s^2}}{(e_k\otimes e_k)}              \otimes T_{t_s^3}(e_j)            \right)  \\ 
&=&h_s \left(  T_s(e_i)\otimes   e_{\sigma'\sigma(k_s)}\otimes e_{\sigma'\sigma(k_s)}             \otimes T_{t_s^3}(e_j)     \right)\ \ \ \ \ \ \ \ \ \ \ \ \ \ \ \ \ \ \ \text{by (1.1)}\\ 
&=&\left(M\otimes M\right)  \left(  T_s(e_i)\otimes   T_{t_s^4}(e_{\sigma'\sigma(k_s)})\otimes T_{t_s^5}(e_{\sigma'\sigma(k_s)})           \otimes T_{t_s^3}(e_j)     \right)\\
&=&  \left(\sum_{a\in\mathcal{O}_{i}^{s}\cap\mathcal{O}_{\sigma'\sigma(k_s)}^{t_s^4}}{e_a} \right) \otimes \left(\sum_{b\in\mathcal{O}_{\sigma'\sigma(k_s)}^{t_s^5}\cap\mathcal{O}_{j}^{t_s^3}}{e_b} \right) \\ 
&=& e_j\otimes e_i \ \ \ \ \ \ \ \ \ \ \ \ \ \ \ \ \ \ \ \ \ \ \ \ \ \ \ \ \ \ \ \ \ \ \ \ \ \ \ \ \ \ \ \ \ \ \ \  \ \ \ \ \ \ \ \text{by (1.2) et (1.3)}\\
\end{eqnarray*}

The end of the proof is exactly the same of proposition \ref{NoSym}, we have $S=F\in C_{A(X)}$.
 \end{proof}

\underline{Application to the graph $\mathrm{Pr}(C_6)$}:

 Here is a particular application to the theorem  \ref{NoSymG} in a case non treated with method of Banica and Bichon in \cite{BaBi11}: $X=\mathrm{Pr}(C_6)=K_2\square C_6$.

 $$\begin{tikzpicture}
  [scale=.8,auto=left,every node/.style={circle,fill=blue!20}]
  \node (n1) at (0,3.6) {0};
  \node (n2) at (0,2) {1};
  \node (n3) at (3,2) {2};
  \node (n4) at (1.6,1)  {3};
  \node (n5) at (3,-2)  {4};
  \node (n6) at (1.6,-1) {5};
  \node (n7) at (0,-3.6) {6};
  \node (n8) at (0,-2)  {7};
  \node (n9) at (-3,-2)  {8};
  \node (n10) at (-1.6,-1)  {9};
  \node (n11) at (-3,2)  {10};
  \node (n12) at (-1.6,1)  {11};

 \foreach \from/\to in {n2/n4,n6/n4,n6/n8,n8/n10,n10/n12,n12/n2,n1/n3,n3/n5,n5/n7,n7/n9,n9/n11,n11/n1,n1/n2,n7/n8,n11/n12,n9/n10,n5/n6,n3/n4}
    \draw (\from) -- (\to);

\end{tikzpicture}$$

 First remark that $\mathrm{Pr}(C_6)$ is $\mathcal{B}$-clos. Orbitals are as follow:\medskip
 
 \ \ \ \ \ \ \begin{tabular}{ | c | c | c | c | c | c | c | c |}
 \hline                 
   $i$ & $\mathcal{O}_{i}^{1}$ &  $\mathcal{O}_{i}^{2}$ & $\mathcal{O}_{i}^{3}$ & $\mathcal{O}_{i}^{4}$ & $\mathcal{O}_{i}^{5}$ & $\mathcal{O}_{i}^{6}$ & $\mathcal{O}_{i}^{7}$ \tabularnewline \hline
$0$ & $\{1\}$ & $\{2,10\}$ & $\{3,11\}$ & $\{5,9\}$ & $\{4,8\}$ & $\{6\}$ & $\{7\}$\tabularnewline \hline
   $1$ & $\{0\}$ & $\{3,11\}$ & $\{2,10\}$ & $\{4,8\}$ & $\{5,9\}$ & $\{7\}$ & $\{6\}$\tabularnewline \hline
   $2$ & $\{3\}$ & $\{0,4\}$ & $\{1,5\}$ & $\{7,11\}$ & $\{6,10\}$ & $\{8\}$ & $\{9\}$\tabularnewline \hline
   $3$ & $\{2\}$ & $\{1,5\}$ & $\{0,4\}$ & $\{6,10\}$ & $\{7,11\}$ & $\{9\}$ & $\{8\}$\tabularnewline \hline
   $4$ & $\{5\}$ & $\{2,6\}$ & $\{3,7\}$ & $\{1,9\}$ & $\{0,8\}$ & $\{10\}$ & $\{11\}$\tabularnewline \hline
   $5$ & $\{4\}$ & $\{3,7\}$ & $\{2,6\}$ & $\{0,8\}$ & $\{1,9\}$ & $\{11\}$ & $\{10\}$\tabularnewline \hline
   $6$ & $\{7\}$ & $\{4,8\}$ & $\{5,9\}$ & $\{3,11\}$ & $\{2,10\}$ & $\{0\}$ & $\{1\}$\tabularnewline \hline
   $7$ & $\{6\}$ & $\{5,9\}$ & $\{4,8\}$ & $\{2,10\}$ & $\{3,11\}$ & $\{1\}$ & $\{0\}$\tabularnewline \hline
   $8$ & $\{9\}$ & $\{6,10\}$ & $\{7,11\}$ & $\{1,5\}$ & $\{0,4\}$ & $\{2\}$ & $\{3\}$\tabularnewline \hline
   $9$ & $\{8\}$ & $\{7,11\}$ & $\{6,10\}$ & $\{0,4\}$ & $\{1,5\}$ & $\{3\}$ & $\{2\}$\tabularnewline \hline
   $10$ & $\{11\}$ & $\{0,8\}$ & $\{1,9\}$ & $\{3,7\}$ & $\{2,6\}$ & $\{4\}$ & $\{5\}$\tabularnewline \hline
   $11$ & $\{10\}$ & $\{1,9\}$ & $\{0,8\}$ & $\{2,6\}$ & $\{3,7\}$ & $\{5\}$ & $\{4\}$ \\
 \hline  
 \end{tabular}\medskip

 The following  board show, for every value of $s\in[1,7]$, the existence of the $t_s^i$ such that assumptions of theorem \ref{NoSymG} be satisfied.\medskip

 \ \ \ \ \ \ \begin{tabular}{ | c | c | c | c | c |}
 \hline                
   $s$ & $j_s$ & $\mathcal{O}_{0}^{t_s^1}\cap\mathcal{O}_{j_s}^{t_s^2}=\{k_s\}$   & $\mathcal{O}_{0}^{s}\cap\mathcal{O}_{k_s}^{t_s^4}=\{j_s\}$ & $\mathcal{O}_{k_s}^{t_s^5}\cap\mathcal{O}_{j_s}^{t_s^3}=\{0\}$  \tabularnewline \hline
 
$1$ & $\{1\}$ & $\mathcal{O}_{0}^{1}\cap\mathcal{O}_{1}^{0}=\{1\}$ & $\mathcal{O}_{0}^{1}\cap\mathcal{O}_{1}^{0}=\{1\}$ & $\mathcal{O}_{1}^{1}\cap\mathcal{O}_{1}^{1}=\{0\}$ \tabularnewline \hline
   
$2$ & $\{2\}$ & $\mathcal{O}_{0}^{2}\cap\mathcal{O}_{2}^{5}=\{10\}$ & $\mathcal{O}_{0}^{2}\cap\mathcal{O}_{10}^{5}=\{2\}$ & $\mathcal{O}_{2}^{2}\cap\mathcal{O}_{10}^{2}=\{0\}$ \tabularnewline \hline
   
$3$ & $\{3\}$ & $\mathcal{O}_{0}^{3}\cap\mathcal{O}_{3}^{5}=\{11\}$ & $\mathcal{O}_{0}^{3}\cap\mathcal{O}_{11}^{5}=\{3\}$ & $\mathcal{O}_{3}^{3}\cap\mathcal{O}_{11}^{3}=\{0\}$ \tabularnewline \hline
   
$4$ & $\{5\}$ & $\mathcal{O}_{0}^{4}\cap\mathcal{O}_{5}^{5}=\{9\}$ & $\mathcal{O}_{0}^{4}\cap\mathcal{O}_{9}^{5}=\{5\}$ & $\mathcal{O}_{5}^{4}\cap\mathcal{O}_{9}^{4}=\{0\}$ \tabularnewline \hline
   
$5$ & $\{4\}$ & $\mathcal{O}_{0}^{5}\cap\mathcal{O}_{4}^{5}=\{8\}$ & $\mathcal{O}_{0}^{5}\cap\mathcal{O}_{8}^{5}=\{4\}$ & $\mathcal{O}_{4}^{5}\cap\mathcal{O}_{8}^{5}=\{0\}$ \tabularnewline \hline
   
$6$ & $\{6\}$ & $\mathcal{O}_{0}^{6}\cap\mathcal{O}_{6}^{0}=\{6\}$ & $\mathcal{O}_{0}^{6}\cap\mathcal{O}_{6}^{0}=\{6\}$ & $\mathcal{O}_{6}^{6}\cap\mathcal{O}_{6}^{6}=\{0\}$ \tabularnewline \hline
   
$7$ & $\{7\}$ & $\mathcal{O}_{0}^{7}\cap\mathcal{O}_{7}^{0}=\{7\}$ & $\mathcal{O}_{0}^{7}\cap\mathcal{O}_{7}^{0}=\{7\}$ & $\mathcal{O}_{7}^{7}\cap\mathcal{O}_{7}^{7}=\{0\}$ \\
 \hline  
 \end{tabular}
 \medskip

It does proves that  $\mathrm{Pr}(C_6)$ has no quantum symmetry by theorem \ref{NoSymG}.

\medskip

\medskip

\end{document}